\newcommand{\bb}[1]{\mathbf{#1}}
\theoremstyle{plain}
\newtheorem{Proposition}{Proposition}
\newtheorem{Theorem}{Theorem}
\newtheorem{Remark}{Remark}
\newtheorem{Lemma}{Lemma}
\newtheorem{Definition}{Definition}
\newcommand{\cqd}{\hfill \rule{2mm}{2mm}}
\title{{\bf\Large On stability properties of the Cubic-Quintic Schr\"odinger equation with $\delta$-point interaction}}
\author{{\bf\large Jaime Angulo Pava}\footnote{Email: angulo@ime.usp.br, Tel. (55)-11-3091-6265, Fax: (55)-11-3091-6183}\hspace{2mm}
{\bf\large}\vspace{1mm}\\
{\it\small Department of Mathematics, IME-USP}\\
 {\it\small Rua do Mat\~ao 1010, Cidade Universit\'aria,}\\
{\it\small  CEP 05508-090, S\~ao Paulo, SP, Brazil}
\vspace{3mm}\\
{\bf\large C\'esar A. Hern\'andez Melo}\footnote{Email:  cahmelo@uem.br, Tel. (55)-44-3011-5358, Fax: (55)-11-3011-3873}\hspace{2mm}
{\bf\large}\vspace{1mm}\\
{\it\small Department of Mathematics, DMA-UEM}\\
{\it\small Av. Colombo, 5790 Jd. Universit\'ario,}\\
{\it\small  CEP 87020-900, Maring\'a, PR, Brazil}\vspace{3mm}}
\date{May 13, 2018}
\begin{document}
\maketitle
\begin{abstract}
 We study analytically and numerically  the existence and orbital stability of the peak-standing-wave solutions for
 the  cubic-quintic nonlinear Schr\"odinger equation with a point interaction determined by the delta of Dirac. We study the cases  of  attractive-attractive and attractive-repulsive nonlinearities and we recover some results in the literature. Via a perturbation method and continuation argument we determine the Morse index of some specific self-adjoint operators that arise in the stability study. Orbital instability implications from a spectral instability result are established. In the case of an attractive-attractive case and an focusing interaction we give an approach based in the extension theory of symmetric operators for determining the Morse index.
\end{abstract}

\textbf{Mathematics  Subject  Classification (2000)}. Primary
35Q51, 35J61; Secondary 47E05.\\
\textbf{Key  words}. Nonlinear stability of peak-solutions, nonlinear Schr\"odinger equation with point interaction, analytic perturbation theory, self-adjoint extensions.

\section{Introduction}\label{int}

This work addresses the nonlinear stability and instability properties of peak-standing wave associated to the following cubic-quintic nonlinear Schr\"odinger equation with a point interaction determined by Dirac $\delta$ distribution centered  at the origem (henceforth NLSCQ),
\begin{equation}\label{deltasch1}
iu_{t}+u_{xx}+Z\delta(x)u+\lambda_1|u|^2u+\lambda_2 |u|^4u=0,\quad\text{for}\;\;t, x\in \mathbb{R},
\end{equation}
here $u=u(x,t)\in \mathbb C$, $\lambda_i\in \mathbb R$, and  $Z\in \mathbb{R}$ represents the so-called strength parameter.

Equation NLSCQ represents a very general  family of models featuring the competition between  repulsive ($\lambda_1\leqq 0$)/attractive ($\lambda_1\geqq 0$) cubic and repulsive ($\lambda_2\leqq 0$)/attractive ($\lambda_2\geqq 0$)/quintic terms, together with a point interaction or defect determined by the $\delta$ distribution at the origem. That models with $Z=0$, it have drawn considerable attention in the last years by its  physical relevance in optical media, liquid waveguides and others (\cite{BouCLT}, \cite{FilhoABL}, \cite{FilhoAR}, \cite{GiDriMa}). The case $\lambda_2=0$ and $Z\neq 0$ has been also studied substantially in the literature (see \cite{Ada2},  \cite{A1cnoi}, \cite{A2G}, \cite{A5Ponce}, \cite{CMR}, \cite{LeCoz}, \cite{DH}, \cite{FuJe}, \cite{fuoht}, \cite{ghw}, \cite{HMZ1}, \cite{KaOh} and  references therein). The case of Schr\"odinger models on star graphs with $\delta$  conditions on the vertex also have been studied recently in Adami {\it et. al} (\cite{AdaNoj15}, \cite{AdaNoj14} and references therein) and  Angulo\&Goloshchapova (\cite{AG1}, \cite{AG2}, \cite{A2G}). The case of either other defect type  or nonlinearity have been  studied in  \cite{Ada2}, \cite{A4Herna}, \cite{A2G}, \cite{A3G}. The  specific case $\lambda_1=2$,  $\lambda_2=-1$ and $Z>0$ was recently considered by Genoud\&Malomed\&Weishaupl in \cite{GMW}. 

The combination of nonlinearities in (\ref{deltasch1}) is well known in optical media (\cite{FilhoABL}, \cite{FilhoAR}, \cite{BouCLT}). In particular, we recall that for a effective linear
potential term, $V(x)$, the general NLS model
$$
iu_t+ u_{xx} + V(x)u +F(u)=0
$$
represents a trapping (wave-guiding) structure for light beams, induced by
an inhomogeneity of the local refractive index. In particular, the delta-function term in NLSCQ adequately represents a narrow trap which is able to capture broad solitonic beams.

The focus of this manuscript  is the existence and stability properties of  standing-wave solutions for the model NLSCQ, namely, solutions in the form
\begin{equation}\label{stand}
u(x,t)=e^{-i\omega t} \phi_\omega(x),\;\;\;\;\;\;\;\;\omega\in I\subset \mathbb R,
\end{equation}
where $I$ is  an interval and the profile $\phi_\omega:\mathbb R\to \mathbb R$ belongs to the domain of the quantum operator $-\partial_x^2-Z\delta(x)$, more exactly, for 
$$
\phi_\omega\in D(-\partial_x^2-Z\delta(x))=\{f\in H^1(\mathbb R)\cap H^2(\mathbb R-\{0\}): f'(0+)-f'(0-)=-Zf(0)\}.
$$
Thus, we have for $x\neq 0$, that the profile $\phi_\omega$ satisfies the elliptic equation
\begin{equation}\label{peak}
\phi''_\omega(x)-\omega \phi_\omega(x)-\lambda_1 \phi^3_\omega(x)-\lambda_2 \phi^5_\omega(x)=0.
\end{equation}
 The existence of solutions for equation (\ref{peak}) have been considered in analytic, numerical and experimental works for specific values of the parameters $Z, \omega, \lambda_1,  \lambda_2$. For $Z=0$, the rigorous existence and stability analysis of standing waves for NLSCQ model with general double-power nonlinearities can be found in \cite{Ma}-\cite{ota}. The existence and stability for $\lambda_1\neq 0$, $\lambda_2=0$, $\omega<0$,  $Z\neq 0$, and with the cubic nonlinearity term changed by the  single power-law nonlinearity $|\phi_\omega|^{p-1}\phi_\omega$, $p>1$, have been extensively discussed earlier in   \cite{A2G}, \cite{LeCoz}, \cite{FuJe}, \cite{fuoht}, \cite{ghw}, \cite{KaOh}. In a periodic framework, we have the recents works of Angulo \cite{A1cnoi} and Angulo\&Ponce \cite{A5Ponce}. 
 
 Now, it is well-known that for arbitrary values of $\lambda_1$ and  $\lambda_2$,  NSLCQ model can not have standing-wave solutions vanishing at the infinity (still in the case $Z=0$). Moreover,  in the case of the existence of solutions may happened that  exact solutions are not  available in general. Recently, Genoud\&Malomed\&Weishaupl in \cite{GMW} have studied the stability of  a family of explicit standing-wave solutions for NLSCQ model with  a combination of an attractive ($\lambda_1=2$) and repulsive ($\lambda_2=-1$) nonlinearities, and with a focusing $\delta$-interaction, $Z\in (0, \sqrt{3}) $, such that the wave phase velocity $-\omega$ satisfies $-\omega\in (\frac{Z^2}{4}, \frac34 +\frac{Z^2}{4})$. 
 Here, we extend and complete some  of the results in \cite{GMW} and we determine the profile  for (\ref{peak}) in the cases  attractive-attractive and attractive-repulsive.

The  peak-standing-wave solutions for NLSCQ model which we are interested here the following:
\begin{enumerate}
\item[1)] atractive-atractive ($\lambda_1, \lambda_2>0$): the parameters $\omega$ and  $Z$ in (\ref{peak}) will satisfy that $\omega<0$, $Z\in \mathbb R$, and the  condition $-\omega>\frac{Z^2}{4}$. Thus, we have the profile $\phi_{\omega}=\phi_{\omega, Z, \lambda_1, \lambda_2}$ for $\alpha=\frac{\lambda_1}{4}$ and $\beta=\frac{\lambda_2}{3}$
\begin{equation}\label{soldelta}
\phi_{\omega}(x)=\left[-\frac{\alpha}{\omega}+\frac{\sqrt{\alpha^2-\beta \omega}}{-\omega}\cosh
\left(2\sqrt{-\omega}\left(|x|+R^{-1}\left(\frac{Z}{2\sqrt{-\omega}}\right)\right)\right)\right]^{-\frac{1}{2}}
\end{equation}
with $R:(-\infty,\infty)\rightarrow(-1,1)$ being the diffeomorphism defined by 
\begin{equation}\label{R}
R(s)=\frac{\sqrt{\alpha^2-\beta \omega}\sinh(2\sqrt{-\omega}s)}{\alpha+\sqrt{\alpha^2-\beta \omega}\cosh(2\sqrt{-\omega}s)}.
\end{equation}
We note that for every $\lambda_2>0$, the expression $\sqrt{\alpha^2-\beta \omega}$ is well-defined.

\item[2)] attractive-repulsive ($\lambda_1>0$, $\lambda_2<0$): the profile $\phi_{\omega}$ in (\ref{soldelta}) is still a solution for (\ref{peak}), but we have the following restriction of parameters,
\begin{equation}\label{res}
\frac{Z^2}{4}<-\omega<-\frac{3\lambda_1^2}{16\lambda_2}, \qquad  |Z|< \frac{\sqrt{3}\lambda_1}{2\sqrt{-\lambda_2}}.
\end{equation}
\end{enumerate}

Figure 1 below shows the profile of $\phi_{\omega}$ for $\lambda_1=\lambda_2=1$ with $Z>0$ and $Z<0$.  Now, for the case of  repulsive nonlinearities ($\lambda_1, \lambda_2<0$) and $Z<0$, by using  a Pohazev type argument we can show  the nonexistence of nontrivial solutions for (\ref{peak}) vanishing at the infinity.

Theorem \ref{main} and Theorem \ref{main2}, below, establish our  stability results associated to the profile $\phi_{\omega}$ in (\ref{soldelta})  for the cases  $\lambda_1, \lambda_2 >0$ and $\lambda_1>0$, $\lambda_2 <0$. We note that in the case attractive-repulsive, we extend the results in \cite{GMW}. 

Now, the basic symmetry associated to equation \eqref{deltasch1} is the phase-invariance,  since the translation invariance of the solutions is not hold due to the defect. Thus,  our notion of stability (instability) will be based with regard to this symmetry and it is formulated as follows:
For $\eta>0$, let $\phi$ be a solution of \eqref{peak} and define the neighborhood
$$
U_\eta(\phi)=\Big\{v\in X: \inf_{\theta\in\mathbb R}\|v-e^{i\theta}\phi\|_X<\eta\Big\}.
$$

\begin{Definition}\label{dsta} The standing wave $e^{-i\omega t}\phi$ is (orbitally) stable by the
flow of the NLSCQ model \eqref{deltasch1} in $X$ if for any $\epsilon>0$ there exists $\eta>0$ such that for any $u_0\in U_\eta(\phi)$, the solution $u(t)$ of  \eqref{deltasch1} with $u(0)=u_0$ satisfies $u(t)\in U_\epsilon(\phi)$ for all $t\in \mathbb R$. Otherwise, $e^{-i\omega t}\phi$ is said to be (orbitally) unstable in $X$.
\end{Definition}

The space $X$ in Definition \ref{dsta} will be considered in our stability theory being as  $H^1(\mathbb{R})$ or $H_{even}^1(\mathbb{R})$ (The space of even fuctions on $H^1(\mathbb{R})$). Thus, our main stability results for  the peak-standing-wave profiles in (\ref{soldelta}) are the following.

\begin{Theorem}\label{main} 
For $\lambda_1=\lambda_2=1$ and $Z^*\approx-0.866025403784$, we have  for  $\omega<0$  such that $-\omega>\frac{Z^2}{4}$ and $\phi_{\omega}$ defined in (\ref{soldelta}) the following:
\begin{enumerate}
\item [a)] For $Z\geq0$, $e^{-i\omega t}\phi_{\omega,Z}$ is orbitally stable in $H^1(\mathbb{R})$.
\item [b)] For $Z\in(Z^*,0)$, $e^{-i\omega t}\phi_{\omega,Z}$ is orbitally unstable in $H^1(\mathbb{R})$.
\item [c)] For $Z\in(Z^*,\infty)$, $e^{-i\omega t}\phi_{\omega,Z}$ is orbitally stable in $H_{even}^1(\mathbb{R})$.
\item [d)] For $Z<Z^*$,  $e^{-i\omega t}\phi_{\omega,Z}$ is orbitally unstable in $H_{even}^1(\mathbb{R})$ and so in $H^1(\mathbb{R})$.
\end{enumerate}
\end{Theorem}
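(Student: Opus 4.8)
\medskip
\noindent The plan is to place Theorem~\ref{main} inside the Grillakis--Shatah--Strauss stability framework, adapted to the $\delta$-interaction as in the analysis of NLS with point potentials. Equation \eqref{deltasch1} conserves the charge $Q(u)=\tfrac12\|u\|_{L^2}^2$ and an associated energy $E$, the profile $\phi_\omega$ is a critical point of the action $S_\omega=E+\omega Q$, and $S_\omega''(\phi_\omega)$ decouples on real and imaginary perturbations into the self-adjoint operators
$$
L_{\omega,+}=-\partial_x^2-\omega-3\lambda_1\phi_\omega^2-5\lambda_2\phi_\omega^4,\qquad L_{\omega,-}=-\partial_x^2-\omega-\lambda_1\phi_\omega^2-\lambda_2\phi_\omega^4,
$$
each acting on $L^2(\mathbb R)$ with domain $\{f\in H^1(\mathbb R)\cap H^2(\mathbb R\setminus\{0\}):f'(0+)-f'(0-)=-Zf(0)\}$. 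Since $\phi_\omega>0$ and $L_{\omega,-}\phi_\omega=0$, the wave $\phi_\omega$ is the positive ground state of $L_{\omega,-}$, so $n(L_{\omega,-})=0$ and $\ker L_{\omega,-}=\mathrm{span}\{\phi_\omega\}$. The abstract theory then reduces the four claims to: (i) the sign of $N'(\omega)$, with $N(\omega):=\|\phi_\omega\|_{L^2}^2$, equivalently the concavity of $d(\omega):=S_\omega(\phi_\omega)$; and (ii) the Morse index of $L_{\omega,+}$, computed once on $H^1(\mathbb R)$ and once on $H^1_{even}(\mathbb R)$. Since \eqref{deltasch1} commutes with $x\mapsto-x$, the space $H^1_{even}(\mathbb R)$ is flow-invariant, so instability in $H^1_{even}(\mathbb R)$ gives instability in $H^1(\mathbb R)$; this yields part (d) from its even-subspace version.

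\medskip
\noindent \emph{Step 1 (slope condition).} Using the closed form \eqref{soldelta}--\eqref{R}, I would evaluate $N(\omega)$ explicitly --- the change of variables built from the diffeomorphism $R$ turns $\int_{\mathbb R}\phi_\omega^2\,dx$ into an elementary integral --- and check that, throughout $-\omega>Z^2/4$, $N'(\omega)$ has the sign making $d$ strictly concave, $d''(\omega)<0$, the case favorable to stability when $n(L_{\omega,+})=1$. This is lengthy but routine; the only care needed is the $\omega$-dependence of the inner shift $R^{-1}\!\big(Z/2\sqrt{-\omega}\big)$.

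\medskip
\noindent \emph{Step 2 (Morse index --- the crux).} Write $L_{\omega,+}=L_{\omega,+}^{even}\oplus L_{\omega,+}^{odd}$; on the odd part the $\delta$ is invisible and $L_{\omega,+}^{odd}=-\partial_x^2-\omega-3\lambda_1\phi_\omega^2-5\lambda_2\phi_\omega^4$ carries a Dirichlet condition at $0$, while on the even part it carries the Robin condition $f'(0+)=-\tfrac Z2 f(0)$. At $Z=0$, with $\phi_{\omega,0}$ the cubic--quintic soliton, Sturm--Liouville/soliton theory gives $n(L_{\omega,+})=1$ with an even negative eigenfunction and $\ker L_{\omega,+}=\mathrm{span}\{\phi_{\omega,0}'\}$ ($\phi_{\omega,0}'$ odd), hence $n^{even}(L_{\omega,+})=1$, $n^{odd}(L_{\omega,+})=0$ with a simple zero eigenvalue in the odd sector. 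As $Z$ varies, $Z\mapsto L_{\omega,+}(Z)$ is an analytic family (norm-resolvent sense, accounting for the $Z$-dependence of \eqref{soldelta}), so the indices change only when an eigenvalue meets $0$. For the odd sector, a first-order perturbation computation --- using $\partial_Z\phi_{\omega,0}\le0$ pointwise, read off from \eqref{soldelta} --- shows $\tfrac{d}{dZ}\mu^{odd}(Z)\big|_{Z=0}>0$ for the bottom odd eigenvalue $\mu^{odd}$, so $\mu^{odd}(Z)<0$ exactly for $Z<0$ near $0$, and $n^{odd}(L_{\omega,+})=1$ there. For the even sector, the first zero crossing is found by matching the even $L^2(0,\infty)$-solution of $L_{\omega,+}\psi=0$ against the Robin condition at $0$; for $\lambda_1=\lambda_2=1$ this transcendental equation has first root $Z^*\approx-0.866025\ldots$, $\omega$-independent, and a continuation/monotonicity argument then gives $n^{even}(L_{\omega,+})=1$ for $Z>Z^*$ and $=2$ for $Z<Z^*$ in the admissible range. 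To make the continuation through the focusing regime $Z>0$ rigorous --- and to certify that no further even eigenvalue slips below $0$ --- I would use the self-adjoint-extension description: $L_{\omega,+}^{even}(Z)$ is the $Z$-parametrized extension of the symmetric operator obtained by restricting to functions vanishing near $0$, and Krein's resolvent formula together with the monotonicity of the associated Weyl function compares its Morse index with that of a reference extension. Collecting these, $n^{even}(L_{\omega,+})=1$ for $Z\in(Z^*,\infty)$ and $=2$ for $Z<Z^*$, while $n^{odd}(L_{\omega,+})=0$ for $Z\ge0$ and $=1$ for $Z\in(Z^*,0)$; hence $n(L_{\omega,+})=1$ for $Z\ge0$ and $=2$ for $Z\in(Z^*,0)$.

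\medskip
\noindent \emph{Step 3 (conclusion).} Plugging Steps 1--2 into the Grillakis--Shatah--Strauss criteria: on $H^1(\mathbb R)$, for $Z\ge0$ one has $n(L_{\omega,+})=1=p(d'')$, giving stability (part a); for $Z\in(Z^*,0)$ the index gap $n(L_{\omega,+})-p(d'')=2-1$ is odd, giving instability (part b) --- alternatively via the spectral-to-orbital instability result, since then $JS_\omega''(\phi_\omega)$ has an eigenvalue with positive real part. On $H^1_{even}(\mathbb R)$, for $Z>Z^*$ one has $n^{even}(L_{\omega,+})=1=p(d'')$, giving stability (part c); for $Z<Z^*$ the gap $2-1$ is again odd, giving instability, which by flow-invariance of $H^1_{even}(\mathbb R)$ also gives instability in $H^1(\mathbb R)$ (part d). The genuinely hard part is Step 2: showing the even Morse index is \emph{exactly} $2$ below $Z^*$ (ruling out further zero crossings) and pinning down the threshold $Z^*$ --- this is where the perturbation/continuation argument and the extension-theoretic Morse-index count do the real work.
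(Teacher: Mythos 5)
Your overall framework (GSS, decoupling into $L_{\omega,+}$ and $L_{\omega,-}$, even/odd splitting) matches the paper's, but the mechanism you assign to the threshold $Z^*$ is wrong, and this breaks Steps 1 and 2. In the paper, $Z^*$ has nothing to do with an eigenvalue of $L_{\omega,+}$ crossing zero: the Morse index of $\mathcal L_{1,Z}$ is constant on each half-line of $Z$ ($n=1$ for $Z>0$, $n=2$ for $Z<0$, Theorem \ref{Numberofnegeig}), the second negative eigenfunction is \emph{odd} for all admissible $Z$ (Proposition \ref{ExtAutPar}), so the even-restricted index equals $1$ for every admissible $Z\neq 0$ --- including $Z<Z^*$. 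Your claim that a second \emph{even} eigenvalue crosses zero at $Z^*$, giving $n^{even}=2$ below $Z^*$, is ruled out by the trivial-kernel lemma (Lemma \ref{Kert}): any even zero mode must equal $\pm\alpha_0\phi_\omega'$ on each half-line, and the matching conditions at $x=0$ force $\phi_\omega^2(0)=-3\lambda_1/(4\lambda_2)$, which is negative for $\lambda_1=\lambda_2=1$. So the transcendental equation you propose to solve for $Z^*$ has no root in the admissible range, and continuation from $Z=0$ keeps the index frozen on all of $Z<0$.

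What $Z^*$ actually marks is the sign change of the slope quantity $-\partial_\omega\|\phi_{\omega,Z}\|^2$ (Theorem \ref{convg}, obtained by explicit computation of $\|\phi_{\omega,Z}\|^2$ plus a numerical study): $p_Z(\omega)=1$ for $Z>Z^*$ and $p_Z(\omega)=0$ for $Z<Z^*$. Your Step 1 assertion that $d''(\omega)<0$ throughout the admissible region is therefore false precisely in the regime of part (d). The correct bookkeeping for (d) is $n^{even}=1$, $p=0$, odd gap; your version ($n^{even}=2$, $p=1$) reaches the same parity only because two incorrect claims cancel, which is not a proof. Parts (a)--(c) survive in your scheme only where your slope claim happens to be true ($Z>Z^*$). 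A secondary point: passing from an odd index gap (spectral instability) to orbital instability is not automatic here; the paper invokes Ohta's result when $n(\mathcal H_{\omega,Z})=2$ and the Henry--Perez--Wreszinski argument (which needs the $C^2$ regularity of the data-to-solution map from Theorem \ref{cazi}) for the even-subspace case, a step your proposal mentions only in passing.
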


\begin{Remark} Our numerical simulations (see Figures 2, 3 and 4 below) showed us that to find exactly the threshold value $Z^*$ such that the mapping $\omega \to  -\|\phi_{\omega,Z^*}\|^2$ has a only critical point can be very tricky, but it is possible to see that in the neighborhood of $Z^*$ that map does not oscillate.  In Theorem \ref{main} we consider the specific attractive-values $\lambda_1=\lambda_2=1$ due  to the complexity of the formulas appearing in the analysis established in Section 5 below. But, our numerical simulations showed that a similar  stability behavior of the profile $\phi_{\omega}$ is obtained for  $\lambda_1, \lambda_2>0$  and a threshold value of $Z$, $Z^*=Z^*(\lambda_1, \lambda_2)$.
\end{Remark}

\begin{Theorem}\label{main2} 
Let $\lambda_1>0$ and $\lambda_2<0$. Then, for  $\omega<0$ and $Z\in \mathbb R$ such that 
$$
\frac{Z^2}{4}<-\omega<-\frac{3\lambda_1^2}{16\lambda_2}, \qquad  |Z|< \frac{\sqrt{3}\lambda_1}{2\sqrt{-\lambda_2}},
$$
we have that $\phi_{\omega}$ defined in (\ref{soldelta}) has the following property:
\begin{enumerate}
\item [a)] For $0<Z<\frac{\sqrt{3}\lambda_1}{2\sqrt{-\lambda_2}}$, $e^{-i\omega t}\phi_{\omega,Z}$ is orbitally stable  in $H^1(\mathbb{R})$.
\item [b)] For $-\frac{\sqrt{3}\lambda_1}{2\sqrt{-\lambda_2}}<Z<0$, $e^{-i\omega t}\phi_{\omega,Z}$ is orbitally unstable in $H^1(\mathbb{R})$.
\item [c)] For $-\frac{\sqrt{3}\lambda_1}{2\sqrt{-\lambda_2}}<Z<0$, $e^{-i\omega t}\phi_{\omega,Z}$ is orbitally stable  in $H_{even}^1(\mathbb{R})$.
\end{enumerate}
\end{Theorem}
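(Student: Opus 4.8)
The plan is to apply the Grillakis--Shatah--Strauss (GSS) stability/instability machinery, adapted to the only surviving symmetry of \eqref{deltasch1}, the gauge (phase) invariance (translations are destroyed by the defect). Writing $Q(u)=\tfrac12\|u\|_{L^2}^2$ and $E$ for the conserved energy, the profile $\phi_\omega$ of \eqref{soldelta} is a critical point of the action $S_\omega=E-\omega Q$, and its (in)stability is encoded by the self-adjoint operator $S_\omega''(\phi_\omega)=\mathrm{diag}(\mathcal L_+,\mathcal L_-)$ acting on $L^2(\mathbb R)$, where on $D(-\partial_x^2-Z\delta)$
\[
\mathcal L_+=-\partial_x^2-\omega-3\lambda_1\phi_\omega^2-5\lambda_2\phi_\omega^4,\qquad
\mathcal L_-=-\partial_x^2-\omega-\lambda_1\phi_\omega^2-\lambda_2\phi_\omega^4 .
\]
Since $\phi_\omega>0$ solves $\mathcal L_-\phi_\omega=0$ and $-\partial_x^2-Z\delta$ generates a positivity preserving semigroup, $\phi_\omega$ is the ground state of $\mathcal L_-$; thus $\mathcal L_-\geq0$, $\ker\mathcal L_-=\mathrm{span}\{\phi_\omega\}$ and $0$ is isolated in $\sigma(\mathcal L_-)$. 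This settles the imaginary direction and reduces matters to (i) counting negative eigenvalues of $\mathcal L_+$ on $H^1(\mathbb R)$ (parts a,b) and on $H^1_{even}(\mathbb R)$ (part c), and (ii) a slope condition. I would also record the local well-posedness of NLSCQ in $H^1(\mathbb R)$ and the invariance of $H^1_{even}(\mathbb R)$, so that Definition~\ref{dsta} makes sense in both spaces.

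For the Morse index of $\mathcal L_+$, use the evenness of $\phi_\omega$ to split $L^2(\mathbb R)=L^2_{even}\oplus L^2_{odd}$: on odd functions the $\delta$ is invisible, so $\mathcal L_+|_{odd}$ is unitarily equivalent to the half-line operator $A_D$ (same differential expression, Dirichlet at $0$), and on even functions $\mathcal L_+|_{even}$ is equivalent to $A_R$ with the Robin condition $g'(0^+)=-\tfrac Z2 g(0)$. The decisive remark is that $\psi:=\phi_\omega'|_{(0,\infty)}$ solves the ODE $(-\partial_x^2-\omega-3\lambda_1\phi_\omega^2-5\lambda_2\phi_\omega^4)\psi=0$ (differentiate \eqref{peak}) and decays exponentially, hence is the recessive solution at $+\infty$; a renormalized Sturm oscillation argument then gives that the number of negative eigenvalues of $A_D$ (resp. $A_R$) is controlled by the zeros of $\psi$ in $(0,\infty)$ together with its boundary value at $0$. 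Now $\psi(0^+)=\phi_\omega'(0^+)=-\tfrac Z2\phi_\omega(0)\neq0$, so $0\notin\sigma(A_D)$; for $Z>0$ the profile in \eqref{soldelta} is strictly decreasing on $(0,\infty)$, $\psi$ has no zero and $n(A_D)=0$, while for $Z<0$ the profile has exactly one interior maximum, $\psi$ has exactly one zero and $n(A_D)=1$. For $A_R$, combining the Robin condition on $\psi$ with the first integral $\tfrac{Z^2}{4}=-\omega-\tfrac{\lambda_1}{2}\phi_\omega(0)^2-\tfrac{\lambda_2}{3}\phi_\omega(0)^4$ of \eqref{peak} shows that $0\in\sigma(A_R)$ if and only if $3\lambda_1+4\lambda_2\phi_\omega(0)^2=0$; the bound $-\omega<-\tfrac{3\lambda_1^2}{16\lambda_2}$ in \eqref{res} is exactly what keeps $\phi_\omega(0)^2$ away from that critical value $\tfrac{3\lambda_1}{4|\lambda_2|}$ throughout the (connected) admissible region. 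Hence $n(A_R)$ is constant there, and evaluating at $Z=0$, where $A_R$ is the Neumann restriction of the classical cubic--quintic soliton operator (Morse index $1$), gives $n(A_R)=1$ for all admissible $Z$. Therefore $n(\mathcal L_+)=n(A_D)+n(A_R)$ equals $1$ for $Z>0$ and $2$ for $Z<0$, $n(\mathcal L_+|_{even})=n(A_R)=1$ throughout, and $\mathcal L_+$ (resp. $\mathcal L_+|_{even}$) is invertible on the relevant space.

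The last ingredient is the Vakhitov--Kolokolov slope condition. Differentiating $S_\omega'(\phi_\omega)=0$ in $\omega$ gives $\mathcal L_+\,\partial_\omega\phi_\omega=\phi_\omega$, so $\langle\mathcal L_+^{-1}\phi_\omega,\phi_\omega\rangle=\langle\partial_\omega\phi_\omega,\phi_\omega\rangle=\tfrac12\tfrac{d}{d\omega}\|\phi_\omega\|_{L^2}^2=-d''(\omega)$ with $d(\omega)=S_\omega(\phi_\omega)$. I would evaluate $\tfrac{d}{d\omega}\|\phi_\omega\|_{L^2}^2$ explicitly from \eqref{soldelta}--\eqref{R} and verify that on the whole range \eqref{res} it keeps the sign that makes $d''$ contribute exactly one positive direction, i.e. $p:=p(d'')=1$ (unlike the attractive--attractive case of Theorem~\ref{main}, no threshold appears here). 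With the counts above, the three assertions follow from the GSS criteria: for $Z>0$ on $H^1(\mathbb R)$, $n(\mathcal L_+)=1=p$ and $\ker\mathcal L_-=\mathrm{span}\{\phi_\omega\}$ give orbital stability (a); restricting to $H^1_{even}(\mathbb R)$ for $Z<0$, again $n(\mathcal L_+|_{even})=1=p$ gives orbital stability (c); for $Z<0$ on $H^1(\mathbb R)$, $n(\mathcal L_+)=2$ so $n(\mathcal L_+)-p=1$ is odd, and the GSS/Grillakis instability theorem — equivalently the spectral-instability-implies-orbital-instability result established earlier in the paper, applied to the unstable eigendirection of $J\,S_\omega''(\phi_\omega)$, which is odd — yields orbital instability (b). There is no conflict with (c) since the unstable mode is odd and hence absent from $H^1_{even}(\mathbb R)$.

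I expect the main obstacle to be twofold. First, the explicit sign analysis of $\tfrac{d}{d\omega}\|\phi_\omega\|_{L^2}^2$ through the cumbersome closed form \eqref{soldelta}--\eqref{R} (with $R^{-1}$ nested inside a $\cosh$) is conceptually routine but computationally heavy, and one must show that the clean dichotomy at $Z=0$ genuinely persists over the entire region \eqref{res}. Second, the Sturm/oscillation bookkeeping for the operators with $\delta$-interaction — the half-line reduction, the characterization of $0\in\sigma(A_R)$ via the first integral of \eqref{peak}, and the connectedness of the admissible parameter set used in the continuation — must be carried out carefully, since it is exactly this step that both certifies the invertibility of $\mathcal L_+$ and pins down the single (or double) negative eigenvalue governing the stability/instability alternative.
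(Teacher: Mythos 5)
Your argument reaches the same conclusions through the same GSS skeleton (Theorem \ref{st0}): $\mathcal L_{2,Z}\geq 0$ with kernel spanned by $\phi_{\omega,Z}$, a Morse index count for $\mathcal L_{1,Z}$, and the slope condition $-\partial_\omega\|\phi_{\omega,Z}\|^2>0$. The genuinely different step is how you compute the Morse index. You decompose $L^2(\mathbb R)$ into even and odd sectors, reduce $\mathcal L_{1,Z}$ to half-line operators with Robin (even) and Dirichlet (odd) conditions at the origin, and count negative eigenvalues by renormalized Sturm oscillation applied to $\phi_{\omega,Z}'$ restricted to $(0,\infty)$: no zero for $Z>0$, one zero for $Z<0$ in the Dirichlet sector, and a continuation argument in the Robin sector anchored at $Z=0$. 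The paper instead runs analytic perturbation theory in $Z$ (Lemmas \ref{lem6.5}--\ref{lem6.8}: Kato--Rellich applied to the second eigenvalue, $\Pi_2(Z)=\beta Z+O(Z^2)$ with $\beta>0$) and then a Riesz-projection continuation (Theorem \ref{Numberofnegeig}), using Lemma \ref{Kert} to guarantee that no eigenvalue crosses zero along the way. Note that your characterization ``$0\in\sigma(A_R)$ iff $3\lambda_1+4\lambda_2\phi_{\omega,Z}(0)^2=0$'' is exactly the paper's identity (\ref{identi}) in Lemma \ref{Kert} (the putative kernel element built from $\pm\alpha_0\phi_{\omega,Z}'$ is even, so the paper's nondegeneracy lemma is precisely the statement that your Robin operator has trivial kernel), so the two continuations rest on the same algebraic obstruction; your oscillation count in the odd sector buys a more transparent, non-perturbative explanation of why the index jumps from $1$ to $2$ as $Z$ changes sign, while the paper's perturbative route generalizes more readily to situations where the profile's monotonicity on $(0,\infty)$ is less explicit. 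Your remaining ingredients match the paper's: the second (odd) negative direction disappears on $H^1_{even}$ (Proposition \ref{ExtAutPar}), giving (c); and orbital (not merely spectral) instability in (b) is obtained either from Ohta's result for $n(\mathcal H)=2$ or from the $C^2$ data-to-solution map of Theorem \ref{cazi} via \cite{HenPerWre}, as in Remark \ref{nonstab}. For $\lambda_2<0$ global existence holds with no size restriction (Theorem \ref{gwpdel}), so no conditional-stability caveat is needed here, a point worth stating explicitly.

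One caveat: the slope condition is the quantitative heart of the theorem, and your proposal only promises to ``verify the sign'' of $\partial_\omega\|\phi_{\omega,Z}\|^2$ from the closed form (\ref{soldelta})--(\ref{R}). Be aware that the paper itself does not prove this analytically either; Theorem \ref{convg1} is asserted on the basis of numerical plots over the admissible region (\ref{res}). So this step is not a gap relative to the paper's own standard, but if you intend your write-up to be fully rigorous you would be proving something the paper leaves to \emph{Mathematica}.
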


\begin{Remark} 
The stability result in item a) -Theorem \ref{main2} with the specific restrictions $\lambda_1=2$, $\lambda_2=-1$, $\frac{Z^2}{4}<-\omega<\frac34$, and  $Z\in (0, \sqrt{3})$, it was obtained recently by Genoud\&Malomed\&Weishaupl in \cite{GMW}.
\end{Remark} 

Our approach for proving Theorem \ref{main} and Theorem \ref{main2}  will be based in the general framework developed by Grillakis\&Shatah\&Strauss \cite{grillakis1}-\cite{grillakis2}, for a Hamiltonian system which is invariant under a one-parameter unitary group of operators. We recall that this approach requires spectral analysis of certain self-adjoint Schr\"odinger operator and in particular by determining the Morse index  is one of the more delicate  issues in the approach. In particular, our strategy will be based in analytic perturbation theory. Now for $Z>0$, we present a new approach  for obtaining that index based in the extension theory of symmetric operators by von Neumann and Krein (see Appendix). 

We also note that for applying the  instability criterium in \cite{grillakis2} in our case (initially being of spectral instability), we need to justify nonlinear instability from a spectral instability behavior (see Remark \ref{nonstab}). We note that our argument (spectral instability $\to$ nonlinear instability) complements several instability results in the literature for the case $\lambda_2=0$ in (\ref{deltasch1}).

This paper is organized as follows. Section 2 is devoted to establish a local and global well-posedness theory for the NLSCQ model \eqref{deltasch1}. Section 3 describes the construction of the profile $\phi_{\omega}$ in (\ref{peak}). In Section 4 we establish the spectral theory information for applying the approach in \cite{grillakis1}, \cite{grillakis2}.
 In Section 6  we give  the proof of the stability/instability Theorems \ref{main}-\ref{main2}.

\section[Local and global well posedness]{Local and global well-posedness for the NLSCQ model}\label{boacolh}

In this section we discuss some results about the local and global well-posedness problem associated to the NLSCQ equation in $H^1(\mathbb{R})$,
\begin{equation}\label{cachy12}
\left\{
\begin{array}{lll}
\displaystyle iu_{t}-A_Zu+(\lambda_1 |u|^2+\lambda_2|u|^4)u=0,\\
u(0)=u_0\in H^1(\mathbb{R}),
\end{array}
\right.
\end{equation}
where, $A_Z$ is the self-adjoint operator defined fromally by
\begin{equation}\label{FormalExpression}
A_Z:=-\frac{d^2}{dx^2}-Z\delta(x).
\end{equation}
 We recall that expression in (\ref{FormalExpression}) can be understood as a family of self-adjoint operators to one-paremeter $Z\to A_Z$  with domain $D(A_Z)$,
\begin{equation}\label{dain23}
D(A_Z)=\left\{g\in H^1(\mathbb{R})\cap H^2(\mathbb{R}-\left\{0\right\})|g'(0+)-g'(0-)=-Zg(0) \right\},
\end{equation}
and such that $A_Z g(x)= -\frac{d^2}{dx^2}g(x)$ for $x\neq 0$. That family represents all the self-adjoint extensions associated to  the following closed, symmetric, densely defined linear operator (see \cite{Albeverio}): 
\begin{equation*}
\left\{
\begin{aligned}
A_0&=-\frac{d^2}{dx^2}\\
D(A_0)&=\{g\in H^2(\mathbb{R}): g(0)=0 \}.
\end{aligned}
\right.
\end{equation*}
Moreover, for $Z\in \mathbb R$ is well known  that the essential spectrum of $A_Z $ is the nonnegative real axis, $\Sigma_{ess}(A_Z)=[0,+\infty)$. For $Z>0$, $A_Z $ has exactly one negative, simple eigenvalue, i.e., its discrete spectrum $\Sigma_{dis}(A_Z )$ is $\Sigma_{dis}(A_Z )=\{{-Z^2/4}\}$, with a strictly (normalized) eigenfunction $
 \Psi_Z(x)=\sqrt{\frac{Z}{2}}e^{-\frac{Z}{2}|x|}$. For $Z\leqq 0$,  $A_Z $ has not discrete spectrum, $\Sigma_{dis}(A_Z )=\emptyset$. Therefore the 
operators $A_Z$ are bounded from below, more exactly,
\begin{equation}\label{boundbelo}
\left\{
\begin{aligned}
&A_Z\geq-Z^2/4, \hspace{0.5cm}&Z>0.\\ 
&A_Z\geq 0,\hspace{1.0cm}&Z\leq0.
\end{aligned}
\right.
\end{equation}

Our local well-posedness theory is the following.

\begin{Theorem}\label{cazi} For any $u_0\in H^1(\mathbb{R})$, there exists $T>0$ and a unique solution $u$ of  (\ref{cachy12}) such that $u\in C([-T,T]; H^1(\mathbb{R}))\cap C^1([-T,T]; H^{-1} (\mathbb{R}))$ and  $u(0)=u_0$.  Moreover, since the nonlinearity $F(u, \bar{u})=\lambda_1|u|^2u+\lambda_2 |u|^4u$ is smooth we obtain that the mapping
$$
u_0\in H^1(\mathbb{R}) \to u\in C([-T,T]; H^1(\mathbb{R}))
$$
 is smooth.

If an initial data $u_0$ is even the solution $u(t)$ is also even.
\end{Theorem}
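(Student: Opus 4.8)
The plan is to treat \eqref{cachy12} by the standard semigroup/contraction method of Cazenave, adapted to the self-adjoint operator $A_Z$ instead of the free Laplacian. Since $A_Z$ is self-adjoint and bounded below by \eqref{boundbelo}, I would first fix $M>0$ with $A_Z+M\geq 1$; then Stone's theorem gives that $-iA_Z$ generates the unitary group $e^{-itA_Z}$ on $L^2(\mathbb{R})$, and the quadratic form domain $Q(A_Z)$ coincides with $H^1(\mathbb{R})$, with $\|(A_Z+M)^{1/2}f\|_{L^2}$ an equivalent norm on $H^1(\mathbb{R})$ (classical for the $\delta$-interaction, see \cite{Albeverio}). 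Because $(A_Z+M)^{1/2}$ commutes with $e^{-itA_Z}$, the propagator is an isometry of $H^1(\mathbb{R})$ in this equivalent norm and, by duality, extends to a $C_0$-group on $H^{-1}(\mathbb{R})$; moreover $A_Z\colon H^1(\mathbb{R})\to H^{-1}(\mathbb{R})$ is bounded. I would record these mapping properties at the outset, since they replace the role usually played by the $H^2$-regularity of the free evolution.

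Next I would rewrite \eqref{cachy12} in Duhamel form,
\begin{equation*}
u(t)=e^{-itA_Z}u_0+i\int_0^t e^{-i(t-s)A_Z}F(u(s))\,ds,\qquad F(u)=\lambda_1|u|^2u+\lambda_2|u|^4u,
\end{equation*}
and solve it by the contraction mapping principle in $C([-T,T];H^1(\mathbb{R}))$. The key nonlinear estimate is that $H^1(\mathbb{R})\hookrightarrow L^\infty(\mathbb{R})$, so $H^1(\mathbb{R})$ is a Banach algebra; hence $F$ maps $H^1(\mathbb{R})$ into itself and is Lipschitz on bounded sets, with $\|F(u)-F(v)\|_{H^1}\leq C(\|u\|_{H^1},\|v\|_{H^1})\,\|u-v\|_{H^1}$. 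Working on the ball of radius $2\|u_0\|_{H^1}$ and choosing $T=T(\|u_0\|_{H^1})>0$ small makes the Duhamel map a contraction, which yields a unique fixed point $u\in C([-T,T];H^1(\mathbb{R}))$; a Gronwall argument then upgrades this to uniqueness in the full class.

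For the remaining assertions: from the integral equation, $A_Z u\in C([-T,T];H^{-1})$ and $F(u)\in C([-T,T];H^1)\subset C([-T,T];H^{-1})$, so $u$ is differentiable into $H^{-1}$ with $iu_t=A_Zu-F(u)$, giving $u\in C^1([-T,T];H^{-1}(\mathbb{R}))$. Smoothness of $u_0\mapsto u$ follows by applying the implicit function theorem to the fixed-point equation, whose right-hand side is smooth in $u$ because $F$ is a polynomial in $u,\bar u$ and the propagator is linear; differentiating in $u_0$ also identifies the derivative as the solution of the linearized equation. Finally, the reflection $(\mathcal{R}u)(x)=u(-x)$ commutes with $A_Z$ (the $\delta$-potential being even) and with $F$, and it preserves $H^1(\mathbb{R})$; hence if $u_0$ is even then $\mathcal{R}u$ solves \eqref{cachy12} with the same initial data, and uniqueness forces $u(t)=\mathcal{R}u(t)$ for all $t$, so $u(t)$ remains even (equivalently, the whole fixed-point argument may be run in the closed invariant subspace $H^1_{even}(\mathbb{R})$).

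The step I expect to require the most care is the first one: because $D(A_Z)$ is not $H^2(\mathbb{R})$ but the space \eqref{dain23} with the jump condition, one must verify carefully that the form space associated with $A_Z$ is exactly $H^1(\mathbb{R})$ with equivalent norm, that its dual is $H^{-1}(\mathbb{R})$, and that both $e^{-itA_Z}$ and $A_Z$ have the stated boundedness on this scale. Once this functional-analytic bookkeeping is in place, the contraction argument, the $C^1$-in-$H^{-1}$ regularity, the smooth dependence on data, and the preservation of evenness are all routine.
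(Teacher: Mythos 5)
Your proposal is correct and follows the same overall skeleton as the paper (Duhamel formula, contraction in $C([-T,T];H^1)$, implicit function theorem for smooth dependence on data, and uniqueness to propagate evenness), but it replaces the paper's key technical ingredient by a genuinely different one. The paper establishes the $H^1$-boundedness of $e^{-itA_Z}$ concretely, using the explicit convolution representation \eqref{pospro1} of the propagator together with the pointwise identities $\partial_x\bigl[e^{-itA_Z}\psi\bigr](x)=e^{-itA_Z}\psi'(x)$ for $x>0$ and $\partial_x\bigl[e^{-itA_Z}\psi\bigr](x)=-e^{-itA_Z}\psi'(x)+2e^{it\partial_x^2}\psi'(x)$ for $x<0$, plus $L^2$-unitarity of both groups; it then estimates the nonlinearity via Gagliardo--Nirenberg as in \eqref{pres_space}. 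You instead argue abstractly: the form domain $Q(A_Z)$ equals $H^1(\mathbb{R})$ with $\|(A_Z+M)^{1/2}\cdot\|_{L^2}$ an equivalent norm, and since $(A_Z+M)^{1/2}$ commutes with $e^{-itA_Z}$ by the spectral calculus, the propagator is an isometry of $H^1$ in that norm; your evenness argument likewise uses commutation of the reflection with $A_Z$ rather than inspecting \eqref{pospro1}. Your route is closer to the abstract framework of Theorem 3.7.1 in Cazenave (which the paper itself cites as an alternative), is insensitive to the sign of $Z$ (the paper restricts to $Z<0$ "without loss of generality" precisely because the explicit formula differs by case), and generalizes to other point interactions; the paper's route is more elementary and self-contained, requiring no identification of the form domain. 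The one step you rightly flag as delicate --- that $Q(A_Z)=H^1(\mathbb{R})$ with equivalent norm --- is indeed the load-bearing claim of your version; it follows from $|f(0)|^2\leq\epsilon\|f'\|_{L^2}^2+C_\epsilon\|f\|_{L^2}^2$, so there is no gap, but it must be stated and proved rather than merely asserted.
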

\begin{proof}
The proof of  local existence of solutions can be obtained via the abstract result in Theorem 3.7.1 in \cite{Cazenave} and from the properties established in (\ref{boundbelo}). Here by convenience, we will use standard arguments of Banach's fixed point theorem. Thus, without loss of generality we consider $Z<0$, and we will give the principal steps of the method. Indeed, we consider the  mapping $P_{u_0}: C([-T, T], H^1(\mathbb R)))\longrightarrow C([-T,T], H^1(\mathbb R))$ given by 
\begin{equation}\label{duhamel}
P_{u_0}[u](t)=e^{-itA_Z}u_0+i\int\limits_0^te^{-i(t-s)A_Z} F(u(s), \overline{u(s)})ds
\end{equation}
where $e^{-itA_Z}$ represents the unitary group associated to  equation (\ref{cachy12}) which is given explicitly  by the formula (see \cite{AlbBrz95}, \cite{GavSch86}, \cite{DH}, \cite{HMZ1} for the case $Z>0$)
\begin{equation}\label{pospro1}
e^{-itA_Z}\psi(x)= e^{it\partial_x^2} (\psi\ast \tau_Z) (x) \chi^0_{+}(x) + \Big[e^{it\partial_x^2} \psi(x) + e^{it \partial_x^2} (\psi\ast \rho_Z) (-x) \Big ]\chi^0_{-}(x),
\end{equation}
where
$$
\rho_Z(x)=-\frac{Z}{2} e^{-\frac{Z}{2} x}\chi^0_{-}(x),\;\; \tau_Z(x)=\delta (x)+ \rho_Z(x).
$$
Here  $\chi^0_{+}$ and $\chi^0_{-}$  denote  the characteristic functions of $[0,+\infty)$ and $(-\infty, 0]$ respectively, and $e^{it\partial_x^2} 
$ denotes the free group of Schr\"odinger when  $Z=0$. Now, one of the delicate points in the analysis is to show that the map $P_{u_0}$ is well-defined. We start by estimating the nonlinear term $F(u(s), \overline{u(s)})$ Thus, by using  the one-dimensional Gagliardo-Nirenberg inequality, i.e. 
$$
\|u\|_{L^q}\leq C\|u'\|^{\frac12-\frac1q}\|u\|^{\frac12+\frac1q},\quad q>2
$$
where $C>0$, and the relation $|(|f|^{p-1} f)'|\leq C |f|^{p-1} |f'|$, one obtains using H\"older that for $u\in H^1(\mathbb R)$ 
\begin{equation}\label{pres_space}
||F(u(s), \overline{u(s)})||_{H^1(\mathbb R)}\leq C_1(||u||^3_{H^1(\mathbb R)}+ ||u||^5_{H^1(\mathbb R)}).
\end{equation}  

Next, using that for $x>0$ we obtain $\partial_x [e^{-itA_Z}\psi](x)= e^{-itA_Z}\psi'(x)$ and for $x<0$ the relation $\partial_x [e^{-itA_Z}\psi](x)=- e^{-itA_Z}\psi'(x) + 2 e^{it \partial_x^2} \psi'(x)$, the inequality (\ref{pres_space}), $L^2$-unitarity of $e^{-itA_Z}$ and  $e^{it \partial_x^2}$, we obtain 
\begin{equation}\label{neq_well_1}
||P_{u_0}[u](t)||_{H^1(\mathbb R)}\leq C_2||u_0||_{H^1(\mathbb R)}+C_3T\sup\limits_{s\in[0,T]}(||u(s)||^3_{H^1(\mathbb R)}+ ||u(s)||^5_{H^1(\mathbb R)}).
\end{equation}
where the positive constants $C_2, C_3$ do not depend on $u_0$. Thus, $P_{u_0}[u]\in H^1(\mathbb R)$. The  continuity property of $P_{u_0}[u](t)$ and the contraction property of $P_{u_0}$ is proved of standard way. Therefore, we obtain the existence of a unique solution for the Cauchy problem associated to   (\ref{deltasch1}) on $H^1(\mathbb R)$.

We note from (\ref{pospro1})  that for $u_0$  even  we obtain that $e^{-itA_Z}u_0$ is also even, so from Duhamel equation (\ref{duhamel}) and uniqueness we have that $u(t)$ is also even for every $t\in [-T, T]$.

Next, we recall that the argument based on the contraction mapping principle above has the advantage that it also shows that being the non linearity $F(u, \overline{u})$ smooth then it  regularity   is inherited by the mapping data-solution. In fact,  by following the ideas in Corollary 5.6 in Linares\&Ponce \cite{LP} we consider  for $(v_0, v)\in B(u_0;\epsilon)\times C([-T, T], H^1(\mathbb R)) $ the application
$$
\Gamma(v_0, v)(t)=v(t)- P_{v_0}[v](t),\qquad t\in [-T, T].
$$
Then, by the analysis above $\Gamma(u_0, u)(t)=0$, for all $t\in [-T, T]$. Now, since  $F(u, \overline{u})$ is smooth then $\Gamma$ is smooth. Therefore, by using the arguments for obtaining the local theory in  $H^1(\mathbb R))$ above  we can show that the operator $\partial_v\Gamma(u_0, u)$ is one-to-one and onto. Thus by the Implicit Function Theorem  there exists a smooth mapping $\Lambda: B(u_0;\delta)\to C([-T, T], H^1(\mathbb R))$ such that $\Gamma(v_0,  \Lambda (v_0))=0$ for all $v_0\in B(u_0;\delta)$. This argument establishes the smooth property of the mapping data-solution associated to equation   (\ref{deltasch1}).

\end{proof}

\begin{Remark} It is immediate that the proof of local well-posedness in Theorem \ref{cazi} can be extended to the nonlinearity $G(u, \bar{u})
=\lambda_1|u|^{p-1}u+\lambda_2 |u|^{q-1}u$, $p>1$ and $q>1$. Now, the smooth property of the mapping data-solution can be only to assure for $p-1$ and $q-1$ being an even integer.  For neither $p-1$ or $q-1$ not being an even integer we have that $G(u, \bar{u})$ is $C^n$ with $n=min\{[p], [q]\}$ and  so the mapping data-solution will be $C^n$.

\end{Remark}

With regarding to the existence of global solution for  (\ref{cachy12}), we have the existence of the following two conserved quantities: the energy
\begin{equation}\label{firstintegral3delta}
E(u)=\frac{1}{2}\int_{\mathbb{R}}|u_x|^2dx-\frac{\lambda_1}{4}\int_{\mathbb{R}}|u|^4dx-\frac{\lambda_2}{6}\int_{\mathbb{R}}|u|^6dx-\frac{Z}{2}|u(0)|^2.
\end{equation}
and the charge $Q$ 
\begin{equation}\label{secondintegral3delta}
Q(u)=\frac{1}{2}\int_{\mathbb{R}}|u|^2dx.
\end{equation}

Now, for the case $Z=0$ in  (\ref{deltasch1}) is well-known that the double-power nonlinearity induce restrictions on the existence global de solutions. The following Theorem shows that a similar picture is happening for $Z\neq 0$. 

\begin{Theorem}\label{gwpdel}
The Cauchy problem (\ref{cachy12}) is globally well-posedness  in $H^1(\mathbb{R})$ provide the norm of the initial data $u(0)=u_0$ is small in $L^2(\mathbb{R})$ in the case  $\lambda_1\neq 0$ and $\lambda_2>0$. 

For  $\lambda_1\neq 0$ and $\lambda_2<0$ we obtain  global existence of solutions without restriction on the size of the initial data.

\end{Theorem}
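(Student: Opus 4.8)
The plan is to upgrade the local theory of Theorem~\ref{cazi} to a global one by means of the conservation laws \eqref{firstintegral3delta}--\eqref{secondintegral3delta}. From the fixed-point construction in Theorem~\ref{cazi} one has the usual blow-up alternative: the solution extends to a maximal interval $(-T_{\max},T_{\max})$, and if $T_{\max}<+\infty$ then $\|u(t)\|_{H^1(\mathbb R)}\to\infty$ as $t\uparrow T_{\max}$. Hence it suffices to produce an \emph{a priori} bound on $\|u(t)\|_{H^1(\mathbb R)}$ valid on the whole existence interval. First I would record that for $H^1$ solutions the charge $Q$ and the energy $E$ are conserved; this follows in the standard way from a regularization/approximation argument together with the continuity of the data-to-solution map in Theorem~\ref{cazi}, and it uses that $u\mapsto|u(0)|^2$ is continuous on $H^1(\mathbb R)$ by the trace inequality. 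In particular $\|u(t)\|_{L^2}^2=\|u_0\|_{L^2}^2$ for all $t$ in the existence interval, so the $L^2$-part of the $H^1$-norm is automatically controlled and only $\|u_x(t)\|_{L^2}$ remains to be bounded.

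The core estimate comes from solving \eqref{firstintegral3delta} for the gradient term,
$$
\tfrac12\|u_x(t)\|_{L^2}^2=E(u_0)+\tfrac{\lambda_1}{4}\|u(t)\|_{L^4}^4+\tfrac{\lambda_2}{6}\|u(t)\|_{L^6}^6+\tfrac{Z}{2}|u(t,0)|^2,
$$
and controlling the three correction terms by the one-dimensional Gagliardo--Nirenberg and Sobolev-trace inequalities $\|f\|_{L^4}^4\le C\|f\|_{L^2}^{3}\|f_x\|_{L^2}$, $\|f\|_{L^6}^6\le C_\ast\|f\|_{L^2}^{4}\|f_x\|_{L^2}^{2}$, and $|f(0)|^2\le\|f\|_{L^\infty}^2\le 2\|f\|_{L^2}\|f_x\|_{L^2}$. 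Since $\|u(t)\|_{L^2}=\|u_0\|_{L^2}$ is fixed, the quartic term and the $\delta$-term carry the powers $\|u_x(t)\|_{L^2}^{1}$ and $\|u_x(t)\|_{L^2}^{1}$, respectively, so they are \emph{subcritical}: for any $\varepsilon>0$ Young's inequality bounds each of them by $\varepsilon\|u_x(t)\|_{L^2}^2+C(\varepsilon,\|u_0\|_{L^2})$, irrespective of the signs of $\lambda_1$ and $Z$ (when those signs are favorable the terms may simply be discarded).

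The sextic term is the critical one, and here the two cases of the statement split. If $\lambda_2<0$, that term appears above with a good sign and can be dropped; choosing $\varepsilon$ small enough to absorb the subcritical terms into $\tfrac14\|u_x(t)\|_{L^2}^2$ then gives $\|u_x(t)\|_{L^2}^2\le 4|E(u_0)|+C(\|u_0\|_{L^2})$, a uniform bound with no restriction on the data, hence global existence. If $\lambda_2>0$, one has $\tfrac{\lambda_2}{6}\|u(t)\|_{L^6}^6\le\tfrac{\lambda_2 C_\ast}{6}\|u_0\|_{L^2}^4\,\|u_x(t)\|_{L^2}^2$, and this can be moved to the left-hand side precisely when $\|u_0\|_{L^2}$ is small enough that $\tfrac{\lambda_2 C_\ast}{3}\|u_0\|_{L^2}^4<1$; under this smallness hypothesis the previous argument again produces a uniform bound on $\|u_x(t)\|_{L^2}$. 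In both cases $\|u(t)\|_{H^1(\mathbb R)}$ stays bounded, the blow-up alternative is ruled out for finite $T_{\max}$, and global well-posedness follows. The main obstacle is exactly the $L^2$-criticality of the quintic nonlinearity in one dimension for $\lambda_2>0$: it is the sharp Gagliardo--Nirenberg constant $C_\ast$ that fixes the threshold on $\|u_0\|_{L^2}$ below which the energy remains coercive, while the $\delta$-interaction, being a relatively form-bounded lower-order perturbation, does not affect whether this coercivity holds.
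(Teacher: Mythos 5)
Your argument is correct and is essentially the paper's own proof: both rest on conservation of $Q$ and $E$, the one-dimensional Gagliardo--Nirenberg inequalities making the quartic and $\delta$-trace terms subcritical and the sextic term critical, and the resulting smallness condition on $\|u_0\|_{L^2}$ when $\lambda_2>0$ (with the sextic term simply discarded when $\lambda_2<0$). The only cosmetic difference is that you absorb the $Z|u(0,t)|^2$ term uniformly via the trace inequality, whereas the paper keeps it on the left and treats the signs of $Z$ separately.
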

\begin{proof} Without loss of generality we consider the case $\lambda_1=\lambda_2=1$ in (\ref{cachy12}). Thus for  proving our result, it is enough to show that the $H^1(\mathbb{R})$-norm of the solution  $u(t)$  has  a {\it a priori} bounded. Indeed, from the conservation of the quantity $Q$, it is clear that the quantity $||u(t)\|=\|u_0\|$. Next, we show that $||u_x(t)||^2-Z|u(0,t)|^2$ is bounded. From  Gagliardo-Nirenberg inequality, Young inequality and from (\ref{firstintegral3delta}),
\begin{equation}\label{espriridel}
\begin{aligned}
||u_x(t)||^2-Z|u(0,t)|^2
&=2E(u(t))+\frac{1}{2}||u(t)||^4_{L^4}+\frac{1}{3}||u(t)||^6_{L^6}\\
&\leq 2E(u(t))+\frac{\epsilon}{2}||u_x(t)||^{2}+\frac{C_\epsilon}{2}||u(t)||^{6}+\frac{C}{3}||u_x(t)||^{2}||u(t)||^{4}\\
&= 2E(u(t))+\frac{C_\epsilon}{2}||u_0||^6+\left(\frac{\epsilon}{2}+\frac{C}{3}||u_0||^4\right)||u_x(t)||^2,
\end{aligned}
\end{equation}
where $\epsilon$ is positive (small) and $C_\epsilon>0$.  Now, by the conservation of the quantities $E$, we have
\begin{equation}\label{castdel}
[1-h(||u_0||)]||u_x(t)||^2-Z|u(0,t)|^2\leq 2E(u_0)+4C_\epsilon [Q(u_0)]^3,
\end{equation}
where $h(x)=\frac{1}{2}\epsilon+\frac{C}{3}x^4$. So, for $\epsilon$ small, we choose $u_0$ satisfying the condition 
$$
\frac{1}{2}\epsilon+\frac{C}{3}||u_0||^4<1,
$$
and therefore 
\begin{equation}\label{estfindel}
||u_x(t)||^2-\frac{Z|u(0,t)|^2}{1-h(||u_0||)}\leq\frac{2E(u_0)+ 4C_\epsilon [Q(u_0)]^3}{1-h(||u_0||)}.
\end{equation}
Thus, for $Z<0$ follows  immediately that $
 ||u_x(t)||^2\leqq M(\|u_0\|, \|u_0\|_1, Z)$. Now, from the inequality  $|u(0, t)|^2\leqq 2\|u_x(t)\|\|u(t)\|$, we have 
 for $Z>0$ and  from Young inequality that for every $\epsilon>0$ exists $C_1=C(\epsilon, \|u_0\|, Z)>0$ such that
 $$
\frac{Z|u(0,t)|^2}{1-h(||u_0||)}\leqq \epsilon  ||u_x(t)||^2 +C_1 \| ||u(t)||^2
 $$
 Therefore, from (\ref{estfindel}) we deduce the following boundedness 
 $$
 ||u_x(t)||^2\leqq N(\|u_0\|,\|u_0\|_1, Z).
 $$

The case  $\lambda_1\neq 0$ and $\lambda_2<0$ is immediate from the estimative $||u_x(t)||^2-Z|u(0,t)|^2
\leqq 2E(u(t))+\frac{|\lambda_1|}{2}||u(t)||^4_{L^4}$. It finishes the proof.
\end{proof}

\begin{Remark} 
\begin{enumerate}

\item[i)] We note that the restriction about the size of the initial data in Theorem \ref{gwpdel} is essentially due to the ``one-dimensional critical exponent'' in the nonlinearity $ |u|^4u$.

\item[ii)] Theorem \ref{gwpdel} does not give us information about the global existence of solutions for any size of the initial data $u_0$ ( $\lambda_1\neq 0$ and $\lambda_2>0$) and so the possibility of a blow up behavior of solutions may  exist for  specific initial data $u_0$. Indeed,  classical variational arguments ensure that for $Z=0$, $\lambda_2=1$ and $\lambda_1\in \{-1, 0, 1\}$ we will obtain for $u_0\in  H^1(\mathbb R)$ satisfying   
$$
\|u_0\|<\|Q\|,
$$ 
 the property of global in time of the solutions for (\ref{deltasch1}), where $Q$ is the ground state of the mass critical problem in (\ref{deltasch1}) with $\lambda_1=0$. Recently, Le Coz\&Martel\&Rapha\"el in \cite{LMR} have showed  that for $\lambda_1=-1$ and $\|u_0\|=\|Q\|$, then the solution of (\ref{deltasch1}) is global and bounded in $H^1(\mathbb R)$. Moreover, for $\lambda_1=1$ and $\|u_0\|=\|Q\|$,  we obtain the  existence of finite blow up solutions.

\item[iii)] In Theorem \ref{oxaca} below, we show the existence of global solutions in $H^1(\mathbb R)$ for the attractive-attractive case in (\ref{cachy12}) with a  initial data $u_0$ close to  the orbit generated  by the profile  $\phi_{\omega, Z}$ for $Z>0$, and in $H_{even}^1(\mathbb R)$ for $Z>Z^*$.

\end{enumerate}
\end{Remark}

\section{Existence of standing waves}\label{standing} 

In this section we deal with the deduction of the explicit solutions in  (\ref{soldelta})  for the NLSCQ model (\ref{deltasch1}) with $\lambda_1>0$ and $\lambda_2\neq 0$. We consider the  cases, $Z=0$ and $Z\neq 0$. For $Z=0$ in (\ref{peak}), we have that $\phi\equiv \phi_\omega$ satisfies the nonlinear elliptic equation 
\begin{equation}\label{ordendoisz0}
\phi''+\omega\phi+\lambda_1\phi^{3}+\lambda_2\phi^{5}=0.
\end{equation}
By using a quadrature procedure and considering the boundary condition for the profile $\phi\to 0$ as $|x|\to \infty$, we obtain that
\begin{equation}\label{ord1z0}
[\phi']^2+\omega\phi^2+2\alpha\phi^{4}+\beta\phi^{6}=0,
\end{equation}
here, $\alpha=\lambda_1/4$, $\beta=\lambda_2/3$. In order to obtain an explicit solution of equation (\ref{ordendoisz0}), we will assume $\omega<0$ and $0<\phi$. Then, via the substitution $y=\phi^{2}$ in (\ref{ord1z0}) we deduce that
\begin{equation}\label{inttrans}
\frac{1}{2\sqrt{-\omega}}\bigintssss{\frac{dy}{y\sqrt{1+2\alpha \omega^{-1}y+\beta \omega^{-1}y^2}}}=x.
\end{equation}
Next, with $c$ a positive constant we have the formula
\begin{equation*}\label{inttransit}
\bigintssss{\frac{dy}{y\sqrt{1+2\alpha \omega^{-1}y+\beta \omega^{-1}y^2}}}=-\ln\left[c\left(\frac{\alpha y+\omega}{-y}+\frac{\sqrt{\beta \omega y^2+2\omega\alpha y +\omega^2}}{y}\right)\right]
\end{equation*}
 So, for  $c=1/\sqrt{\alpha^2-\beta \omega}$, and recalling that
$$
 \text{arcosh}(x)=\ln(x-\sqrt{x^2-1}),\hspace{0.5cm}\text{for all}\hspace{0.5cm}x\geq 1,  
$$ 
we can rewrite the integral in (\ref{inttrans}) as 
\begin{equation}\label{inttransdo}
\bigintssss{\frac{dy}{y\sqrt{1+2\alpha \omega^{-1}y+\beta \omega^{-1}y^2}}}=-\text{arcosh}\left(\frac{\alpha y+\omega}{-y\sqrt{\alpha^2-\beta \omega}}\right),
\end{equation}
therefore, a solution of  equation (\ref{ordendoisz0}) is given implicitly  by 
$$
-\text{arcosh}\left(\frac{\alpha \phi^{2}+\omega}{-\phi^{2}\sqrt{\alpha^2-\beta \omega}}\right)=2\sqrt{-\omega}x,
$$
or by the formula,
\begin{equation}\label{denominator}
\phi(x)=\left[\frac{-\omega}{\alpha+\sqrt{\alpha^2-\beta \omega}\text{ cosh}(2\sqrt{-\omega}x)}\right]^{\frac{1}{2}}=\left[-\frac{\alpha}{\omega}+\frac{\sqrt{\alpha^2-\beta \omega}}{-\omega}\cosh
\left(2\sqrt{-\omega} x\right)\right]^{-\frac{1}{2}},
\end{equation}
with $-\omega>0$ and $ \alpha^2-\beta \omega>0$. Now, we  note that for $\lambda_2>0$, the solution $\phi$ in (\ref{denominator}) is well defined for all $\omega<0$. For  $\lambda_2<0$, the solution $\phi$ is well defined for $\omega$ satisfying
$$
0<-\omega<-\frac{3\lambda_1^2}{16\lambda_2}.
$$

Next, we proceed to calculate the solutions of equation (\ref{peak}) when $Z\neq 0$. 
The following lemma shows us some of the properties that a solution $\phi\in H^1(\mathbb{R})$ of  (\ref{peak}) must satisfy.

\begin{Lemma}\label{regul} Let $\phi\in H^1(\mathbb{R})$ be a solution of (\ref{peak}), then, $\phi$ satisfies the following properties,
\begin{subequations}
\begin{align}
&\phi\in C^j(\mathbb{R}-\{0\})\cap C(\mathbb{R}),\text{ }\text{ } j=1,2.\label{reg}\\
&\phi''(x)+\omega\phi(x)+\lambda_1\phi^{3}(x)+\lambda_2\phi^{5}(x)=0,\hspace{0.6cm}\text{for all}\hspace{0.6cm} x\neq 0. \label{eqdifx}\\
&\phi'(0+)-\phi'(0-)=-Z\phi(0).\label{condsal}\\
&\phi'(x),\phi(x)\rightarrow 0, \hspace{0.6cm}\text{if}\hspace{0.3cm} |x|\rightarrow\infty.\label{complimitado}
\end{align}
\end{subequations}
\end{Lemma}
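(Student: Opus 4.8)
The plan is to read ``$\phi$ solves (\ref{peak})'' as: $\phi\in H^{1}(\mathbb R)$ and $\phi$ is a distributional solution of $\phi''+\omega\phi+Z\delta(x)\phi+\lambda_{1}\phi^{3}+\lambda_{2}\phi^{5}=0$ on $\mathbb R$, equivalently (testing against $v\in H^{1}(\mathbb R)$) the weak identity
\[
\int_{\mathbb R}\phi'v'\,dx-Z\phi(0)v(0)=\int_{\mathbb R}(\omega\phi+\lambda_{1}\phi^{3}+\lambda_{2}\phi^{5})\,v\,dx .
\]
This makes sense because the one-dimensional Sobolev embedding $H^{1}(\mathbb R)\hookrightarrow C_{b}(\mathbb R)$ already gives $\phi\in C(\mathbb R)\cap L^{\infty}(\mathbb R)$ with $\phi(x)\to 0$ as $|x|\to\infty$, so the trace $\phi(0)$ is defined and $Z\delta(x)\phi=Z\phi(0)\delta$. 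At this stage we have the continuity part of (\ref{reg}) and the decay of $\phi$ in (\ref{complimitado}).

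Next I would bootstrap away from the origin. Since $\delta$ is supported at $x=0$, on every open interval $I$ with $\overline I\subset\mathbb R\setminus\{0\}$ the function $\phi$ solves, in $\mathcal D'(I)$, the equation $\phi''=-\omega\phi-\lambda_{1}\phi^{3}-\lambda_{2}\phi^{5}$, whose right-hand side is continuous on $I$; hence $\phi''\in C(I)$ and, integrating twice, $\phi\in C^{2}(I)$, so that (\ref{eqdifx}) holds classically on $I$. As $I$ is arbitrary this yields (\ref{reg}) for $j=1,2$ and (\ref{eqdifx}). Moreover $\phi''$ is bounded on each of $(0,\varepsilon)$ and $(-\varepsilon,0)$ (because $\phi$ is bounded), so $\phi'$ is Lipschitz there and extends continuously to $x=0$ from each side; thus the one-sided limits $\phi'(0\pm)$ exist and are finite, and in particular $\phi\in H^{2}(\mathbb R\setminus\{0\})$.

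To get the jump relation (\ref{condsal}) I would take $v\in C_{c}^{\infty}((-\varepsilon,\varepsilon))$ in the weak identity above, split $\int_{-\varepsilon}^{\varepsilon}\phi'v'$ into integrals over $(-\varepsilon,0)$ and $(0,\varepsilon)$, and integrate by parts on each side using the now classical equation (\ref{eqdifx}); the interior terms cancel the right-hand side and one is left with $(\phi'(0-)-\phi'(0+)-Z\phi(0))\,v(0)=0$ for all such $v$, i.e. $\phi'(0+)-\phi'(0-)=-Z\phi(0)$. Finally, for the decay of $\phi'$ in (\ref{complimitado}) I would multiply (\ref{eqdifx}) by $\phi'$ and integrate on $(1,\infty)$ to obtain the first integral $[\phi']^{2}+\omega\phi^{2}+2\alpha\phi^{4}+\beta\phi^{6}\equiv c$ there, exactly as in (\ref{ord1z0}); since $\phi(x)\to 0$ this forces $[\phi'(x)]^{2}\to c$, and $c\neq 0$ would make $\phi'$ eventually of constant sign with $|\phi'|$ bounded below, hence $\phi$ unbounded, contradicting $\phi\in L^{2}$; so $c=0$ and $\phi'(x)\to 0$, and likewise on $(-\infty,-1)$. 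The only genuinely delicate step is the bookkeeping at the interface $x=0$ — choosing the weak formulation compatible with the extension domain (\ref{dain23}) and getting the sign of $Z$ right — while everything else is routine elliptic regularity and ODE asymptotics.
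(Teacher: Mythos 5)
Your proof is correct and follows essentially the same route as the paper: an elliptic bootstrap away from the origin for (\ref{reg}), (\ref{eqdifx}) and (\ref{complimitado}), plus an integration across $x=0$ (which you carry out rigorously as a distributional pairing with test functions supported near the origin) to obtain the jump condition (\ref{condsal}). Your version in fact supplies details the paper glosses over — the existence of the one-sided limits $\phi'(0\pm)$ via boundedness of $\phi''$ near $0$, and the first-integral argument for $\phi'(x)\to 0$ — and your reading of (\ref{peak}) as the distributional equation containing the term $Z\delta(x)\phi$, with the sign conventions of (\ref{eqdifx}), is exactly the interpretation the paper's own proof implicitly uses.
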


\begin{proof}
The proof of this lemma follows the ideas of the proof of Lemma 3.1 in \cite{FuJe}. The properties (\ref{reg}) and (\ref{complimitado}) are proved by a standard boostrap argument, namely, for all $\xi\in C_0^{\infty}(\mathbb{R}\setminus\{0\})$, the function $\xi\phi$ satisfies 
$$
(\xi\phi)''+\omega(\xi\phi)=\xi''\phi+2\xi'\phi'-\lambda_1\xi\phi^3-\lambda_2\xi\phi^5
$$
in the sense of distributions. Since the right hand side of the previous identity is in $L^2(\mathbb{R})$, then $\xi\phi\in H^2(\mathbb{R})$, that is to say, $\phi\in H^2(\mathbb{R}\setminus \{0\})\cap C^1(\mathbb{R}\setminus\{0\})$. The equation (\ref{eqdifx}) follows from the fact that $C_0^{\infty}(\mathbb{R}\setminus\{0\})$ is dense in $L^2(\mathbb{R})$. In relation to (\ref{condsal}), it is enough to ``integrate'' (\ref{peak}) from $-\epsilon$ to $\epsilon$,
$$
\int_{-\epsilon}^{\epsilon}\phi'' dx+\omega \int_{-\epsilon}^{\epsilon}\phi dx +Z\int_{-\epsilon}^{\epsilon}\delta(x)\phi dx+\int_{-\epsilon}^{\epsilon}\lambda_1\phi^3+\lambda_2\phi^5 dx=0,
$$
and  by doing $\epsilon\to 0$, we obtain that $\phi'(0+)-\phi'(0-)=-Z\phi(0)$.
\end{proof}

Now, the function 
\begin{equation}\label{esest}
\phi_{s}(x):=\phi(|x|-s),\hspace{0.5cm}s\in\mathbb{R}, 
\end{equation}
with $\phi$ given in (\ref{denominator}), satisfies all the properties of the previous lemma except possibly the jump condition (\ref{condsal}). So, to ensure that $\phi_s$ satisfies that type of condition, we proceed as follows, since $\phi_s$ is an even function, condition (\ref{condsal}) can be rewritten as
\begin{equation}\label{jump}
\phi_s'(0+)=-\frac{Z}{2}\phi_s(0),\hspace{0.5cm}\text{or equivalently,}\hspace{0.5cm}\phi'(s)=\frac{Z}{2}\phi(s).
\end{equation} 
Hence, from (\ref{denominator}), we obtain that
\begin{equation}\label{translat}
\frac{\sqrt{\alpha^2-\beta \omega}\text{ sinh}(2\sqrt{-\omega}s)}{\alpha+\sqrt{\alpha^2-\beta \omega}\text{ cosh}(2\sqrt{-\omega}s)}=\frac{-Z}{2\sqrt{-\omega}},
\end{equation}  
then, if we define $R:(-\infty,\infty)\rightarrow (-1,1)$ by
\begin{equation}\label{Erre}
R(s)=\frac{\sqrt{\alpha^2-\beta \omega}\text{ sinh}(2\sqrt{-\omega}s)}{\alpha+\sqrt{\alpha^2-\beta \omega}\text{ cosh}(2\sqrt{-\omega}s)}, 
\end{equation}
then, we have that $R$ is an odd, increasing diffeomorphism between the intervals $(-\infty,\infty)$ and (-1,1). In particular, from the expression (\ref{translat}), we conclude that  $
\frac{Z^2}{4}<-\omega $ and 
\begin{equation}\label{ese}
s=R^{-1}\left(\frac{-Z}{2\sqrt{-\omega}}\right).
\end{equation}
Finally, from (\ref{denominator}), (\ref{esest}) and (\ref{ese}), we can conclude that for $\alpha=\lambda_1/4$, $\beta=\lambda_2/3$, the function $\phi_{\omega,Z}$ given by
\begin{equation}\label{numeratorwithZ1}
\phi_{\omega,Z}(x)=\left[\frac{\alpha}{-\omega}+\frac{\sqrt{\alpha^2-\beta \omega}}{-\omega}\text{ cosh}\left(2\sqrt{-\omega}\left(|x|+R^{-1}\left(\frac{Z}{2\sqrt{-\omega}}\right)\right)\right)\right]^{-\frac{1}{2}}
\end{equation}
is a solution of  equation (\ref{peak}) providing that:
\begin{enumerate}
\item[i)] for  $\lambda_1, \lambda_2 >0$: $\frac{Z^2}{4}<-\omega$.
\item[ii)] for  $\lambda_1>0$, $\lambda_2 <0$:  $\frac{Z^2}{4}<-\omega<-\frac{3\lambda_ 1^2}{16\lambda_2}$.
\end{enumerate}
We observe that if $Z=0$ in the previous formula, we recover the function $\phi$ given in (\ref{denominator}),  that is, $\phi_{\omega,0}=\phi$. 

Thus we can establish the following existence result of peak standing-wave solutions  for (\ref{deltasch1}).

\begin{Theorem}\label{solution} 
\begin{enumerate}
\item[i)] Let $\lambda_1, \lambda_2 >0$ in (\ref{peak}). Then for $Z\in \mathbb R$ and $\omega<0$ such that $-\omega>\frac{Z^2}{4}$ we have a smooth family of solutions for (\ref{peak}), $\omega\to \phi_{\omega, Z}$ given by the formula in (\ref{numeratorwithZ1}). Moreover, the mapping $Z\to \phi_{\omega, Z}$ is a real analytic function.

\item[ii)] Let $\lambda_1>0$ and $\lambda_2 <0$ in (\ref{peak}). Then for $Z\in \mathbb R$ and $\omega<0$ satisfying 
$$
\frac{Z^2}{4}<-\omega<-\frac{3\lambda_ 1^2}{16\lambda_2},
$$
we have a smooth family of solutions for (\ref{peak}), $\omega\to \phi_{\omega, Z}$ given by the formula in (\ref{numeratorwithZ1}). Moreover, the mapping $Z\in (-\frac{\sqrt{3}\lambda_1}{2\sqrt{-\lambda_2}}, \frac{\sqrt{3}\lambda_1}{2\sqrt{-\lambda_2}})\to \phi_{\omega, Z}$ is a real analytic function.
\end{enumerate}
\end{Theorem}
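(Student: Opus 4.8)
Most of the work has already been carried out above: the explicit formula \eqref{numeratorwithZ1}, Lemma \ref{regul}, and the jump-condition computation \eqref{jump}--\eqref{ese} show that, under restrictions (i) or (ii), the function $\phi_{\omega,Z}$ lies in $H^1(\mathbb R)\cap H^2(\mathbb R\setminus\{0\})$, satisfies the jump condition \eqref{condsal} (so $\phi_{\omega,Z}\in D(A_Z)$), and solves \eqref{peak}; indeed $\phi_{\omega,Z}(x)=\phi(|x|+\sigma(\omega,Z))$ with $\phi$ as in \eqref{denominator} and $\sigma(\omega,Z)=R^{-1}\!\big(Z/(2\sqrt{-\omega})\big)$, and $\phi$ decays exponentially at $\pm\infty$. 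What remains is (a) to identify the parameter ranges on which the construction is legitimate, (b) to prove $\omega\mapsto\phi_{\omega,Z}$ is $C^\infty$, and (c) to prove $Z\mapsto\phi_{\omega,Z}$ is real-analytic, both as maps into the Banach space $H^1(\mathbb R)$.

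For (a): when $\lambda_1,\lambda_2>0$ one has $\alpha>0$, $\beta>0$, so $\alpha^2-\beta\omega>0$ for every $\omega<0$ and the bracket in \eqref{numeratorwithZ1} is strictly positive; moreover $R\colon\mathbb R\to(-1,1)$ is an increasing real-analytic diffeomorphism, and $Z/(2\sqrt{-\omega})\in(-1,1)$ precisely when $Z^2/4<-\omega$. When $\lambda_1>0$, $\lambda_2<0$ a short computation shows that $\alpha^2-\beta\omega>0$ is equivalent to $-\omega<-3\lambda_1^2/(16\lambda_2)$, and that over this range $2\sqrt{-\omega}$ runs up to $\sqrt3\,\lambda_1/(2\sqrt{-\lambda_2})$; hence for each admissible $\omega$ the values of $Z$ with $Z/(2\sqrt{-\omega})\in(-1,1)$ are exactly those with $|Z|<2\sqrt{-\omega}$, and these fill the interval $|Z|<\sqrt3\,\lambda_1/(2\sqrt{-\lambda_2})$ as $\omega$ varies. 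On these open sets every quantity in \eqref{numeratorwithZ1} is finite and the formula yields the claimed solution.

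For (b) and (c) I would separate the finite-dimensional dependence from the passage to $H^1(\mathbb R)$. The map $(\omega,s)\mapsto R_\omega(s)$ (with $\alpha,\beta$ fixed) is jointly real-analytic on $\{\omega<0,\ \alpha^2-\beta\omega>0\}\times\mathbb R$ and has $\partial_sR_\omega>0$, so the analytic inverse function theorem gives that $(\omega,u)\mapsto R_\omega^{-1}(u)$ is real-analytic on $\{\omega<0,\ \alpha^2-\beta\omega>0\}\times(-1,1)$. Composing with $\omega\mapsto Z/(2\sqrt{-\omega})$ for fixed $Z$, respectively $Z\mapsto Z/(2\sqrt{-\omega})$ for fixed $\omega$, shows $\sigma(\omega,Z)$ is $C^\infty$ in $\omega$ and real-analytic in $Z$. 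Writing $\phi_{\omega,Z}(x)=G\big(|x|,\omega,\sigma(\omega,Z)\big)$, where $G(r,\omega,\sigma)$ is the bracket of \eqref{numeratorwithZ1} with $|x|+R^{-1}(\cdot)$ replaced by $r+\sigma$ (jointly smooth, and real-analytic in $(\omega,\sigma)$, since the bracket stays positive), one obtains that $\omega\mapsto\phi_{\omega,Z}(x)$ and $Z\mapsto\phi_{\omega,Z}(x)$ are, pointwise in $x$, smooth and real-analytic respectively.

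The one genuinely nontrivial point, and the step I expect to be the main obstacle, is upgrading this pointwise-in-$x$ statement to an $H^1(\mathbb R)$-valued one. The key is a locally uniform exponential-decay estimate: on every compact subset $K$ of the parameter domain there are $C,\kappa>0$ (with $\kappa$ bounded below by a positive constant on $K$ and $\sigma$ bounded on $K$) such that $\phi_{\omega,Z}$, together with all of its $x$-derivatives and all of its partial derivatives in $\omega$ and $Z$ obtained by differentiating $G$ and $\sigma$ through the chain rule, is bounded by $Ce^{-\kappa|x|}$ uniformly for parameters in $K$. Since these formal derivatives are continuous in the parameters and dominated, the standard criterion for differentiability of Banach-space-valued maps gives that $\omega\mapsto\phi_{\omega,Z}$ is $C^\infty$ from the admissible $\omega$-interval into $H^1(\mathbb R)$, in fact into $H^1(\mathbb R)\cap H^2(\mathbb R\setminus\{0\})$. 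For real-analyticity in $Z$ with $\omega$ fixed, I would extend $Z$ to a small complex neighborhood of the admissible interval; there the same formula still defines an $H^1(\mathbb R)$-valued map, which is weakly holomorphic (by the pointwise-in-$x$ holomorphy together with the uniform exponential bound, via dominated convergence) and locally bounded, hence holomorphic; restricting to real $Z$ shows $Z\mapsto\phi_{\omega,Z}$ is real-analytic. Everything else --- the inverse function theorem and the chain-rule differentiations --- is routine.
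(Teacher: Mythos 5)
Your proposal is correct and follows essentially the same route as the paper: the quadrature construction of the $Z=0$ profile, the translation $\phi(|x|+R^{-1}(Z/2\sqrt{-\omega}))$ forced by the jump condition, and the parameter restrictions coming from $\alpha^2-\beta\omega>0$ and $|Z|/(2\sqrt{-\omega})<1$ constitute the paper's (implicit) proof of this theorem, and the regularity in $\omega$ and $Z$ is likewise read off the explicit formula and the analyticity of $R^{-1}$. The only material you add is the careful upgrade from pointwise-in-$x$ to $H^1(\mathbb{R})$-valued smoothness/analyticity via locally uniform exponential decay and weak holomorphy, a step the paper leaves implicit; this is a correct and worthwhile supplement rather than a different approach.
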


Figure \ref{avr-memmap}  below shows the profile of $\phi_{\omega, Z}$ in (\ref{numeratorwithZ1}) in  the case attractive-attractive ($\lambda_1=\lambda_2=1$). The picture of the profiles in  the case attractive-repulsive is the same.

\begin{figure}[htb]
  \centering
  
    \subfigure[$\phi_{\omega,Z}$: $\omega=-3$, $Z=-2$.]{%\label{peakn}%
      \includegraphics[scale=0.8]{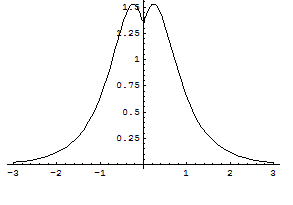}}
      \qquad
    \subfigure[$\phi_{\omega,Z}$: $\omega=-3$, $Z=2$.]{%\label{peakp}%
      \includegraphics[scale=0.8]{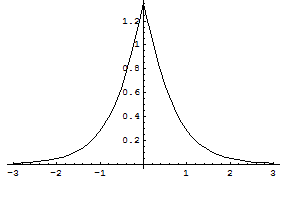}}
  \caption{The peak solutions $\phi_{\omega,Z}$ for $Z<0$ and $Z>0$.}
  \label{avr-memmap}
\end{figure}

\section{Spectral properties}\label{Spectro}

This section is dedicated to the study of the spectral properties of the operators associated to  the second variation  of the action functional $S_{\omega, Z}=E-\omega Q$ at the profile $\phi_{\omega}$ defined in (\ref{soldelta}). We note initially that $\phi_{\omega}$ is a critical point of $S_{\omega, Z}$. Next, we determine $S''_{\omega, Z}(\phi_{\omega})$. It consider
$u, v\in H^1(\mathbb R)$ such that $u=u_1+iu_2,\,\,v=v_1+iv_2$. Then,  we get 
\begin{equation*}\label{q_form}
\begin{split}
&S_{\omega, Z}''(\phi_{\omega})(u, v)=\int\limits_{\mathbb{R}}u'_1v'_1dx-\int\limits_{\mathbb{R}}u_1 v_1(\omega + 3\lambda_1\phi^2_{\omega} + 5 \lambda_2\phi^4_{\omega})dx -Z u_1(0)v_1(0) \\
&+\int\limits_{\mathbb{R}}u'_2v'_2dx-\int\limits_{\mathbb{R}}u_2 v_2(\omega + \lambda_1\phi^2_{\omega} + \lambda_2\phi^4_{\omega})dx -\gamma u_2(0)v_2(0).
\end{split}
\end{equation*}

Therefore, $S_{\omega, Z}''(\phi_{\omega})(u, v)$ can be formally rewritten as 
\begin{equation}\label{SBB}
S_{\omega, Z}''(\phi_{\omega})(u, v)=B_{1,Z}(u_1,v_1)+B_{2, Z}(u_2,v_2),
\end{equation}
where 
\begin{equation}\label{spec14}
\begin{split}
&B_{1, Z}(f,g)=\int\limits_{\mathbb{R}}f'g'dx-\int\limits_{\mathbb{R}}f g (\omega + 3\lambda_1\phi^2_{\omega} + 5\lambda_2 \phi^4_{\omega})dx-Z f(0)g(0),\\
&B_{2,Z}(f,g)=\int\limits_{\mathbb{R}}f'g'dx+\int\limits_{\mathbb{R}}f g (\omega + \lambda_1\phi^2_{\omega} +\lambda_2 \phi^4_{\omega})dx-Z f(0)g(0),
\end{split}
\end{equation} 
and $D(B_{j, Z})=H^1(\mathbb R)\times H^1(\mathbb R)$, $j\in\{1,2\}$.
Note that the forms $B_{j, Z}$, $j\in\{1,2\},$ are bilinear 
 bounded from below and closed. Therefore, by  the First Representation Theorem (see  \cite[Chapter VI, Section 2.1]{Kato}), they define operators $\mathcal L_{1, Z}$ and $\mathcal L_{2, Z}$ such that for $ j\in\{1,2\}$
 \begin{equation}\label{opera}
 \left\{ 
 \begin{split}
 &D(\mathcal L^\gamma_{j})=\{v\in H^1(\mathbb R): \exists w\in   L^2(\mathbb{R})\; s.t.\; \forall z\in H^1(\mathbb R), \;B_{j, Z}(v,z)=(w,z) \},\\
& \mathcal L_{j, Z}v=w.
 \end{split}\right.
\end{equation}
In the following theorem we  describe operators $\mathcal L_{1, Z}$ and $\mathcal L_{2, Z}$ in more explicit form.  The proof of this theorem follows the same lines as in Le Coz {\it et al.} \cite{LeCoz}.
\begin{Theorem}\label{repres}
The operators $\mathcal L_{1, Z}$ and $\mathcal L_{2, Z}$ determined in (\ref{opera}) are given by 
\begin{equation}\label{L12}
 \begin{split}
 \mathcal L_{1, Z}=-\frac{d^2}{dx^2} -\omega -3\lambda_1\phi^2_{\omega} - 5\lambda_2 \phi^4_{\omega}, \quad
 \mathcal L_{2, Z}=-\frac{d^2}{dx^2} -\omega -\lambda_1\phi^2_{\omega} - \lambda_2\phi^4_{\omega}
 \end{split}
\end{equation} 
on the domain $D_Z:= D\left({\cal L}_{i,Z}\right)=\left\{g\in H^1(\mathbb{R})\cap H^2(\mathbb{R}-\left\{0\right\})|g'(0+)-g'(0-)=-Zg(0) \right\}$.
\end{Theorem}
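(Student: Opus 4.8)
The slickest route is to read $B_{j,Z}$ as a bounded form-perturbation of the quadratic form of the $\delta$-operator $A_Z$ recalled in Section~\ref{boacolh}. Writing $q_1=\omega+3\lambda_1\phi_\omega^2+5\lambda_2\phi_\omega^4$ and $q_2=\omega+\lambda_1\phi_\omega^2+\lambda_2\phi_\omega^4$, one has for all $f,g\in H^1(\mathbb R)$ that $B_{j,Z}(f,g)=\mathcal B^0_Z(f,g)-\int_{\mathbb R}q_j\,f g\,dx$, where $\mathcal B^0_Z(f,g)=\int_{\mathbb R}f'g'\,dx-Zf(0)g(0)$ is exactly the closed form on $H^1(\mathbb R)$ whose associated operator (via the First Representation Theorem) is $A_Z$ with domain $D(A_Z)=D_Z$. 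Since $\phi_\omega\in L^\infty(\mathbb R)$ (it decays exponentially by (\ref{numeratorwithZ1})), the symmetric bilinear form $(f,g)\mapsto\int_{\mathbb R}q_j\,f g\,dx$ is bounded on $H^1(\mathbb R)$, so the operator it induces is the bounded self-adjoint multiplication operator by $q_j$. Adding a bounded self-adjoint operator does not alter the operator domain, hence $\mathcal L_{j,Z}=A_Z-q_j$ with $D(\mathcal L_{j,Z})=D_Z$, which is precisely (\ref{L12}).

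For a self-contained verification along the lines of \cite{LeCoz}, I would instead prove the two inclusions directly. For $D_Z\subseteq D(\mathcal L_{j,Z})$: take $v\in D_Z$ and $z\in H^1(\mathbb R)$, split $\int_{\mathbb R}v'z'\,dx$ over $(-\infty,0)$ and $(0,\infty)$, and integrate by parts on each half-line; the interior terms assemble into $\int_{\mathbb R}(-v'')z\,dx$ and the boundary contributions at the origin amount to $\big(v'(0-)-v'(0+)\big)z(0)$, which cancels the term $-Zv(0)z(0)$ in $B_{j,Z}$ thanks to the jump condition. Since $v\in H^2(\mathbb R\setminus\{0\})$ and $q_j\in L^\infty$, the function $-v''-q_jv$ lies in $L^2(\mathbb R)$, so $B_{j,Z}(v,z)=(-v''-q_jv,z)$ for all $z$ gives $v\in D(\mathcal L_{j,Z})$ with $\mathcal L_{j,Z}v=-v''-q_jv$. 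Conversely, for $v\in D(\mathcal L_{j,Z})$ with $w:=\mathcal L_{j,Z}v$, testing $B_{j,Z}(v,z)=(w,z)$ against $z\in C_0^\infty(\mathbb R\setminus\{0\})$ yields $-v''=w+q_jv$ in $\mathcal D'(\mathbb R\setminus\{0\})$, whence $v\in H^2(\mathbb R\setminus\{0\})$ by elliptic regularity; then repeating the integration by parts against an arbitrary $z\in H^1(\mathbb R)$ and comparing with $(w,z)$ collapses the identity to $\big[v'(0-)-v'(0+)-Zv(0)\big]z(0)=0$, and since $z(0)$ is free this forces $v'(0+)-v'(0-)=-Zv(0)$, i.e. $v\in D_Z$.

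The only genuinely delicate point is the interior elliptic-regularity step in the reverse inclusion — upgrading the distributional identity $-v''=w+q_jv$ together with $v\in H^1(\mathbb R)$ to $v\in H^2(\mathbb R\setminus\{0\})$, with uniform control near $\pm\infty$ — and this is exactly where the boundedness of $\phi_\omega$, hence of $q_j$, is used; everything else is bookkeeping of boundary terms. I note that the form-perturbation argument in the first paragraph bypasses this step altogether, since the domain is then read off directly from $D(A_Z)$.
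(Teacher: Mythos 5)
Your proof is correct and, in fact, more complete than what the paper provides: the paper's entire ``proof'' of Theorem~\ref{repres} is a one-line deferral to Le~Coz \emph{et al.}~\cite{LeCoz}, whose argument is precisely the direct two-inclusion verification you carry out in your second paragraph (half-line integration by parts, cancellation of the boundary term at the origin against $-Zv(0)z(0)$ via the jump condition, and recovery of the jump condition from the vanishing of $\bigl[v'(0-)-v'(0+)-Zv(0)\bigr]z(0)$ for arbitrary $z\in H^1(\mathbb R)$). Your first paragraph is a genuinely different and shorter route: you split $B_{j,Z}$ as the closed form of $A_Z$ plus the bounded symmetric form $(f,g)\mapsto-\int q_j fg\,dx$ with $q_j\in L^\infty(\mathbb R)$, and invoke the stability of the operator domain under bounded self-adjoint form perturbations to read off $D(\mathcal L_{j,Z})=D(A_Z)=D_Z$ directly from the facts about $A_Z$ recalled in Section~\ref{boacolh}. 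The form-perturbation argument buys brevity and avoids the regularity bookkeeping entirely, at the cost of taking as input the (standard, but nontrivial) identification of $A_Z$ as the operator of the form $\int f'g'\,dx-Zf(0)g(0)$ on $H^1(\mathbb R)$; the direct verification is self-contained and is the one the paper implicitly intends. One cosmetic remark: the ``elliptic regularity'' step you flag as delicate is really just the observation that $v''=-(w+q_jv)\in L^2$ on each half-line forces $v\in H^2(\mathbb R\setminus\{0\})$, so it is no more delicate than the rest; and in the forward inclusion you should (as you implicitly do) note that $v\in H^2$ of each half-line guarantees $v'(x)\to0$ as $|x|\to\infty$, so no boundary terms survive at infinity.
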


 Next, we proceed with the more  delicate part of our theory, it which is associated to finding the Morse index of the self-adjoint operators ${\cal L}_{1,Z}$ and ${\cal L}_{2, Z}$ defined in Theorem \ref{repres}. Here we will consider the parameters $\lambda_1, \lambda_2$, $\omega$ and $Z$ such that satisfy the relations in Theorem \ref{solution}. For finding this number we will use perturbation theory and we will follow the ideas in Le Coz {\it et.al} \cite{LeCoz}. We also give an alternative approach based in the extension theory for symmetric operators of von Neumman and Krein established recently by Angulo\&Goloshchapova (\cite{A2G}, \cite{A3G})  for finding this index at least in the case $Z>0$ (see Appendix below).

%And the second one, as a family of self-adjoint operators with domain $D_Z$, associated to the family of closed, symmetric, densely defined bilinear forms:
%$$
%Q_{Z}(u,v)=\int_{-\infty}^{\infty}u_xv_x\text{ }dx-Zu(0)v(0),\hspace{0.5cm}\text{where},\hspace{0.5cm}u,v\in D(Q_Z)=H^1(\mathbb{R}).
%$$
%For more details see the chapter VI of \cite{Kato}. 
\begin{Theorem}\label{espil1} \textit{Spectral properties of ${\cal L}_{2,Z}$}. For $\omega<0$ and $Z$ satisfying  $\omega+Z^2/4<0$,  the self-adjoint linear operator ${\cal L}_{2,Z}:D_Z\rightarrow L^2(\mathbb{R})$ given in (\ref{L12}) has zero as a simple eigenvalue and $\phi_{\omega,Z}$ as its corresponding positive eigenfunction.  The rest of the spectrum is positive and away from zero. Additionally, $\sigma_{ess}({\cal L}_{2,Z})=[-\omega,\infty)$.
\end{Theorem}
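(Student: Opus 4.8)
The plan is to exploit the fact that $\phi_{\omega,Z}$ is an explicit positive solution of \eqref{peak}, so that differentiating that equation produces an element of the kernel of $\mathcal{L}_{2,Z}$. First I would observe that away from the origin, $\mathcal{L}_{2,Z}\phi_{\omega,Z}=0$ is exactly equation \eqref{eqdifx} rewritten (indeed $\mathcal{L}_{2,Z}=-\partial_x^2-\omega-\lambda_1\phi_\omega^2-\lambda_2\phi_\omega^4$ annihilates $\phi_\omega$ precisely because $\phi_\omega$ solves \eqref{peak} pointwise for $x\neq 0$). Then I must check that $\phi_{\omega,Z}\in D_Z$, i.e. that it satisfies the jump condition \eqref{condsal}; but this is how $\phi_{\omega,Z}$ was constructed in Section~\ref{standing} (see \eqref{jump}–\eqref{ese}), so $0$ is genuinely an eigenvalue of the self-adjoint operator $\mathcal{L}_{2,Z}$ with eigenfunction $\phi_{\omega,Z}>0$.

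Next I would pin down the essential spectrum: since $\phi_{\omega,Z}$ and all its powers decay exponentially as $|x|\to\infty$ (by \eqref{complimitado} and the explicit formula \eqref{soldelta}), $\mathcal{L}_{2,Z}$ is a relatively compact perturbation of $A_Z-\omega$, and by the facts recalled after \eqref{dain23} (namely $\Sigma_{ess}(A_Z)=[0,\infty)$), Weyl's theorem gives $\sigma_{ess}(\mathcal{L}_{2,Z})=[-\omega,\infty)$. Since $-\omega>0$, the value $0$ lies strictly below the essential spectrum, hence is an isolated eigenvalue of finite multiplicity; in particular all of $\sigma(\mathcal{L}_{2,Z})\setminus\{0\}$ is bounded away from $0$ once we know $0$ is not embedded and no other eigenvalue accumulates at it.

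The heart of the matter — and the step I expect to be the main obstacle — is showing that $0$ is a \emph{simple} eigenvalue and that there are \emph{no negative} eigenvalues, i.e. that $\phi_{\omega,Z}$ is the ground state of $\mathcal{L}_{2,Z}$. On the half-lines $x>0$ and $x<0$, $\phi_{\omega,Z}$ is strictly positive, so by Sturm–Liouville theory it is the (nodeless) ground state of the corresponding Dirichlet/Neumann problems; the content is to combine the two half-line analyses across the $\delta$-interaction at $0$. I would argue as follows: suppose $\psi\in D_Z$ with $\mathcal{L}_{2,Z}\psi=\mu\psi$, $\mu\leq 0$. Writing $\psi=\phi_{\omega,Z}\,v$ (valid since $\phi_{\omega,Z}>0$ everywhere, including at $0$), a standard computation turns the eigenvalue equation on each half-line into $-(\phi_{\omega,Z}^2 v')'=\mu\,\phi_{\omega,Z}^2 v$, and the jump condition \eqref{condsal} for both $\psi$ and $\phi_{\omega,Z}$ forces $\phi_{\omega,Z}(0)^2\big(v'(0+)-v'(0-)\big)=0$, i.e. $v'$ has no jump at $0$; hence $\phi_{\omega,Z}^2 v'$ is continuous on all of $\mathbb{R}$. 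Multiplying by $v$ and integrating over $\mathbb{R}$ gives $\int \phi_{\omega,Z}^2 |v'|^2 = \mu\int \phi_{\omega,Z}^2 |v|^2\leq 0$, forcing $v$ constant and $\mu=0$. Thus $\mu=0$ and $\psi$ is a multiple of $\phi_{\omega,Z}$: there are no negative eigenvalues and the kernel is one-dimensional. Finally, combining with the essential-spectrum computation, the rest of the spectrum is positive and away from zero, which completes the proof. The delicate points to get right are the decay/relative-compactness justification for Weyl's theorem and the boundary-term bookkeeping at $x=0$ when integrating by parts; alternatively, one may invoke the extension-theoretic Morse-index machinery of Angulo–Goloshchapova cited after Theorem~\ref{repres} to conclude that $n(\mathcal{L}_{2,Z})=0$ directly. \cqd
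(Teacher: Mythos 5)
Your proposal is correct, and for the first and last assertions (that $\phi_{\omega,Z}\in D_Z$ is a positive eigenfunction for the eigenvalue $0$, and that Weyl's theorem applied to the decaying perturbation of $A_Z-\omega$ gives $\sigma_{ess}(\mathcal L_{2,Z})=[-\omega,\infty)$) it coincides with the paper's argument. Where you genuinely diverge is on the crux — that $0$ is simple and is the bottom of the spectrum. The paper disposes of this in one line by citing the Sturm--Liouville oscillation theory extended to operators with point interaction developed in Angulo\&Goloshchapova \cite{A2G, A3G}: a positive eigenfunction of such an operator is automatically the nondegenerate ground state. You instead give a self-contained ground-state (Jacobi) factorization: writing $\psi=\phi_{\omega,Z}v$, checking that the $\delta$-jump conditions for $\psi$ and for $\phi_{\omega,Z}$ cancel so that $\phi_{\omega,Z}^2v'$ is continuous at the origin, and integrating by parts to get $\int\phi_{\omega,Z}^2|v'|^2=\mu\int\phi_{\omega,Z}^2|v|^2\le 0$, which forces $\mu=0$ and $v$ constant. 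This is exactly the mechanism the paper itself deploys in the Appendix (equations \eqref{NLSL11}--\eqref{NLSL12}) for the minimal operator underlying $\mathcal L_{1,Z}$, only there with $\phi'_{\omega,Z}$ in place of $\phi_{\omega,Z}$; so your route makes Theorem \ref{espil1} independent of the external oscillation-theory references at the cost of the boundary-term bookkeeping you correctly flag (vanishing of the Wronskian-type term $\left(\psi'\phi_{\omega,Z}-\psi\phi'_{\omega,Z}\right)\overline{\psi}/\phi_{\omega,Z}$ at infinity, which holds since eigenfunctions with $\mu\le 0<-\omega$ decay at least as fast as $\phi_{\omega,Z}$). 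Both approaches are sound; the paper's is shorter but leans on cited machinery, while yours is elementary and explicit.
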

\begin{proof} From (\ref{peak}) follows ${\cal L}_{2,Z}(\phi_{\omega,Z})=0$. Thus, since $\phi_{\omega,Z}>0$ we obtain from  the Sturm-Liouville oscillation theory extended to  operators with point interaction in  Angulo\&Goloshchapova \cite{A2G, A3G} that zero is a simple isolated eigenvalue, the remains of the spectrum is contained in $[\delta, +\infty)$ for $\delta>0$. Moreover,  from Weyl's theorem (see Reed\&Simon \cite{Reed}) we obtain the affirmation on the essential spectrum.
\end{proof}

Now, we study the kernel of ${\cal L}_{1,Z}$ for $Z\neq 0$.

\begin{Lemma}\label{Kert} For $Z\neq 0$ and  $\lambda_1, \lambda_2$ satisfying the conditions in Theorem \ref{solution}, the kernel of ${\cal L}_{1,Z}$ is trivial.
\end{Lemma}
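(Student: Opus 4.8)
The plan is to argue entirely at the level of ordinary differential equations, using that $\phi_{\omega,Z}'$ is, up to scalars, the only $L^2$ solution of $\mathcal L_{1,Z}u=0$ on each half-line, while it fails the interface conditions at the origin when $Z\neq 0$. Suppose $f\in D_Z$ satisfies $\mathcal L_{1,Z}f=0$. On each of $(0,\infty)$ and $(-\infty,0)$, $f$ solves the linear equation $f''+\big(\omega+3\lambda_1\phi_{\omega,Z}^2+5\lambda_2\phi_{\omega,Z}^4\big)f=0$, whose coefficient converges exponentially fast to the constant $-\omega>0$, since $\phi_{\omega,Z}$ and its derivatives decay exponentially. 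By standard asymptotic ODE theory, on each half-line the solution space splits into an exponentially growing and an exponentially decaying branch; because $f\in H^1(\mathbb R)\subset L^2(\mathbb R)$, each restriction $f|_{(0,\infty)}$, $f|_{(-\infty,0)}$ must lie in the decaying branch. On the other hand, differentiating the profile equation (\ref{eqdifx}) shows that $\psi:=\phi_{\omega,Z}'$ satisfies $\mathcal L_{1,Z}\psi=0$ on each half-line; it is not identically zero and decays exponentially, so it spans each (one-dimensional) decaying branch. Hence $f=c_+\psi$ on $(0,\infty)$ and $f=c_-\psi$ on $(-\infty,0)$ for constants $c_\pm$.

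Next I would impose the matching and jump conditions at the origin. Writing $\phi_{\omega,Z}(x)=\phi(|x|+a)$ with $\phi$ the even, decaying profile in (\ref{denominator}) and $a=R^{-1}\!\big(Z/(2\sqrt{-\omega})\big)$, a direct computation gives $\psi(0\pm)=\pm\phi'(a)$ and $\psi'(0+)=\psi'(0-)=\phi''(a)$. Since $Z\neq 0$ we have $a\neq 0$, and as $\phi$ is strictly monotone away from its maximum at the origin, $\phi'(a)\neq 0$. Continuity of $f$ at $0$ (valid since $f\in H^1(\mathbb R)\subset C(\mathbb R)$) forces $c_+\phi'(a)=-c_-\phi'(a)$, hence $c_-=-c_+$. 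Feeding this into the jump condition $f'(0+)-f'(0-)=-Zf(0)$ yields $2c_+\phi''(a)=-Zc_+\phi'(a)$; combining this with the jump relation $2\phi'(a)=-Z\phi(a)$ satisfied by $\phi_{\omega,Z}$ (cf.\ (\ref{jump})), if $c_+\neq 0$ we are forced to have $\phi''(a)\phi(a)=\phi'(a)^2$.

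It then remains to rule this identity out. Using (\ref{eqdifx}) to express $\phi''$ and the first integral (\ref{ord1z0}) to express $(\phi')^2$, one finds
\[
\phi''\phi-(\phi')^2=-\tfrac{\lambda_1}{2}\phi^4-\tfrac{2\lambda_2}{3}\phi^6=-\phi^4\Big(\tfrac{\lambda_1}{2}+\tfrac{2\lambda_2}{3}\phi^2\Big).
\]
When $\lambda_1,\lambda_2>0$ this is strictly negative, so $\phi''(a)\phi(a)\neq\phi'(a)^2$ and we are done. When $\lambda_1>0$, $\lambda_2<0$, the explicit value $\phi(0)^2=\frac{-\omega}{\alpha+\sqrt{\alpha^2-\beta\omega}}$ together with the restriction $-\omega<-\frac{3\lambda_1^2}{16\lambda_2}$ (equivalently $\beta\omega<\alpha^2$, which is also what makes $\sqrt{\alpha^2-\beta\omega}$ real) gives $\phi(a)^2\le\phi(0)^2<\frac{-\omega}{\alpha}<\frac{\alpha}{|\beta|}=\frac{3\lambda_1}{4|\lambda_2|}$, so the bracket $\tfrac{\lambda_1}{2}+\tfrac{2\lambda_2}{3}\phi(a)^2$ is positive and again $\phi''(a)\phi(a)-\phi'(a)^2<0$. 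In either case we conclude $c_+=0$, hence $c_-=0$, hence $f\equiv 0$, so $\ker\mathcal L_{1,Z}=\{0\}$.

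The main obstacle is the attractive–repulsive subcase of the last step: establishing $\phi(a)^2<\frac{3\lambda_1}{4|\lambda_2|}$ is exactly where the hypothesis $-\omega<-\frac{3\lambda_1^2}{16\lambda_2}$ is genuinely used, and it must be traced carefully through the explicit formula for $\phi$. The ODE-splitting argument of the first paragraph is routine but should be stated with care (including the observation that $\psi$ is nonzero on each half-line). Finally, the role of $Z\neq 0$ is sharp: it is precisely the condition $a\neq 0$, i.e.\ $\phi'(a)\neq 0$, that makes the matching condition informative — for $Z=0$ one recovers the translation mode $\phi'\in\ker\mathcal L_{1,0}$, and the kernel is no longer trivial.
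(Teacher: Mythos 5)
Your proposal is correct and takes essentially the same route as the paper: both reduce $f$ to scalar multiples of $\phi_{\omega,Z}'$ on each half-line, combine continuity with the jump condition to force the identity $\phi''(a)\phi(a)=\phi'(a)^2$, and contradict it using the equation and its first integral. Your relation $\tfrac{\lambda_1}{2}+\tfrac{2\lambda_2}{3}\phi^2(a)=0$ is exactly the paper's conclusion $\phi_{\omega,Z}^2(0)=-\tfrac{3\lambda_1}{4\lambda_2}$, and the attractive--repulsive case is excluded in both arguments by the same bound $\phi_{\omega,Z}^2(0)\le\phi^2(0)$ together with the restriction $\beta\omega<\alpha^2$.
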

\begin{proof} Let $v\in Ker({\cal L}_{1,Z})$, then we have 
\begin{equation}\label{valprop}
\left\{
\begin{aligned}
&{\cal L}_{1,Z}v(x)=0, \hspace{0.5cm} x>0.\\ 
&v\in L^2(0,\infty).
\end{aligned}
\right.
\end{equation}
Now, since the linear problem (\ref{valprop}) has dimension one (see \cite{BerShu91}) and  $\phi_{\omega}'$ satisfies (\ref{valprop}) then there exists $\alpha\in\mathbb{R}$ such that $v(x)=\alpha_0\phi'_{\omega}(x)$, for all $x>0$. A similar argument shows $v(x)=\beta\phi'_{\omega}(x)$, for $x<0$, with $\beta\in\mathbb{R}$. Next, from the continuity of $v$ and the parity of the function $\phi_{\omega}$, we deduce that $\alpha_0=-\beta$ and then we can rewrite $v$ as
\begin{equation}\label{parada}
v(x)=
\begin{cases}
\alpha_0\phi'_{\omega}(x), & \text{if}\hspace{0.3cm}  x\geq0,\\
-\alpha_0\phi'_{\omega}(x) & \text{if}\hspace{0.3cm} x<0.
\end{cases}
\end{equation}
Since $v\in {\cal D}({\cal L}_{1,Z})$ follows from (\ref{parada})   
\begin{equation}\label{nose}
v'(0+)-v'(0-)=\alpha_0\phi_{\omega}''(0+)+\alpha_0\phi_{\omega}''(0-)=-Z\alpha_0\phi_{\omega}'(0+).
\end{equation}
Now, we argue by contradiction. If $\alpha_0\neq0$, from (\ref{eqdifx}) and (\ref{nose}), we obtain that $\phi_{\omega}''(0+)=-Z/2\phi_{\omega}'(0+)$. Multiplying equation (\ref{eqdifx}) by $g'$ and integrating on the interval $(0,R)$, we get 
\begin{equation}
-\frac{1}{2}(g'(R))^2+\frac{1}{2}(g'(0+))^2-F(g(R))+F(g(0+))=0,
\end{equation}where $F(s)=\omega s^2/2+\lambda_1s^{4}/4+\lambda_2s^{6}/6$, by doing $R\rightarrow\infty$ and from (\ref{complimitado}), we obtain
\begin{equation}\label{tomala}
\frac{1}{2}(g'(0+))^2+F(g(0+))=0.
\end{equation}
Now, since $\phi_{\omega}$ satisfies equation (\ref{eqdifx}) then from (\ref{tomala}), we infer that $\frac{1}{2}(\phi_{\omega}'(0+))^2=-F(\phi_{\omega}(0))$. In addition, since $\phi_{\omega}$ is an even function, we obtain that $\phi_{\omega}'(0+)=\frac{-Z}{2}\phi_{\omega}(0)$. Therefore, we deduce that $\phi_{\omega}(0)>0$ is a zero of the following function 
\begin{equation}\label{poli}
P(s)=\frac{Z^2}{8}s^2+\omega\frac{s^2}{2}+\lambda_1\frac{s^{4}}{4}+\lambda_2\frac{s^{6}}{6}.
\end{equation} 
On the other hand, from equation (\ref{eqdifx}), we have that
\begin{equation}
\lim_{x\rightarrow 0+} \phi_{\omega}''(x)=\phi_{\omega}''(0+)=-\omega\phi_{\omega}(0)-\lambda_1\phi^{3}_{\omega}(0)-\lambda_2\phi^{5}_{\omega}(0),
\end{equation}
and since $\phi_{\omega}''(0+)=\frac{Z^2}{4}\phi_{\omega}(0)$, we deduce that $\phi_{\omega}(0)$ is a positive zero of the function 
\begin{equation}\label{poliester}
R(s)=\frac{Z^2}{4}s+\omega s+\lambda_1s^3+\lambda_2s^{5}.
\end{equation} 
Now, since $s_0=\phi_{\omega}(0)$ is a zero of both (\ref{poli}) and (\ref{poliester}), after some algebraic manipulations, we deduce that
\begin{equation}\label{identi}
s_0^{2}=-\frac{3\lambda_1}{4\lambda_2},
\end{equation}
and so we get  immediately a contradiction when $
\lambda_1, \lambda_2>0$. Now, in the case $
\lambda_1>0$ and  $\lambda_2<0$, we obtain from (\ref{soldelta})-(\ref{denominator}) the relation $\phi^2_{\omega}(0)\leqq \phi^2(0)$ and therefore
$$
-\frac{3\lambda_1}{4\lambda_2}\leqq -\frac{3\lambda_1}{4\lambda_2}\frac{1}{1+\frac{4}{\lambda_1}\sqrt{\frac{\lambda^2_1}{16}-\omega \frac{\lambda_2}{3}}},
$$
it which is a contradiction. Therefore, we conclude that $\alpha_0=0$ and then $v\equiv0$. It finished the proof.
\end{proof}

\begin{Remark} Our restrictions about the sign of the parameters $\lambda_1, \lambda_2$ are evidenced by the identity (\ref{identi}). Thus for the case $\lambda_1<0$ and $\lambda_2>0$ is not clear for us that the statement in Lemma \ref{Kert} is true.
\end{Remark}

Now, for starting our study based in perturbation theory, we establish the spectral structure of our ``limiting'' operator associated to ${\cal L}_{1,Z}$ when $Z\neq 0$.

\begin{Theorem}\label{espil2} \textit{Spectral properties of ${\cal L}_{1,0}$}. For $\omega<0$, we consider the self-adjoint linear operator ${\cal L}_{1,0}:H^2(\mathbb{R})\rightarrow L^2(\mathbb{R})$ given by
$$
{\cal L}_{1,0}= -\frac{d^2}{dx^2} -\omega -3\lambda_1\phi^2_{\omega, 0} - 5 \lambda_2\phi^4_{\omega, 0}
$$
with $\phi_{\omega, 0}$ being the profile $\phi_{\omega, Z}$ in the case $Z=0$. Then, ${\cal L}_{1,0}$ has a unique negative simple eigenvalue $-\lambda$, with $\lambda>0$. Zero is a simple eigenvalue with eigenfunction $\phi'_{\omega, 0}$. The rest of the spectrum is away from zero. Additionally, $\sigma_{ess}({\cal L}_{1,0})=[-\omega,\infty)$.
\end{Theorem}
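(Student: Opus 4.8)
The plan is to exploit the explicit ground-state profile $\phi_{\omega,0}$ together with classical Sturm–Liouville theory for Schrödinger operators on the line. First I would observe that since $\phi_{\omega,0}$ solves the elliptic equation \eqref{ordendoisz0}, differentiating that equation in $x$ shows immediately that ${\cal L}_{1,0}\phi_{\omega,0}'=0$, so $0$ is an eigenvalue with eigenfunction $\phi_{\omega,0}'$. Because $\phi_{\omega,0}$ is even, positive and strictly decreasing for $x>0$ (read off directly from the closed form in \eqref{denominator}, where $\cosh(2\sqrt{-\omega}x)$ is strictly increasing in $|x|$), the function $\phi_{\omega,0}'$ vanishes exactly once, at $x=0$. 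By the Sturm oscillation theorem for one-dimensional Schrödinger operators (the potential $V(x)=-3\lambda_1\phi_{\omega,0}^2-5\lambda_2\phi_{\omega,0}^4$ is smooth, even, and decays exponentially at infinity, so $\sigma_{ess}({\cal L}_{1,0})=[-\omega,\infty)$ by Weyl's theorem), an eigenfunction with exactly one zero corresponds to the second eigenvalue; hence $0$ is simple and there is precisely one eigenvalue strictly below it, say $-\lambda$ with $\lambda>0$, which is simple with a positive (nodeless) eigenfunction. This accounts for the entire discrete spectrum below $-\omega$, and there can be no eigenvalue in $(0,-\omega)$ since that would be a third eigenvalue whose eigenfunction has at least two zeros, contradicting nothing directly — so this last point needs the sharper statement that the eigenfunctions are ordered by nodal count and $\phi_{\omega,0}'$ is the $n=1$ one, forcing the rest of the discrete spectrum to be empty.

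Concretely, the key steps in order are: (i) verify ${\cal L}_{1,0}\phi_{\omega,0}'=0$ by differentiating \eqref{ordendoisz0}; (ii) identify $\sigma_{ess}({\cal L}_{1,0})=[-\omega,\infty)$ via exponential decay of $\phi_{\omega,0}$ and Weyl's theorem (Reed\&Simon \cite{Reed}); (iii) count the zeros of $\phi_{\omega,0}'$ — exactly one — using monotonicity of $\phi_{\omega,0}$ on each half-line, which follows from the explicit formula \eqref{denominator}; (iv) invoke Sturm–Liouville oscillation theory to conclude that $\phi_{\omega,0}'$ is the eigenfunction of the second eigenvalue, so $0$ is simple, there is exactly one simple negative eigenvalue $-\lambda$ below it with positive eigenfunction, and no further discrete spectrum; (v) conclude the rest of the spectrum is bounded away from $0$ since $\sigma_{ess}=[-\omega,\infty)$ with $-\omega>0$ and the only discrete points are $-\lambda<0<(-\omega)$.

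The main obstacle is step (iv): one must be careful that standard Sturm oscillation results apply to a Schrödinger operator on all of $\mathbb{R}$ (not a compact interval with prescribed boundary conditions). The clean way around this is to use the version of oscillation theory phrased for $L^2(\mathbb{R})$ Schrödinger operators with decaying potentials — the $n$-th eigenfunction below the essential spectrum has exactly $n$ zeros — which is precisely the framework already cited in the paper for the $\delta$-interaction case (Angulo\&Goloshchapova \cite{A2G, A3G}), specialized here to $Z=0$. An equally acceptable route is to note that ${\cal L}_{1,0}$ is exactly the linearized operator around the explicit soliton of the double-power NLS studied in \cite{Ma}--\cite{ota}, where this spectral picture is established; I would cite that and give the zero-counting argument for completeness. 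Either way the substantive content is the one-zero property of $\phi_{\omega,0}'$, which is transparent from the explicit profile, so no genuine difficulty remains.
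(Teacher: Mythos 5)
Your proposal is correct and follows essentially the same route as the paper, which simply notes that ${\cal L}_{1,0}\phi'_{\omega,0}=0$, that $\phi'_{\omega,0}$ has a unique zero at $x=0$, and then invokes classical Sturm--Liouville oscillation theory for whole-line Schr\"odinger operators with decaying potential (citing Berezin\&Shubin \cite{BerShu91}) together with Weyl's theorem for the essential spectrum. Your worry in step (iv) is resolved exactly as you suggest: the whole-line version of oscillation theory (the $n$-th eigenvalue below $\sigma_{ess}$ has an eigenfunction with exactly $n$ zeros) is the form used by the paper, and note that the theorem only asserts the remaining spectrum is positive and bounded away from zero, so you need not rule out discrete eigenvalues in $(0,-\omega)$.
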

\begin{proof} Since  ${\cal L}_{1,0}(\phi'_{\omega, 0})=0$ and  $\phi'_{\omega, 0}$ has a unique zero in $x=0$, we obtain immediately from the  classical Sturm-Liouville oscillation theory (see Berezin\&Shubin \cite{BerShu91}) the theorem.
 \end{proof}

Now, we show that the family of operators ${\cal L}_{1,Z}$ depends analytically of the variable $Z$, with $Z$ satisfying the conditions in Theorem \ref{solution}.

  \begin{Lemma}\label{lem6.5} As a function of the variable $Z$, $\left\{{\cal L}_{1,Z}\right\}$ is a real analytic family of self-adjoint operators of type (B) in the sense of Kato.
\end{Lemma}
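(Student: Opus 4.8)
The plan is to verify the defining conditions of a real analytic family of type (B) in the sense of Kato (see \cite[Chapter VII, Section 4.2]{Kato}): one must exhibit a common form domain, independent of $Z$, on which the associated sesquilinear forms $B_{1,Z}$ are closed, sectorial (here symmetric and bounded below), and depend on $Z$ in a way that extends holomorphically to a complex neighborhood of the real parameter interval. Recall from the discussion preceding Theorem \ref{repres} that the forms $B_{1,Z}$ all have the fixed domain $D(B_{1,Z})=H^1(\mathbb{R})$, which is the natural candidate for the common form domain. So the first step is simply to record that the form domain is $Z$-independent.

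Next I would write $B_{1,Z}$ explicitly as a sum of a $Z$-independent closed symmetric form and a $Z$-dependent perturbation:
\begin{equation*}
B_{1,Z}(f,g)=b_0(f,g)-Z\,f(0)\overline{g(0)},\qquad b_0(f,g)=\int_{\mathbb{R}}f'\overline{g'}\,dx-\int_{\mathbb{R}}f\overline{g}\,(\omega+3\lambda_1\phi_\omega^2+5\lambda_2\phi_\omega^4)\,dx,
\end{equation*}
where $b_0$ is closed and bounded below on $H^1(\mathbb{R})$ (its associated operator is $\mathcal L_{1,0}$ of Theorem \ref{espil2}). The perturbing form $f\mapsto f(0)\overline{g(0)}$ is $b_0$-bounded with relative bound zero: by the trace/Sobolev estimate $|f(0)|^2\le \varepsilon\|f'\|_{L^2}^2+C_\varepsilon\|f\|_{L^2}^2$ for every $\varepsilon>0$, the perturbation is infinitesimally small relative to $b_0$. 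Since the family depends on $Z$ through a single scalar coefficient multiplying a fixed bounded-below symmetric form, the map $Z\mapsto B_{1,Z}$ is affine, hence entire, in $Z$, and for each $Z$ in a complex strip the form $b_0+Z\cdot(\,\cdot\,(0)\overline{(\,\cdot\,)(0)})$ remains sectorial and closed on $H^1(\mathbb{R})$ by the KLMN-type stability of sectorial forms under infinitesimally small perturbations. By Kato's criterion this is exactly what it means for $\{\mathcal L_{1,Z}\}$ to be a real analytic (indeed, entire) family of type (B); and of course the operators are self-adjoint for real $Z$ because the forms are symmetric there.

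The only mild obstacle is being careful about the identification "form of type (B)": one must check that the perturbation $f\mapsto f(0)\overline{g(0)}$ is not merely bounded relative to the form $b_0$ but has relative bound zero, since Kato's theory of holomorphic families of type (B) requires the perturbing form to be small relative to the unperturbed closed form so that closedness and sectoriality persist on a complex neighborhood of the parameter. This is precisely the trace inequality above, and the verification is routine. With that in hand, the affine (hence analytic) dependence on $Z$ and the symmetry for real $Z$ give the claim, and the domain of the associated operators is the one already identified in Theorem \ref{repres}. \cqd
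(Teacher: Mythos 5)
Your overall strategy (fixed form domain $H^1(\mathbb{R})$, closedness and lower semiboundedness of the form, infinitesimal smallness of the $\delta$-term via the trace inequality $|f(0)|^2\le \varepsilon\|f'\|^2+C_\varepsilon\|f\|^2$) is the same as the paper's, but your key analyticity step rests on a false premise. You decompose
$B_{1,Z}(f,g)=b_0(f,g)-Z\,f(0)\overline{g(0)}$ and assert that $b_0$ is $Z$-independent with associated operator $\mathcal L_{1,0}$, so that the dependence on $Z$ is affine and hence entire. This is not the case: the potential in $B_{1,Z}$ is built from the profile $\phi_{\omega}=\phi_{\omega,Z}$ of (\ref{numeratorwithZ1}), which depends on $Z$ through the shift $R^{-1}\bigl(\tfrac{Z}{2\sqrt{-\omega}}\bigr)$. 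The operator $\mathcal L_{1,0}$ of Theorem \ref{espil2} involves $\phi_{\omega,0}$, not $\phi_{\omega,Z}$, and indeed the whole perturbation argument of Lemma \ref{lem6.8} hinges on the expansion $\phi_{\omega,Z}=\phi_{\omega,0}+Z\chi_0+O(Z^2)$. So the family is not affine in $Z$, and your "hence entire" conclusion does not follow from the decomposition you wrote.

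The gap is fixable, and the fix is exactly the third point of the paper's proof: one must argue that $Z\mapsto\phi_{\omega,Z}$ is a real analytic function (this follows from the explicit formula (\ref{soldelta}) and the analyticity of the diffeomorphism $R^{-1}$, as recorded in Theorem \ref{solution}), and hence that for each fixed $v\in H^1(\mathbb{R})$ the scalar map $Z\mapsto B_{1,Z}(v,v)$ is real analytic, the contribution of the potential term being $-\int v^2\,(3\lambda_1\phi_{\omega,Z}^2+5\lambda_2\phi_{\omega,Z}^4)\,dx$. Combined with the $Z$-independence of the form domain and the closedness and uniform lower bound you already establish, Kato's criterion (Theorem VII-4.2) then yields the type (B) analytic family. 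In short: keep your verification of the domain, closedness, and relative-bound-zero estimates, but replace the affine-dependence claim by the analyticity of the profile in $Z$.
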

\begin{proof}
From the theorem VII-4.2 in Kato \cite{Kato} and Reed and Simon \cite{Reed}, it is enough to show that the bilinear forms given in (\ref{spec14}) 
are real analytic of type (B), namely
\begin{enumerate}
\item The domain $D(B_{1, Z})$ of the forms ${B}_{1,Z}$ is independent of the parameter $Z$. In our case, $D({B}_{1,Z})=H^1(\mathbb{R})$ for $Z$ satisfying the conditions in Theorem \ref{solution}.
\item For each $Z$, ${B}_{1,Z}$ is closed and bounded from below. 
\item Since $\phi_{\omega, Z}$ and $R$ given in (\ref{soldelta}) and (\ref{Erre}), respectively, are analytic functions then $\phi_{\omega,Z}$ is an analytic function of $Z$. Thus, for each $v\in H^1(\mathbb{R})$ the function $Z\rightarrow {B}_{1,Z}(v,v)$ is analytic.   
\end{enumerate}
It finishes the proof.
\end{proof}
%\begin{Lemma}\label{simpleparoimpar}Let $Z$ be a real number. If $\lambda$ is a simple eigenvalue of ${\cal L}_{Z}$, then its eigenfunction is even or odd.
%\end{Lemma}
%\begin{Proof}%Sejam $\omega<0$ $v\in D\left({\cal L}_{1,Z}\right)-\left\{0\right\}$ tal que ${\cal L}_{1,Z} v =\lambda v$.
%Do fato que $\varphi_{\omega,Z}$ \'e par, obtemos que $g(x):=v(-x)$ tambem satisfaz ${\cal L}_{1,Z}g=\lambda g$. Como $\lambda$ \'e um autovalor simples para o operador ${\cal L}_{1,Z}$, ent\~ao existe $\beta\in\R$ tal que  $v(x)=\beta v(-x)$ para $x\in\R$. Se $v(0)\neq 0$ ent\~ao $\beta=1$ assim $v$ \'e par. Se $v(0)=0$ ent\~ao do Lema \ref{regul} ou da defini\c c\~ao de ${\cal D}$ obtemos que $v'(x)$ existe para todo $x\in\R$, portanto temos que $v'(0)=-\beta v'(0)$ logo do principio de unicidade de Cauchy $v'(0)\neq0$ (em outro caso $v\equiv 0$). Assim $\beta=-1$ de onde $v$ \'e \'impar.
%\cqd
%\end{Proof}

%\item $\sigma\left({\cal L}_{Z}\right)=\{\Pi_n(Z)|n=1,2,3,\cdots\}$.
%\item For all $Z\in\mathbb{R}$, the first eigenvalue and its corresponding eigenfunction are given by: $\Pi_1(Z)=-2$ and $\Omega_1(Z)=e^{-\frac{1}{2}\left(|x|+\frac{Z}{2}\right)^2}$.
Next, we use the Kato-Rellich theorem to prove some specific properties of the second eigenvalue and eigenfunction of the operator ${\cal L}_{1,Z}$. Namely,
\begin{Lemma}\label{piomega}
There exist $Z_0>0$ and analytic functions $\Pi_2:(-Z_0,Z_0)\rightarrow \mathbb{R}$, $\Omega_2:(-Z_0,Z_0)\rightarrow L^2(\mathbb{R})$, such that 
\begin{enumerate}[(i)]
\item For each $Z\in(-Z_0,Z_0)$, $\Pi_2(Z)$ is the second eigenvalue of ${\cal L}_{1,Z}$, which is simple and $\Omega_2(Z)$ its corresponding eigenfunction.
\item $\Pi_2(0)=0$ and $\Omega_2(0)=\phi'$, with $\phi=\phi_{\omega, 0}$ given in (\ref{denominator}).
\item $Z_0$ can be chosen small enough such that the spectrum of ${\cal L}_{1,Z}$ with $Z\in(-Z_0,Z_0)$ is greater than $0$ except by the first 
$2$ eigenvalues.
\end{enumerate}
\end{Lemma}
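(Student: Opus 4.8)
The plan is to apply analytic perturbation theory, in the form of the Kato--Rellich theorem, to the family $\{\mathcal L_{1,Z}\}$ near $Z=0$. By Lemma \ref{lem6.5} this is a real analytic family of self-adjoint operators of type (B), and by Theorem \ref{espil2} the unperturbed operator $\mathcal L_{1,0}$ has $-\lambda<0$ and $0$ as simple isolated eigenvalues (with $\ker\mathcal L_{1,0}$ spanned by $\phi'$, $\phi=\phi_{\omega,0}$), $\sigma_{ess}(\mathcal L_{1,0})=[-\omega,\infty)$ with $-\omega>0$, and the rest of the spectrum bounded away from $0$. These two facts are the only inputs.

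First I would fix a uniform lower bound: since $A_Z\geq -Z^2/4$ and the potentials $3\lambda_1\phi_{\omega,Z}^2+5\lambda_2\phi_{\omega,Z}^4$ are uniformly bounded for $|Z|\le 1$ (by Theorem \ref{solution} they depend analytically, hence continuously, on $Z$ and decay at $\pm\infty$), the forms $B_{1,Z}$ are uniformly bounded below, say $\mathcal L_{1,Z}\geq -M$ for $|Z|\le 1$, with $M>\lambda$. Next, pick $a>0$ with $(0,a]\cap\sigma(\mathcal L_{1,0})=\emptyset$ (possible since $0$ is isolated) and let $\Gamma$ be the positively oriented boundary of the rectangle $[-M-1,a]\times[-1,1]\subset\mathbb C$. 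By Theorem \ref{espil2}, $\Gamma\subset\rho(\mathcal L_{1,0})$ and the portion of $\sigma(\mathcal L_{1,0})$ enclosed by $\Gamma$ is exactly $\{-\lambda,0\}$, of total algebraic multiplicity $2$. Since $Z\mapsto\mathcal L_{1,Z}$ is analytic of type (B), the stability of spectral subspaces under such perturbations (Kato \cite{Kato}, Ch.~VII) gives $Z_0\in(0,1)$ so that for $|Z|<Z_0$ one has $\Gamma\subset\rho(\mathcal L_{1,Z})$ and the Riesz projection $P(Z)=-\frac{1}{2\pi i}\oint_\Gamma(\mathcal L_{1,Z}-\zeta)^{-1}\,d\zeta$ is analytic with constant rank $2$. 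Because $\mathcal L_{1,Z}\geq -M$, there is no spectrum in $(-\infty,-M-1)$, so all of $\sigma(\mathcal L_{1,Z})\cap(-\infty,a)$ lies inside $\Gamma$ and therefore consists of at most two eigenvalues counted with multiplicity.

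Applying the same construction to a small circle $\gamma$ centered at $0$ and enclosing no other point of $\sigma(\mathcal L_{1,0})$, the Riesz projection $P_2(Z)=-\frac{1}{2\pi i}\oint_\gamma(\mathcal L_{1,Z}-\zeta)^{-1}\,d\zeta$ is analytic of rank $1$ for $|Z|<Z_0$ (after shrinking $Z_0$), so $\mathcal L_{1,Z}$ has inside $\gamma$ a single, simple eigenvalue $\Pi_2(Z)$. I would set $\Omega_2(Z):=P_2(Z)\phi'$; this is analytic in $Z$, satisfies $\Omega_2(0)=P_2(0)\phi'=\phi'$ since $\phi'\in\ker\mathcal L_{1,0}=\mathrm{Ran}\,P_2(0)$, and is nonzero for $|Z|$ small, hence spans the one-dimensional eigenspace of $\Pi_2(Z)$; analyticity of $\Pi_2$ then follows from $\Pi_2(Z)=\langle\mathcal L_{1,Z}\Omega_2(Z),\Omega_2(Z)\rangle/\|\Omega_2(Z)\|^2$, or directly from Kato's theorem. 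Moreover $\Pi_2(0)=0$ because $0$ is the enclosed eigenvalue. The other eigenvalue of $\mathcal L_{1,Z}$ inside $\Gamma$ bifurcates from $-\lambda<0$, hence stays negative, so it is $<\Pi_2(Z)$ for $|Z|<Z_0$; this shows $\Pi_2(Z)$ is exactly the second eigenvalue, giving (i)--(ii). For (iii), the ``at most two'' count gives $\sigma(\mathcal L_{1,Z})\setminus\{\Pi_1(Z),\Pi_2(Z)\}\subset[a,\infty)\subset(0,\infty)$, where $\Pi_1(Z)$ is the first eigenvalue, so all the spectrum except the first two eigenvalues is positive.

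The step I expect to demand the most care is (iii): ruling out the emergence of spurious eigenvalues between the perturbed $0$ and the essential spectrum $[-\omega,\infty)$ as $Z$ varies. Rather than tracking eigenvalues individually, the argument above controls the whole part of the spectrum below the fixed level $a$ at once, through the constancy of the rank of the Riesz projection on the fixed contour $\Gamma$ together with the uniform bound $\mathcal L_{1,Z}\geq -M$; the boundedness of the profiles $\phi_{\omega,Z}$ in $Z$ (from the analyticity in Theorem \ref{solution}) is precisely what makes $M$ and $\Gamma$ independent of $Z$.
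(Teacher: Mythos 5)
Your proposal is correct and follows essentially the same route as the paper: separation of the spectrum of $\mathcal L_{1,0}$ by a closed contour enclosing $\{-\lambda,0\}$, stability of the Riesz projections under the type (B) analytic family of Lemma \ref{lem6.5}, and Kato's analytic perturbation theory for the simple eigenvalue bifurcating from $0$. Your version is in fact slightly more careful than the paper's on two points it treats tersely --- the uniform lower bound $\mathcal L_{1,Z}\geq -M$ ruling out spectrum escaping to $-\infty$, and the choice of a contour capturing \emph{all} of the spectrum below a fixed positive level $a$, which is what makes item (iii) airtight.
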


\begin{proof} The proof is standard. Indeed, there is a positive $M$ such that $\sigma\left({\cal L}_{1,Z}\right)\cap(-\infty,-M)=\emptyset$ for $Z\in [-a,a]$, $a$ small enough. From Theorem \ref{espil2}, defining $\lambda_{1,0}=-\lambda$ and $\lambda_{2,0}=0$, we can separate  the spectrum $\sigma\left({\cal L}_{1,0}\right)$ of ${\cal L}_{1,0}$ into two parts $\sigma_0=\{-\lambda,0\}$  
and $\sigma_1=[-\omega,\infty)$ by a simple closed curve $\Gamma\subset\rho\left({\cal L}_{1,0}\right)$ such that $\sigma_0\subset \text{Int}(\Gamma)$ and $\sigma_1$ in its exterior. Here, $\text{Int}(\Gamma)$ denotes the interior of $\Gamma$. From Lemma \ref{lem6.5} we can see that ${\cal L}_{1,Z}$ converges to ${\cal L}_{1,0}$ as $Z\to 0$ in the generalized sense (see Kato \cite{Kato}). So, from Theorem IV-3.16 in Kato \cite{Kato}, we have that $\Gamma\subset\rho\left({\cal L}_{1,Z}\right)$ for $Z\in(-\epsilon_1,\epsilon_1)$, $\epsilon_1$ small enough, and $\sigma\left({\cal L}_{1,Z}\right)$ is also separated by $\Gamma$ into two parts such that the part of $\sigma\left({\cal L}_{1,Z}\right)$ inside $\Gamma$ consists of a finite set of eigenvalues with total algebraic multiplicity $2$.

Now, for $i=1,2$, and $\gamma>0$ small enough we define the circles $\Gamma_i=\{z\in\mathbb{C}:|z-\lambda_{i,0}|=\gamma\}$, such that $\Gamma_1\cap\Gamma_2=\emptyset$ and $\Gamma_i$ is in the interior of $\Gamma$. Thus from the nondegeneracy of the eigenvalues $-\lambda_{i,0}$, we obtain that there exists $0<Z_1<\epsilon_1$ such that for $Z\in(-Z_1,Z_1)$, $\sigma\left({\cal L}_{1,Z}\right)\cap \text{Int}(\Gamma_i)=\left\{\lambda_{i,Z}\right\}$, where $\lambda_{i,Z}$ are simple eigenvalues for ${\cal L}_{1,Z}$, furthermore, $\lambda_{i,Z}\to \lambda_{i,0}$, as $Z\to 0$. Applying the Kato-Rellich's theorem (Theorem XII.8 in \cite{Reed}) for each one of the simple eigenvalues $\lambda_{i,0}$, $i=1,2.$, we obtain the existence of a positive $Z_0<Z_1$, and two analytic functions $\Pi_2, \Omega_2$ defined on the intervals $(-Z_0,Z_0)$ satisfying the items (i),(ii) and (iii) of the theorem. It finishes the proof.
\end{proof}

Now, we proceed to count the number of negative eigenvalues of the operator ${\cal L}_{1,Z}$. First of all, we do this for $Z$ small.

\begin{Lemma}\label{lem6.8} 
There exists $0<r<Z_0$ such that $\Pi_2(Z)<0$, for any $Z\in(-r,0)$ and $\Pi_2(Z)>0$ for any $Z\in(0,r)$. Therefore, for $Z$ negative and small ${\cal L}_{1,Z}$ has exactly two negative eigenvalues and for $Z$ positive and small ${\cal L}_{1,Z}$ has exactly one negative eigenvalue.    
\end{Lemma}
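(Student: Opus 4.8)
The plan is to exploit the analyticity of $\Pi_2$ together with the normalization $\Pi_2(0)=0$ from Lemma \ref{piomega}(ii). Since $\Pi_2$ is real analytic on $(-Z_0,Z_0)$ and vanishes at the origin, the sign of $\Pi_2(Z)$ for small $|Z|$ is governed by the first nonvanishing Taylor coefficient, and the stated pattern ($\Pi_2<0$ on $(-r,0)$, $\Pi_2>0$ on $(0,r)$) is exactly the one produced by a simple zero with positive slope. Thus the entire statement reduces to the strict inequality $\Pi_2'(0)>0$: once this holds, we have $\Pi_2(Z)=\Pi_2'(0)\,Z+O(Z^2)$ with $\Pi_2'(0)>0$, so there is $0<r<Z_0$ on which $\Pi_2(Z)$ keeps the sign of $\Pi_2'(0)\,Z$ for $Z\in(-r,r)\setminus\{0\}$, giving both sign assertions.

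To compute $\Pi_2'(0)$ I would use the Feynman--Hellmann formula for the real-analytic family of type (B) provided by Lemma \ref{lem6.5}. Writing $\Omega_2(Z)$ for the $L^2$-normalized eigenfunction and using $\Pi_2(Z)=B_{1,Z}(\Omega_2(Z),\Omega_2(Z))$, the contribution of $\partial_Z\Omega_2$ equals $2\Pi_2(Z)\langle\partial_Z\Omega_2,\Omega_2\rangle$, which vanishes because $\|\Omega_2\|_{L^2}\equiv 1$ and $\Pi_2(0)=0$. Hence $\Pi_2'(0)=\dot B_{1,0}(\Omega_2(0),\Omega_2(0))$, where $\dot B$ denotes differentiation of the explicit $Z$-dependence of the form in (\ref{spec14}) and, by Lemma \ref{piomega}(ii), $\Omega_2(0)=\phi'/\|\phi'\|_{L^2}$ with $\phi=\phi_{\omega,0}$ from (\ref{denominator}). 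The key simplification is that the boundary part of $\dot B$ disappears: differentiating the term $-Zf(0)^2$ yields $-f(0)^2$, which at $f\propto\phi'$ equals $-(\phi'(0))^2/\|\phi'\|^2=0$ since $\phi$ is even and $\phi'$ is therefore odd. Only the variation of the potential survives, giving $\Pi_2'(0)=-\|\phi'\|^{-2}\int_{\mathbb R}(\phi')^2\,\dot\phi\,(6\lambda_1\phi+20\lambda_2\phi^3)\,dx$, where $\dot\phi:=\partial_Z\phi_{\omega,Z}|_{Z=0}$.

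To evaluate $\dot\phi$ I would use the translation structure $\phi_{\omega,Z}(x)=\phi(|x|-s(Z))$ with $s(Z)=R^{-1}(-Z/(2\sqrt{-\omega}))$ from (\ref{esest})--(\ref{ese}) and $R$ in (\ref{Erre}). The chain rule gives $\dot\phi(x)=-s'(0)\,\phi'(|x|)$, and from $R'(0)=2\sqrt{-\omega}\sqrt{\alpha^2-\beta\omega}/(\alpha+\sqrt{\alpha^2-\beta\omega})>0$ one obtains $s'(0)=-1/(2\sqrt{-\omega}\,R'(0))<0$. Substituting and using that the integrand is even, the formula collapses to $\Pi_2'(0)=\dfrac{2s'(0)}{\|\phi'\|^2}\displaystyle\int_0^\infty(\phi'(x))^3\,(6\lambda_1\phi+20\lambda_2\phi^3)\,dx$.

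The main obstacle is exactly this last sign verification, where the two distinct sources of $Z$-dependence (the jump condition encoded in the domain $D_Z$ and the $Z$-shifted profile inside the potential) must combine to a definite sign. In the attractive--attractive regime $\lambda_1,\lambda_2>0$ it works cleanly: on $(0,\infty)$ one has $\phi>0$ and $\phi'<0$, so $(\phi')^3<0$, while $6\lambda_1\phi+20\lambda_2\phi^3>0$; hence the integral is strictly negative and, combined with $s'(0)<0$, yields $\Pi_2'(0)>0$. In the attractive--repulsive case $\lambda_2<0$ the factor $6\lambda_1\phi+20\lambda_2\phi^3=2\phi(3\lambda_1+10\lambda_2\phi^2)$ is no longer manifestly signed, so there the positivity of $\Pi_2'(0)$ must be extracted from the parameter restrictions of Theorem \ref{solution}, in particular the resulting bound on $\phi^2(0)$. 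Finally, the eigenvalue count follows: the lowest eigenvalue perturbs $-\lambda<0$ and remains negative for small $|Z|$ by continuity, while by Lemma \ref{piomega}(iii) the remaining spectrum is positive; thus for $Z\in(-r,0)$ there are exactly two negative eigenvalues and for $Z\in(0,r)$ exactly one.
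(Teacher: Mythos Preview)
Your argument is correct and complete in the attractive--attractive case, but it follows a genuinely different route from the paper. The paper does not use Feynman--Hellmann on the form; instead it expands $\Pi_2(Z)=\beta Z+O(Z^2)$, $\Omega_2(Z)=\phi'+Z\psi_0+O(Z^2)$, computes $\langle\mathcal L_{1,Z}\Omega_2(Z),\phi'\rangle$ in two ways, and uses the identities $\langle\mathcal L_{1,0}\chi_0,\psi\rangle=\phi(0)\psi(0)$ (obtained by differentiating the weak equation \eqref{delemas} in $Z$) and $\mathcal L_{1,0}\bigl(-\omega\phi-f(\phi)\bigr)=f''(\phi)(\phi')^2$. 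This yields the closed form
\[
\Pi_2'(0)=\beta=-\frac{\phi(0)\,\phi''(0)}{\|\phi'\|^2},
\]
whose positivity follows at once from $\phi(0)>0$ and $\phi''(0)<0$ (the $Z=0$ profile has a strict maximum at the origin; from \eqref{denominator} one computes $\phi''(0)=-2\phi(0)^3\sqrt{\alpha^2-\beta\omega}<0$).

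The practical difference is exactly where you flagged it: your integral expression $\Pi_2'(0)=\dfrac{2s'(0)}{\|\phi'\|^2}\displaystyle\int_0^\infty(\phi')^3\bigl(6\lambda_1\phi+20\lambda_2\phi^3\bigr)\,dx$ gives the sign immediately only when $\lambda_2>0$; for $\lambda_2<0$ the factor $3\lambda_1+10\lambda_2\phi^2$ can change sign near $x=0$ when $-\omega$ is close to $-3\lambda_1^2/(16\lambda_2)$, so the argument you sketch there is not yet a proof. The paper's closed form sidesteps this entirely and covers both regimes uniformly. If you want to close that gap within your approach, you can integrate by parts to reduce your integral to the paper's expression, or simply observe that your formula and the paper's must agree and then invoke $\phi''(0)<0$.
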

\begin{Proof}
By application of Taylor theorem around $Z=0$, the functions $\Pi_2$ and $\Omega_2$ in Lemma \ref{piomega} can be written as
\begin{equation}\label{taylorpiomega}
\begin{aligned}
\Pi_2(Z)&=\beta Z+O\left(Z^2\right),\\
\Omega_2(Z)&=\phi'_{\omega,0}+Z\psi_0+O\left(Z^2\right),
\end{aligned}
\end{equation}
where $\phi'_{\omega,0}=\frac{d}{dx}\phi_{\omega,0}$, $\beta\in \mathbb{R}$, ($\beta=\Pi_2'(0)$) and $\psi_0\in L^2(\mathbb{R})$ $(\psi_0=\Omega_2'(0))$. To obtain our result, it is enough to show that $\beta>0$ or equivalently that $\Pi_2(Z)$ is an increasing function of the variable $Z$ around $Z=0$. Since the function $Z\rightarrow\phi_{\omega,Z}$ is analytic, then for $Z$ close to zero, we have that   
\begin{equation}\label{taylorsol}
\phi_{\omega,Z}=\phi_{\omega,0}+Z\chi_0+O\left(Z^2\right).
\end{equation}
where 
\begin{equation}\label{explicitchi}
\chi_0=\left.\frac{d}{dZ}\phi_{\omega,Z}\right|_{Z=0}.
\end{equation}
Now, from the equation (\ref{peak}), we have that for all $\psi\in H^1(\mathbb{R})$,
\begin{equation}\label{delemas}
\left\langle -\phi''_{\omega,Z}-\omega\phi_{\omega,Z}-\lambda_1\phi^{3}_{\omega,Z}--\lambda_2\phi^{5}_{\omega,Z},\psi \right\rangle=Z\phi_{\omega,Z}(0)\psi(0).
\end{equation}Taking derivative with respect to the variable $Z$ in (\ref{delemas}) and evaluating in $Z=0$, we get that
\begin{equation}\label{delemaduro}
\left\langle {\cal L}_{1,0}\chi_0,\psi\right\rangle=\phi_{\omega,0}(0)\psi(0).
\end{equation}In order to obtain $\beta$ as a function of the variable $Z$. We compute the quantity $\left\langle {\cal L}_{1,Z}\Omega_2(Z),\phi'_{\omega,0}\right\rangle$ in two different ways   
\begin{enumerate}[(1)]
\item Since ${\cal L}_{1,Z}\Omega_2(Z)=\Pi_2(Z)\Omega_2(Z)$, then from (\ref{taylorpiomega})
\begin{equation}\label{primform}
\left\langle {\cal L}_{1,Z}\Omega_2(Z),\phi'_{\omega,0}\right\rangle=\beta Z||\phi'_{\omega,0}||^2+O\left(Z^2\right).
\end{equation}
\item Since ${\cal L}_{1,Z}$ is selfadjoint, then $\left\langle {\cal L}_{1,Z}\Omega_2(Z),\phi'_{\omega,0}\right\rangle=\left\langle \Omega_2(Z),{\cal L}_{1,Z}\phi'_{\omega,0}\right\rangle$. Now, since ${\cal L}_{1,0}\phi'_{\omega,0}=0$, it follows from (\ref{explicitchi}) that  
\begin{equation}\label{ele}
\begin{aligned}
{\cal L}_{1,Z}\phi'_{\omega,0}&={\cal L}_{1,0}\phi'_{\omega,0}+\left[f'\left(\phi_{\omega,0}\right)-f'\left(\phi_{\omega,Z}\right)\right]\phi'_{\omega,0}\\
&=\left[f'\left(\phi_{\omega,0}\right)-f'\left(\phi_{\omega,0}+Z\chi_0+O(Z^2)\right)\right]\phi'_{\omega,0}\\
&=-f''\left(\phi_{\omega,0}\right)Z\chi_0\phi'_{\omega,0}+O\left(Z^2\right)
\end{aligned}
\end{equation}where $f(x)=\lambda_1x^3+\lambda_2x^{5}$. Thus, from (\ref{taylorpiomega}) and (\ref{ele}), we obtain that
\begin{equation}\label{masmas}
\begin{aligned}
\left\langle {\cal L}_{1,Z}\Omega_2(Z),\phi'_{\omega,0}\right\rangle &=-Z\left\langle\phi'_{\omega,0},f''\left(\phi_{\omega,0}\right)\chi_0\phi'_{\omega,0}\right\rangle+O(Z^2).
\end{aligned}
\end{equation}
On the other hand, by direct computation, we see that 
\begin{equation}\label{tal}
{\cal L}_{1,0}(-\omega\phi_{\omega,0}-f\left(\phi_{\omega,0}\right))=f''(\phi_{\omega,0})[\phi'_{\omega,0}]^2.
\end{equation}Using (\ref{delemaduro}), (\ref{masmas}), (\ref{tal}) and that $\phi_{\omega,0}$ satisfies equation (\ref{ordendoisz0}), we obtain that
\begin{equation}\label{fin}
\begin{aligned}
\left\langle {\cal L}_{1,Z}\Omega_2(Z),\phi'_{\omega,0}\right\rangle&=-Z\left\langle\chi_0,f''(\phi_{\omega,0})[\phi'_{\omega,0}]^2\right\rangle+O\left(Z^2\right)\\
&=-Z\left\langle-{\cal L}_{1,0}\chi_0,-\omega\phi_{\omega,0}-f\left(\phi_{\omega,0}\right)\right\rangle+O\left(Z^2\right)\\
&=-Z\phi_{\omega,0}(0)(-\omega\phi_{\omega,0}(0)-f(\phi_{\omega,0}(0)))+O\left(Z^2\right)\\
&=-Z\phi_{\omega,0}(0)(\phi''_{\omega,0}(0))+O\left(Z^2\right).
\end{aligned}
\end{equation} 
\end{enumerate}
Finally, from (\ref{primform}) and (\ref{fin}), we conclude that
\begin{equation}
\beta=-\frac{\phi_{\omega,0}(0)\phi''_{\omega,0}(0)}{||\phi'_{\omega,0}||^2}+O(Z).
\end{equation}
Hence $\Pi_2'(0)=\beta>0$ for $Z$ small. This completes the proof of the lemma. 
\cqd \\\\
\end{Proof}

Now, we are in position for counting the number of negative eigenvalues of $\mathcal L_{i,Z}$ for every $Z$ admissible. By using  a classical continuation argument based on the Riesz-projection and  denoting the number of negatives eigenvalues of $\mathcal L_{i,Z}$ by $n(\mathcal L_{i,Z})$, we have the following characterization.

\begin{Theorem}\label{Numberofnegeig} Let $\lambda_1, \lambda_2$, and $\omega, Z$ satisfy the conditions in Theorem \ref{solution}. Then, 
\begin{enumerate}
 \item For the case of being $Z$ negative, $n\left({\cal L}_{1,Z}\right)=2$.
 \item For the case of being $Z$ positive, $n\left({\cal L}_{1,Z}\right)=1$.
\end{enumerate}
\end{Theorem}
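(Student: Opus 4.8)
The plan is to combine the two small-$Z$ facts already established (Lemma~\ref{lem6.8} gives $n(\mathcal L_{1,Z})=2$ for $Z$ negative and small, $n(\mathcal L_{1,Z})=1$ for $Z$ positive and small) with a continuation argument in $Z$ along each of the two admissible parameter intervals: $Z<0$ on one side, $Z>0$ on the other. The mechanism is the standard one: since $\{\mathcal L_{1,Z}\}$ is a real-analytic family of self-adjoint operators of type (B) (Lemma~\ref{lem6.5}), the eigenvalues below the bottom of the essential spectrum $\sigma_{ess}(\mathcal L_{1,Z})=[-\omega,\infty)$ depend continuously (indeed analytically, off crossings) on $Z$, and the associated Riesz projection onto the spectral subspace below any fixed level $c\in(-\omega,0)$ with $c\notin\sigma(\mathcal L_{1,Z})$ varies norm-continuously, so its rank — i.e. the number of negative eigenvalues — is locally constant. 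Hence $n(\mathcal L_{1,Z})$ can only change at a value $Z$ where an eigenvalue reaches $0$ or escapes into the essential spectrum. The first possibility is ruled out by Lemma~\ref{Kert}, which says $\ker(\mathcal L_{1,Z})=\{0\}$ for every admissible $Z\neq 0$; the second is ruled out because the essential spectrum bottom $-\omega>0$ is bounded away from $0$ uniformly in $Z$ (note $\omega$ is fixed throughout), so no eigenvalue can migrate across $0$ by being absorbed into $[-\omega,\infty)$.

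Concretely, I would fix $\omega<0$ and the signs of $\lambda_1,\lambda_2$ as in Theorem~\ref{solution}, and let $J$ denote the connected set of admissible $Z$ values with $Z<0$ (so $J=(-2\sqrt{-\omega},0)$ in the attractive-attractive case, and $J=\big(-\tfrac{\sqrt3\lambda_1}{2\sqrt{-\lambda_2}},0\big)\cap(-2\sqrt{-\omega},0)$ in the attractive-repulsive case; in either case $J$ is an interval). Define $N=\{Z\in J:\ n(\mathcal L_{1,Z})=2\}$. By Lemma~\ref{lem6.8}, $N$ is nonempty (it contains $(-r,0)$). I claim $N$ is both open and closed in $J$. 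Openness and local constancy follow from the Riesz-projection continuity above together with the fact that for each $Z_0\in J$ one may choose $c=c(Z_0)\in(-\omega,0)$ with $c\notin\sigma(\mathcal L_{1,Z_0})$ (possible since $\sigma(\mathcal L_{1,Z_0})\cap(-\infty,0)$ is a finite set of eigenvalues and $-\omega>0$), and then $c\notin\sigma(\mathcal L_{1,Z})$ for $Z$ near $Z_0$ with the rank of the projection unchanged — the only way the rank could jump at $Z_0$ is an eigenvalue crossing $c$, which by choosing $c$ generic is excluded on a neighborhood. Closedness: if $Z_0\in\overline N\cap J$, then $n(\mathcal L_{1,Z_0})\in\{1,2\}$ a priori (it cannot exceed $2$ by local constancy plus the explicit small-$Z$ count, propagated; and it cannot drop to $0$ for the same reason an eigenvalue would have had to cross $0$), and $n(\mathcal L_{1,Z_0})=1$ is impossible because that would force an eigenvalue of $\mathcal L_{1,Z}$ to pass through $0$ as $Z\to Z_0$, contradicting $\ker(\mathcal L_{1,Z})=\{0\}$ for all admissible $Z\neq0$ (Lemma~\ref{Kert}) and continuity of the eigenvalue branch. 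Hence $N=J$, giving item~1. The identical argument on the interval of admissible $Z>0$, starting from $n(\mathcal L_{1,Z})=1$ for small $Z>0$, yields item~2.

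The main obstacle I anticipate is making the ``no eigenvalue crosses $0$'' step fully rigorous along the whole interval rather than just locally: one must track a finite family of eigenvalue branches $Z\mapsto\mu_j(Z)$ and know that none of them tends to $0$ at an interior point of $J$, nor that a new branch emerges from the essential spectrum and dives below $0$. Both are handled by the two structural inputs — $\sigma_{ess}(\mathcal L_{1,Z})=[-\omega,\infty)$ with $-\omega>0$ fixed (so the ``supply'' of eigenvalues that could become negative is controlled, and nothing new appears near $0$ from the continuous spectrum), and $\ker(\mathcal L_{1,Z})$ trivial for $Z\neq0$ (so an existing branch cannot touch $0$) — but the write-up should state explicitly that the analytic dependence of type (B) gives branches $\mu_j(Z)$ that are continuous on $J$, so that "the number of negative eigenvalues is constant on each connected component of $\{Z\neq0\}$ on which $0$ is not an eigenvalue'' is a clean consequence; since $\{Z\neq0\}\cap(\text{admissible set})$ has exactly the two components $J$ and its positive counterpart, and $0$ is never an eigenvalue there, the count is globally constant on each, equal to its small-$Z$ value. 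A secondary, purely bookkeeping point is to confirm that the admissible $Z$-set really is an interval on each side of $0$ — immediate from the inequalities in Theorem~\ref{solution} — so that the connectedness argument applies.
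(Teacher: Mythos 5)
Your proposal is correct and follows essentially the same route as the paper: the authors explicitly invoke ``a classical continuation argument based on the Riesz-projection'' combined with Lemma~\ref{Kert} (trivial kernel), the analyticity of the family from Lemma~\ref{lem6.5}, and the small-$Z$ count of Lemma~\ref{lem6.8}, which is precisely the open-closed/locally-constant-rank argument you spell out. Your write-up merely supplies the details (connectedness of the admissible $Z$-intervals, the essential spectrum staying at $[-\omega,\infty)$ away from zero) that the paper leaves implicit by citing Le Coz et al.
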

\begin{Proof}
The proof is based in Lemma \ref{Kert} above and the ideas in Le Coz {\it et al.}  \cite{LeCoz}
\end{Proof}

We finish this section with the following information about  the second eigenfunction $\Omega_2(Z)$ of the operator ${\cal L}_{1,Z}$ obtained in Lemma \ref{piomega}.

\begin{Proposition}\label{ExtAutPar}The eigenfunction $\Omega_2(Z)$ of the linear operator ${\cal L}_{1,Z}$ associated to the second eigenvalue $\Pi_2(Z)$ obtained in Lemma \ref{piomega} can be extended for all  $Z$ admissible. Moreover, $\Omega_2(Z)$ is an odd function.
\end{Proposition}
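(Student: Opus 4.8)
The plan is to first establish existence of the analytic branch $\Omega_2(Z)$ on a maximal interval and then show that it is both globally defined (for all admissible $Z$) and odd. For the extension: by Lemma~\ref{lem6.5} the family $\{{\cal L}_{1,Z}\}$ is real analytic of type (B), so the Kato--Rellich machinery that produced $\Omega_2$ near $Z=0$ in Lemma~\ref{piomega} can be continued. Concretely, I would let $(Z_-,Z_+)$ be the maximal open interval containing $0$ on which there is an analytic family of simple eigenvalues $Z\mapsto\Pi_2(Z)$ with analytic eigenfunctions $Z\mapsto\Omega_2(Z)$, and argue that $Z_\pm$ coincide with the endpoints of the admissible range of $Z$ given in Theorem~\ref{solution}. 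The obstruction to continuation is the collision of $\Pi_2(Z)$ with a neighbouring point of the spectrum; but by Theorem~\ref{Numberofnegeig} the count $n({\cal L}_{1,Z})$ is constant ($=2$ for $Z<0$, $=1$ for $Z>0$), and by Theorem~\ref{espil1}-type reasoning together with Weyl's theorem $\sigma_{ess}({\cal L}_{1,Z})=[-\omega,\infty)$ stays put, so $\Pi_2(Z)$ remains an isolated simple eigenvalue bounded away from the essential spectrum and from the lowest eigenvalue $\Pi_1(Z)$ on the whole admissible interval. Hence no collision occurs and the Riesz-projection $P(Z)=-\frac{1}{2\pi i}\oint_{\Gamma}({\cal L}_{1,Z}-\xi)^{-1}\,d\xi$, with $\Gamma$ a fixed small circle around the branch, stays rank one and analytic, yielding the analytic extension of $\Omega_2(Z)$ (normalized, say, in $L^2$) to all admissible $Z$.

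For oddness: the cleanest route is a symmetry argument. Let $\mathcal{P}$ be the parity operator $(\mathcal{P}f)(x)=f(-x)$. Since $\phi_{\omega,Z}$ is even (it depends on $|x|$ by \eqref{numeratorwithZ1}), the potential $3\lambda_1\phi_{\omega,Z}^2+5\lambda_2\phi_{\omega,Z}^4$ is even, and the $\delta$-interaction is symmetric under $\mathcal{P}$; therefore $\mathcal{P}$ commutes with ${\cal L}_{1,Z}$ and preserves the domain $D_Z$. Consequently ${\cal L}_{1,Z}$ decomposes as an orthogonal sum over the even and odd subspaces, and each simple eigenfunction is either even or odd. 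At $Z=0$ we have $\Omega_2(0)=\phi'_{\omega,0}$, which is odd. By continuity of the analytic branch $Z\mapsto\Omega_2(Z)$ in $L^2(\mathbb{R})$, the parity cannot jump: $\mathcal{P}\Omega_2(Z)=\pm\Omega_2(Z)$ with a sign that is locally constant in $Z$, hence constant on the connected admissible interval, hence equal to $-1$ for all admissible $Z$. (Equivalently, $\langle \mathcal{P}\Omega_2(Z),\Omega_2(Z)\rangle$ is a continuous function taking values in $\{+1,-1\}$ after normalization and equal to $-1$ at $Z=0$.) Thus $\Omega_2(Z)$ is odd.

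The main obstacle I anticipate is not the oddness — that is a soft symmetry-plus-continuity argument — but rigorously justifying that the branch extends to the \emph{full} admissible interval without the simple eigenvalue $\Pi_2(Z)$ being absorbed into the essential spectrum $[-\omega,\infty)$ or crossing $\Pi_1(Z)$. This requires combining the global eigenvalue count from Theorem~\ref{Numberofnegeig} with a uniform spectral-gap estimate: one must check that $\Pi_2(Z)$ stays strictly below $-\omega$ and strictly above $\Pi_1(Z)$ uniformly on compact subsets, so that a fixed contour $\Gamma$ works along the whole branch. I would handle this by noting that, for $Z<0$, $\Pi_2(Z)<0<-\omega$ from $n({\cal L}_{1,Z})=2$ and that $\Pi_1(Z)$, being the ground state energy, is simple with a positive (hence non-odd) eigenfunction, so $\Pi_1(Z)\neq\Pi_2(Z)$; for $Z>0$ the odd eigenfunction $\Omega_2(Z)$ spans the bottom of the odd sector, which is separated from the even-sector ground state and from $[-\omega,\infty)$, giving the needed gap. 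With these gaps in hand the analytic continuation goes through and the proof is complete. \cqd
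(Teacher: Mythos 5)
The paper itself offers no argument for this proposition beyond a citation to \cite{LeCoz}, so your write-up is already more explicit than the source. Your oddness argument is correct and is the standard one: the parity operator $\mathcal{P}$ commutes with ${\cal L}_{1,Z}$ and preserves $D_Z$ (the jump condition is parity-symmetric and the potential is even), so a \emph{simple} eigenvalue has an eigenfunction of definite parity, the parity is locally constant along the continuous branch, and it equals $-1$ at $Z=0$ because $\Omega_2(0)=\phi'_{\omega,0}$. Likewise, for $Z<0$ your continuation argument is essentially complete: $n({\cal L}_{1,Z})=2$ on the whole negative admissible range (Theorem \ref{Numberofnegeig}), $\Pi_2(Z)<0<-\omega$ keeps the eigenvalue away from the essential spectrum, and $\Pi_1(Z)<\Pi_2(Z)$ because the ground state is simple with a positive, hence even, eigenfunction; so the Riesz projection stays rank one up to any hypothetical endpoint of the maximal interval and the branch extends past it.

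The genuine gap is in the case $Z>0$. There your justification that $\Pi_2(Z)$ survives as a discrete eigenvalue -- ``the odd eigenfunction $\Omega_2(Z)$ spans the bottom of the odd sector, which is separated from \dots $[-\omega,\infty)$'' -- assumes exactly what has to be proved, namely that the odd sector possesses a discrete eigenvalue strictly below $\inf\sigma_{ess}=-\omega$ for \emph{every} admissible $Z>0$. Since $\Pi_2(Z)>0$ for $Z>0$ (Lemma \ref{lem6.8}) and $n({\cal L}_{1,Z})=1$ no longer controls it, nothing in your argument excludes that $\Pi_2(Z)$ is absorbed into $[-\omega,\infty)$ as $Z$ grows; recall that a half-line Dirichlet Schr\"odinger operator with a shallow attractive well need not have any bound state, so this is not automatic. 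The clean repair (and the route of \cite{LeCoz}) is to use the orthogonal decomposition $L^2=L^2_{even}\oplus L^2_{odd}$: on odd functions the $\delta$-condition is vacuous, so ${\cal L}_{1,Z}\big|_{odd}$ is unitarily equivalent to the half-line Dirichlet operator with potential $-\omega-3\lambda_1\phi_{\omega,Z}^2-5\lambda_2\phi_{\omega,Z}^4$, for which $\phi'_{\omega,Z}$ is an explicit decaying zero-energy solution on $(0,\infty)$; counting its zeros via Sturm oscillation identifies the bottom of the odd sector and lets you track $\Pi_2(Z)$ (or conclude precisely when it exists) for all admissible $Z$. You should either carry out that computation or restrict the claimed extension to the range where it is actually needed in the stability proof, namely $Z<0$.
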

\begin{Proof}
The proof follows the same ideas as in \cite{LeCoz}.
\end{Proof}

%--------------------------------------------------------------------------------------------------------------------------------------------------------------------------------------
%--------------------------------------------------------------------------------------------------------------------------------------------------------------------------------------

\section{Convexity condition}\label{subconv}

In this section, we study the behavior  of the function $\omega \rightarrow -\partial_\omega||\phi_{\omega,Z}||^2$ with $\phi_{\omega,Z}$ given in (\ref{soldelta}). Due to the complexity of the formulas appearing in our calculations we need to use the  mathematical software {\it Mathematica}. We divide our analysis in the two cases:

\begin{enumerate}
\item[1)] $\lambda_1=\lambda_2=1$ (for the general case, $\lambda_1, \lambda_2>0$, similar analysis can be done): we have initially from (\ref{numeratorwithZ1}) that
\begin{equation}\label{integ}
\begin{aligned}
\int_{-\infty}^{\infty}\phi_{\omega,Z}^2(x)dx&=\int_{-\infty}^{\infty}\frac{1}{\alpha(\omega)+\beta(\omega)\text{cosh}(2\sqrt{-\omega}(|x|+b))}dx,\\
&=\frac{1}{\sqrt{-\omega}}\int_{2\sqrt{-\omega}b}^{\infty}\frac{1}{\alpha(\omega)+\beta(\omega)\text{cosh}(u)}du,
\end{aligned}
\end{equation} 
with $\alpha(\omega)=\frac{-1}{4\omega}$, $\beta(\omega)=\frac{\sqrt{9-48\omega}}{-12\omega}$, $b=R_\omega^{-1}\left(\frac{Z}{2\sqrt{-\omega}}\right)$ and $R_\omega$ given by
\begin{equation}\label{aux}
R_\omega(b)=\frac{\beta(\omega) \text{sinh}(2\sqrt{-\omega}b)}{\alpha(\omega)+\beta(\omega)\text{cosh}(2\sqrt{-\omega}b)}.
\end{equation}
By setting $\gamma(\omega)= \frac{\sqrt{3}}{\sqrt{3-16\omega}}$, we rewrite (\ref{integ}) in the following form 
\begin{equation}
\begin{aligned}
\int_{-\infty}^{\infty}\phi_{\omega,Z}^2(x)dx &= \frac{1}{\beta(\omega)\sqrt{-\omega}} \int_{2\sqrt{-\omega}b}^{\infty}\frac{1}{\gamma(\omega)+\text{cosh}(u)}du\\
&=-2\sqrt{3}\text{ }\text{arctan } \left(  \frac{\sqrt{3}-\sqrt{3-16\omega}}{4\sqrt{-\omega}} \text{ } \text{tanh} \left(  \frac{u}{2} \right )\right) \Biggr|_{u=2\sqrt{-\omega}b}^{u=\infty},
\end{aligned}
\end{equation}that is
\begin{equation}\label{normcuad}
||\phi_{\omega,Z}||^2=-2\sqrt{3}\left[\text{arctan}\left(\theta(\omega)\right)-\text{arctan}\left(\theta(\omega)\text{ }\text{tanh}(\sqrt{-\omega}b)\right)\right],
\end{equation}with $\theta(\omega)=\frac{\sqrt{3}-\sqrt{3-16\omega}}{4\sqrt{-\omega}}$. Now, we will obtain a formula to compute $\frac{db}{d\omega}$. From the relation (\ref{aux}) and setting  
\begin{equation}\label{agal}
H(\omega,b)=2\sqrt{-\omega}R_\omega(b),  
\end{equation} we deduce from (\ref{translat})-(\ref{Erre}) that $H(\omega,b)=Z$. Furthermore, since $\partial_bH>0$, then from the chain rule we deduce that
\[
\frac{db}{d\omega}=-\frac{\partial_\omega H}{\partial_bH},
\]therefore, using  (\ref{aux}) and (\ref{agal}), we conclude that 
\begin{equation}
\frac{db}{d\omega}(\omega,b)=\frac{4\sqrt{3}\sqrt{-\omega}h(\omega)b\text{ }\text{cosh}(s(\omega)b)+2\sqrt{3}(3-32\omega)\text{sinh}(s(\omega)b)+t(\omega)}{8(-\omega)^{\frac{3}{2}}\sqrt{h(\omega)}(h(\omega)+\sqrt{3}\sqrt{h(\omega)}\text{cosh}(s(\omega)b))},
\end{equation}where $t(\omega)=h(\omega)^{\frac{3}{2}}(2s(\omega)b+\text{sinh}(2s(\omega)b))$, $h(\omega)=3-16\omega$ and $s(\omega)=2\sqrt{-\omega}$.
Finally, from (\ref{normcuad}) we obtain that
\begin{multline}\label{derc}
\frac{1}{2\sqrt{3}}\partial_\omega ||\phi_{\omega,Z}||^2=\frac{3-\sqrt{3h(\omega)}}{u(\omega)\sqrt{-\omega h(\omega)}}+\\
\frac{\text{sech}^2(\sqrt{-\omega}b)\left[-2bu(\omega)\sqrt{-\omega}+\text{sinh}(2\sqrt{-\omega}b)(3-\sqrt{3h(\omega)})+4u(\omega)b'(\omega)(-\omega)^{\frac{3}{2}}\right]}
{2\sqrt{-\omega h(\omega)}(8\omega+\text{tanh}^2(\sqrt{-\omega}b)(-3+8\omega+\sqrt{3h(\omega)}))},
\end{multline}
where $u(\omega)=h(\omega)+\sqrt{3h(\omega)}$. 

Next, by using the computational software program Mathematica, we study the behavior of the functions $(\omega,Z)\rightarrow -||\phi_{\omega,Z}||^2$ and $(\omega,Z)\rightarrow -\partial_\omega||\phi_{\omega,Z}||^2$ given in (\ref{normcuad}) and (\ref{derc}), respectively. After some  two-dimensional plots by fixing one interval for $\omega$ and by choosing specific values for $Z$ we find the existence of a unique threshold value $Z^*$ of $Z$ such that the function $(\omega,Z)\rightarrow -\partial_\omega||\phi_{\omega,Z}||^2$  changes of signal. We also study  the derivate-function $(\omega,Z)\rightarrow -\partial_\omega||\phi_{\omega,Z}||^2$ via a three-dimensional surface and we also saw the possible location of $Z^*$.

Hence, after a delicated numerical study for determining the threshold value of  $Z$ associated to the mapping  $\omega \to -\partial_\omega||\phi_{\omega,Z}||^2$ we  can establish the following result.

\begin{Theorem}\label{convg} Let $\lambda_1=\lambda_2=1$ in (\ref{soldelta}) and it consider  $\omega, Z$ such that $\omega+\frac{Z^2}{4}<0$. Then there is a threshold  value $Z^*$,  $Z^*\approx-0.8660254$, such that the function $\omega \rightarrow -||\phi_{\omega,Z}||^2$ satisfies the following properties:
\begin{equation}\label{crestno}
\begin{cases}
-\partial_\omega||\phi_{\omega,Z}||^2>0, & \text{if $Z>Z^*$},\\
-\partial_\omega||\phi_{\omega,Z}||^2<0, & \text{if $Z<Z^*$},
\end{cases}
\end{equation}
\end{Theorem}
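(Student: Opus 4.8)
The claim concerns the sign of $-\partial_\omega\|\phi_{\omega,Z}\|^2$ as a function of the parameter $Z$, with the threshold $Z^*\approx-0.8660254\approx-\frac{\sqrt3}{2}$. The plan is to work directly from the closed formula \eqref{derc} for $\partial_\omega\|\phi_{\omega,Z}\|^2$, reducing everything to the single scalar quantity $b=b(\omega,Z)=R_\omega^{-1}(Z/(2\sqrt{-\omega}))$ and its $\omega$-derivative. First I would establish the algebraic relations among the auxiliary functions appearing in \eqref{derc}: $h(\omega)=3-16\omega>0$, $u(\omega)=h(\omega)+\sqrt{3h(\omega)}>0$, $\theta(\omega)=\frac{\sqrt3-\sqrt{3h(\omega)/3}}{4\sqrt{-\omega}}$ (rewritten so that $\sqrt{3-16\omega}=\sqrt{3h(\omega)/3}$ is kept explicit — note $3-16\omega = h(\omega)$, so in fact $\theta(\omega)=\frac{\sqrt3-\sqrt{h(\omega)}}{4\sqrt{-\omega}}$), and the denominator factor $8\omega+\tanh^2(\sqrt{-\omega}b)(-3+8\omega+\sqrt{3h(\omega)})$. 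A short computation should show this denominator has a definite sign on the admissible region $\omega+Z^2/4<0$; pinning down that sign is the first technical step.

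\smallskip
Next I would isolate the $Z$-dependence. The point is that for fixed $\omega<0$ the map $Z\mapsto b$ is an odd increasing diffeomorphism (from \eqref{Erre}, $R_\omega$ is odd and increasing, so is its inverse), so $b=0$ exactly when $Z=0$, $b<0$ when $Z<0$, and $b>0$ when $Z>0$, and moreover $b\to\pm\infty$ at the endpoints of the admissible $Z$-range. Plugging $b=0$ into \eqref{derc} kills the second term (the bracket contains $\sinh(0)=0$, $\sinh(0)=0$ in $\text{sinh}(2\sqrt{-\omega}b)$, and one must check $b'(\omega)$ at $b=0$: from $H(\omega,b)=Z$ with $Z$ held fixed at $0$, $b\equiv0$ so $b'(\omega)=0$ there), leaving $\frac{1}{2\sqrt3}\partial_\omega\|\phi_{\omega,Z}\|^2\big|_{Z=0}=\frac{3-\sqrt{3h(\omega)}}{u(\omega)\sqrt{-\omega h(\omega)}}$. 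Since $h(\omega)=3-16\omega>3$ for $\omega<0$, we get $\sqrt{3h(\omega)}>3$, hence the $Z=0$ value is negative, i.e. $-\partial_\omega\|\phi_{\omega,0}\|^2>0$; this is consistent with $0>Z^*$ and gives the base case. The heart of the argument is then to show that, as $Z$ decreases from $0$, $-\partial_\omega\|\phi_{\omega,Z}\|^2$ stays positive until $Z$ reaches $Z^*$ and becomes negative for $Z<Z^*$, uniformly in $\omega$; equivalently, that for each admissible $\omega$ the function $Z\mapsto -\partial_\omega\|\phi_{\omega,Z}\|^2$ has a single sign change, located at a value of $Z$ independent of $\omega$.

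\smallskip
To handle that I would substitute $v=\tanh(\sqrt{-\omega}b)\in(-1,1)$ and try to show that \eqref{derc}, after clearing positive denominators, becomes a rational expression in $v$ whose numerator, viewed as a polynomial in $v$, changes sign once on $(-1,1)$ at a root that — after using the identity $Z^2 = H(\omega,b)^2 = 4(-\omega)R_\omega(b)^2$ to re-express $\omega$-dependent coefficients — collapses to the $\omega$-free condition $Z=Z^*$. The special value $Z^*=-\sqrt3/2$ strongly suggests that the relevant root corresponds to the vanishing of a factor like $Z^2 - 3/4$ or $4Z^2-3$, which would match the appearance of $\sqrt3$ throughout the formulas; I would aim to exhibit exactly this factorization. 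The main obstacle I expect is precisely this algebraic reduction: the expression \eqref{derc} mixes $b$, $\sinh$, $\cosh$, $\tanh$, $\text{sech}$, $b'(\omega)$ (itself a messy quotient), and three different radicals $\sqrt{-\omega}$, $\sqrt{h(\omega)}$, $\sqrt{3h(\omega)}$, so showing that the sign is governed by a single clean threshold is genuinely delicate — this is exactly why the paper resorts to \emph{Mathematica}. A fully rigorous version would require either (i) a careful symbolic manipulation verifying the claimed factorization and the monotonicity/convexity in $v$ that forces a unique sign change, or (ii) interval-arithmetic enclosures of $\partial_\omega\|\phi_{\omega,Z}\|^2$ on a grid covering the admissible $(\omega,Z)$-region together with a Lipschitz estimate, as the Remark after Theorem \ref{main} essentially concedes by noting that $Z^*$ is hard to pin down exactly but that the relevant map does not oscillate near $Z^*$.
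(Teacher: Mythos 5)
There is a genuine gap: the decisive step of the argument is announced but never executed. What you actually establish is (a) the reduction of everything to $b=R_\omega^{-1}\bigl(Z/(2\sqrt{-\omega})\bigr)$ and the formula \eqref{derc}, and (b) the base case $Z=0$, where $b\equiv 0$, $b'(\omega)=0$, the second term of \eqref{derc} vanishes, and the first term $\frac{3-\sqrt{3h(\omega)}}{u(\omega)\sqrt{-\omega h(\omega)}}$ is negative because $h(\omega)=3-16\omega>3$; this correctly gives $-\partial_\omega\|\phi_{\omega,0}\|^2>0$ and is consistent with $Z^*<0$. But the content of the theorem is the dichotomy for \emph{all} admissible $(\omega,Z)$, i.e.\ that for each $\omega$ the map $Z\mapsto -\partial_\omega\|\phi_{\omega,Z}\|^2$ changes sign exactly once and that the crossing occurs at the $\omega$-independent value $Z^*=-\sqrt{3}/2$. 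For that step you offer only a conjecture (a factorization through $4Z^2-3$ after the substitution $v=\tanh(\sqrt{-\omega}b)$) together with two possible ways one \emph{might} verify it (symbolic manipulation or interval arithmetic). Neither is carried out, and nothing in the proposal rules out the scenario that the sign-change locus in the $(\omega,Z)$-plane is a genuine curve $Z=Z^*(\omega)$ rather than a vertical line; indeed the paper's own Remark after Theorem \ref{main} concedes that locating $Z^*$ is delicate. Until the claimed factorization is exhibited and the uniqueness of the root in $v\in(-1,1)$ is proved, the theorem is not established by your argument.

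For comparison, the paper does not prove this theorem analytically either: its ``proof'' consists precisely of deriving \eqref{normcuad} and \eqref{derc} (which you reproduce) and then inspecting two- and three-dimensional \emph{Mathematica} plots of $(\omega,Z)\mapsto-\partial_\omega\|\phi_{\omega,Z}\|^2$ over sample windows such as $\omega\in(-50,-2)$, $Z\in(-0.9,-0.7)$, narrowing $Z^*$ numerically to $\approx-0.86602\ldots$. So your plan, if completed, would be strictly stronger than what the paper offers, and your identification $Z^*=-\sqrt{3}/2$ (matching the paper's twelve digits) is a genuinely useful observation the paper does not make explicit. As it stands, however, your proposal reaches essentially the same point as the paper --- the closed-form derivative plus a belief about its sign structure --- without supplying either the algebraic proof you outline or the systematic numerical certification (a grid with rigorous error control) that would substitute for it.
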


Next, we give some numerical justification which  led us to establish the above result. Thus, by instance, for $\omega\in(-6,-1)$, $Z=-0.86$, and $\omega\in(-6,-1)$, $Z=-0.9$, in Figure \ref{normcres}, (a)-(b), respectively, we see that there is a drastic change in the growth of  the function $\omega \rightarrow -||\phi_{\omega,Z}||^2$ for some value of the parameter $Z\in (-0.9,-0.8)$. In the same way, Figure \ref{normchange}  (three dimensional) showed us a remarkable change on the behavior of the function $\omega \rightarrow -\partial_\omega||\phi_{\omega,Z}||^2$. For considering $\omega\in(-50,-2)$, $Z\in(-0.9,-0.8)$ and $Z\in(-0.8,-0.7)$, the three dimensional plot was more conclusive about the existence of a threshold value of $Z^*$. Now, for fixing $Z=-0.86602$ and $Z=-0.86603$ (see Figure \ref{deristu}), we improved our numerical localization for $Z^*$.

\begin{figure}[htb]
  \centering
    \subfigure[$\omega\in(-6,-1)$, $Z=-0.86$.]{%\label{normacres}%
      \includegraphics[scale=0.8]{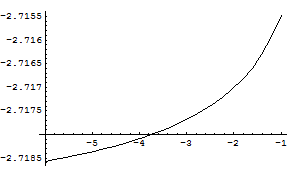}}\qquad
    \subfigure[$\omega\in(-6,-1)$, $Z=-0.9$.]{%\label{normadecres}%
      \includegraphics[scale=0.8]{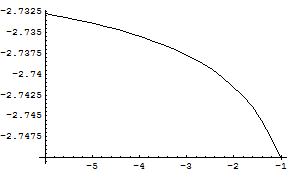}}
  \caption{Function $\omega\to-||\phi_{\omega,Z}||^2$ for specific values of $Z$.}
  \label{normcres}
\end{figure}

\begin{figure}[!h]
     \centering
     \subfigure[$\omega\in(-50,-2)$, $Z\in(-0.9,-0.8)$.]{%\label{derncha}%
     \includegraphics[scale=0.75]{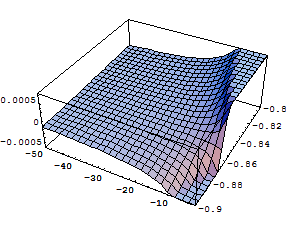}}\quad
     \subfigure[$\omega\in(-50,-2)$, $Z\in(-0.8,-0.7)$.]{%\label{dernwcha}%
     \includegraphics[scale=0.75]{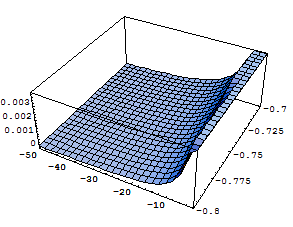}}
 \caption{Function $(\omega,Z)\to-\partial_\omega||\phi_{\omega,Z}||^2$, $\omega\in(-50,-2)$, $Z\in(-0.9,-0.8)$ and $Z\in(-0.8,-0.7)$ }
 \label{normchange}
\end{figure}
\begin{figure}[!h]
     \centering
 \subfigure[$\omega\in(-10,-0.8)$, $Z=-0.86602$.]{%\label{delposi}%
     \includegraphics[scale=0.75]{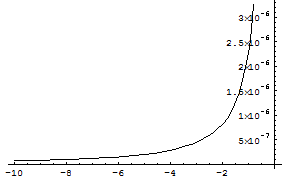}}\quad
     \subfigure[$\omega\in(-5,-1)$, $Z=-0.86603$.]{%\label{delne}%
     \includegraphics[scale=0.75]{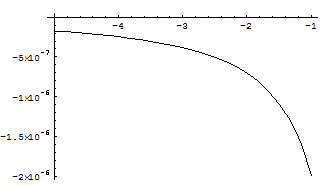}}
 \caption{Function $\omega\to-\partial_\omega||\phi_{\omega,Z}||^2$ for specific values of $Z$.  }		%
\label{deristu}
\end{figure}

\begin{Remark}
From our numerical study, $Z^*$ in Theorem \ref{convg} is the only critical point  of the mapping $\omega\to -||\phi_{\omega,Z}||^2$.
\end{Remark}

\item[2)]  $\lambda_1>0$ and $\lambda_2<0$:  since $Z$ and $\omega$ belong to a bounded interval, our numerical simulations are more accurate. Indeed, Figures 5 and 6 show the mapping $\omega \to - ||\phi_{\omega,Z}||^2$ for specific values of the parameters $\lambda_1>0$, $\lambda_2<0$, $Z$, and $\omega\in (-\frac34, -\frac14)$. These simulation showed us clearly that the mapping $\omega\to -\partial_\omega ||\phi_{\omega,Z}||^2$ is strictly positive. 
\begin{figure}[!h]
  \centering
	\subfigure[$\lambda_1=2$, $\lambda_2=-1$, $Z=-1$.]{%\label{sighe1}%
      \includegraphics[scale=0.8]{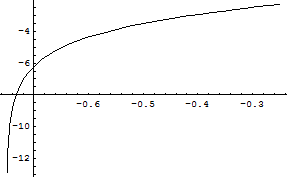}}\qquad
    \subfigure[$\lambda_1=2$, $\lambda_2=-1$, $Z=1$.]{%\label{sighe2}%
      \includegraphics[scale=0.8]{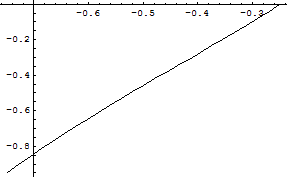}}
  \caption{The function $\omega\to-||\phi_{\omega,Z}||^2$, for $\omega\in (-\frac34,-\frac14)$ and $Z=\pm 1$.}
  \label{signonormaposneg}
   \end{figure}   
   
 \begin{figure}[!h]
  \centering
     \subfigure[$\lambda_1=4$, $\lambda_2=-2$, $Z=-1$.]{%\label{sighe3}%
      \includegraphics[scale=0.8]{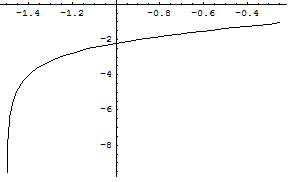}}\qquad
    \subfigure[$\lambda_1=4$, $\lambda_2=-2$, $Z=1$.]{%\label{sighe4}%
      \includegraphics[scale=0.8]{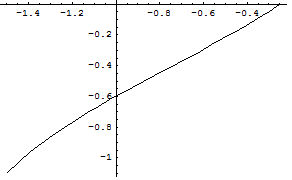}}
  \caption{The function $\omega\to-||\phi_{\omega,Z}||^2$, for $\omega\in (-\frac32,-\frac14)$ and $Z=\pm 1$.} 
  \label{signonormaposneg2}
\end{figure}

Therefore, from a detailed numerical study we can  conclude the following result. 

\begin{Theorem}\label{convg1} Let $\lambda_1>0, \lambda_2<0$ in (\ref{soldelta}) and it consider  $\omega, Z$ such that $\frac{Z^2}{4}<-\omega<-\frac{3\lambda_1^2}{16\lambda_2}$. Then, $-\partial_{\omega}||\phi_{\omega,Z}||^2>0$.
\end{Theorem}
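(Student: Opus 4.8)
\medskip
\noindent\textbf{Proof proposal.} The plan is to put $\|\phi_{\omega,Z}\|^2$ in closed form (as was done for $\lambda_1=\lambda_2=1$ in \eqref{normcuad}) and then reduce the sign of $-\partial_\omega\|\phi_{\omega,Z}\|^2$ to an elementary algebraic inequality by a suitable change of variables. Set $\mu=-\omega>0$, $\alpha=\lambda_1/4>0$, $\beta=\lambda_2/3<0$ and $k=k(\mu)=\sqrt{\alpha^2-\beta\omega}=\sqrt{\alpha^2+\beta\mu}$; the admissibility range $\frac{Z^2}{4}<-\omega<-\frac{3\lambda_1^2}{16\lambda_2}$ of Theorem~\ref{solution} is exactly $0<k<\alpha$ (this is where $\lambda_2<0$ enters, in contrast with the focusing case $\lambda_2>0$, where $k>\alpha$) together with $\mu>Z^2/4$. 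First I would use the substitution $y=\phi_{\omega,Z}^2$ to reduce $\int_{\mathbb R}\phi_{\omega,Z}^2\,dx$ to $\sqrt{\mu}\int_{2\sqrt{\mu}\,b}^{\infty}\frac{du}{\alpha+k\cosh u}$ with $b=R^{-1}\!\big(\tfrac{Z}{2\sqrt{\mu}}\big)$; since $k<\alpha$, the Weierstrass substitution $t=\tanh(u/2)$ now produces an antiderivative of $\operatorname{artanh}$ type rather than of $\arctan$ type.

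Introducing the two auxiliary variables
\[
\tau=\sqrt{\frac{\alpha-k}{\alpha+k}}\in(0,1),\qquad w=\tau\,\tanh\!\big(\sqrt{\mu}\,b\big)\in(-\tau,\tau),
\]
and using $\alpha^2-k^2=-\beta\mu$, I expect the compact formula
\[
\|\phi_{\omega,Z}\|^2=\frac{2}{\sqrt{-\beta}}\big[\operatorname{artanh}(\tau)-\operatorname{artanh}(w)\big],
\]
in which $\tau$ is tied to $\mu$ by $\mu=\dfrac{4\alpha^2\tau^2}{-\beta\,(1+\tau^2)^2}$, while $w$ is fixed by the jump condition \eqref{translat}, which after the same substitution becomes $\dfrac{w}{1-w^2}=\dfrac{Z\sqrt{-\beta}\,(1+\tau^2)}{4\alpha\,(1-\tau^2)}=:\Phi(\tau)$. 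A one-line computation gives $d\mu/d\tau=\dfrac{8\alpha^2\tau(1-\tau^2)}{-\beta\,(1+\tau^2)^3}>0$ on $(0,1)$, so $-\partial_\omega\|\phi_{\omega,Z}\|^2=\partial_\mu\|\phi_{\omega,Z}\|^2$ has the sign of $\partial_\tau\|\phi_{\omega,Z}\|^2$.

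Differentiating the displayed formula together with the relation $w/(1-w^2)=\Phi(\tau)$ (which yields $w'/(1-w^2)=\Phi'(\tau)(1-w^2)/(1+w^2)$) gives
\[
\frac{\sqrt{-\beta}}{2}\,\partial_\tau\|\phi_{\omega,Z}\|^2=\frac{1}{1-\tau^2}-\frac{\Phi'(\tau)(1-w^2)}{1+w^2}.
\]
Substituting the explicit $\Phi'(\tau)=\dfrac{Z\sqrt{-\beta}\,\tau}{\alpha\,(1-\tau^2)^2}$ and eliminating $Z$ by means of $\Phi(\tau)=w/(1-w^2)$, positivity of the right-hand side is equivalent to
\[
\frac{4\tau w}{(1+\tau^2)(1+w^2)}<1 .
\]
This holds for every admissible $Z$: if $w\le 0$ (i.e.\ $Z\le 0$) the left-hand side is $\le 0$; and if $w>0$, then by the identity $(1+\tau^2)(1+w^2)-4\tau w=(\tau-w)^2+(1-\tau w)^2$ the difference is strictly positive, since $\tau\in(0,1)$ precludes $(\tau,w)=(1,1)$. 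Hence $-\partial_\omega\|\phi_{\omega,Z}\|^2>0$ throughout the admissible region, which is the assertion; note that, unlike the focusing-quintic case treated in Theorem~\ref{convg}, no numerical input is required here.

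The only genuinely delicate step is to spot the substitution $(\tau,w)$ that simultaneously rationalizes the norm, its $\omega$-derivative, and the jump condition; once it is in place the monotonicity collapses to the sum-of-two-squares identity above. For bookkeeping one should also record that the constraints $\frac{Z^2}{4}<-\omega<-\frac{3\lambda_1^2}{16\lambda_2}$ are precisely what make $\tau$ sweep out all of $(0,1)$ and force the implicitly defined $w$ to satisfy $|w|<\tau$, i.e.\ to correspond to a genuine real shift $b$; thus the parametrization covers exactly the range of $\omega$ appearing in the statement.
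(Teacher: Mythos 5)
Your proposal is correct, and it takes a genuinely different --- and stronger --- route than the paper: for Theorem \ref{convg1} the paper offers no analytic argument at all, only Mathematica plots of $\omega\mapsto-\|\phi_{\omega,Z}\|^2$ (Figures 5 and 6) followed by the sentence that ``from a detailed numerical study we can conclude the following result,'' whereas you give a complete closed-form proof. I checked your computations and they hold up: with $\mu=-\omega$, $k=\sqrt{\alpha^2+\beta\mu}\in(0,\alpha)$ and $\tau=\sqrt{(\alpha-k)/(\alpha+k)}$ one has $(\alpha+k)\tau=\sqrt{\alpha^2-k^2}=\sqrt{-\beta\mu}$, so the prefactor $\sqrt{\mu}/\sqrt{-\beta\mu}$ collapses and indeed
\begin{equation*}
\|\phi_{\omega,Z}\|^2=\frac{2}{\sqrt{-\beta}}\left[\operatorname{artanh}(\tau)-\operatorname{artanh}(w)\right],\qquad w=\tau\tanh(\sqrt{\mu}\,b),
\end{equation*}
the jump condition \eqref{translat} becomes $w/(1-w^2)=Z\sqrt{-\beta}\,(1+\tau^2)/\bigl(4\alpha(1-\tau^2)\bigr)$, $d\mu/d\tau>0$ on $(0,1)$, and after eliminating $Z$ the sign of $\partial_\tau\|\phi_{\omega,Z}\|^2$ reduces exactly to $(1+\tau^2)(1+w^2)-4\tau w=(\tau-w)^2+(1-\tau w)^2>0$, which is strict because $\tau\in(0,1)$. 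What your approach buys is substantial: it upgrades Theorem \ref{convg1} (and hence the convexity input to Theorem \ref{main2}) from a numerically supported claim to a proved statement, it is uniform in all $\lambda_1>0$, $\lambda_2<0$ and all admissible $(\omega,Z)$, and it explains structurally why no threshold value $Z^*$ can appear in the attractive--repulsive case: since $k<\alpha$ the antiderivative is of $\operatorname{artanh}$ type and the monotonicity is an unconditional sum-of-two-squares identity, in contrast with the $\arctan$-type formula \eqref{normcuad} of the case $\lambda_2>0$, where the sign genuinely changes at $Z^*$. The only bookkeeping worth adding in a write-up is the (easy) verification that $\mu\mapsto\tau$ is a bijection of $\bigl(0,-\alpha^2/\beta\bigr)$ onto $(0,1)$ and that $|w|<\tau$ automatically, both of which you already note.
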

\end{enumerate}
\section{Stability Theory}\label{secsta}

Our stability results, Theorem \ref{main} and Theorem \ref{main2}, it will based in the Instability Theorem and Stability Theorem in \cite{grillakis2}. Thus, by the sake of self-containdness we establish it.
\begin{Theorem}\label{st0} 
Let $Z \neq 0$ fixed and  
\begin{equation*}\label{st}
p_Z(\omega_0)=
\begin{cases}
\begin{aligned}
&1, \quad  {\text{if}} \;\; -\partial_\omega\|\phi_{\omega,Z}\|^2>0,\;\;at\;\; \omega=\omega_0,\\
&0, \quad  {\text{if}} \;\; -\partial_\omega\|\phi_{\omega, Z}\|^2<0,\;\;at\;\; \omega=\omega_0.
\end{aligned}
\end{cases}
\end{equation*}
We denote $\mathcal H_{\omega_0, Z}=S''_{\omega_0, Z}(\phi_{\omega_0, Z})$, so from the analysis in section \ref{Spectro} we have 
\begin{equation*}\label{eget}
\mathcal H_{\omega_0,Z}=
\begin{bmatrix}
{\cal L}_{1,Z} & 0 \\
0 & {\cal L}_{2,Z}
\end{bmatrix}.
\end{equation*}
Suppose that $Ker( L_{2,Z})=[\phi_{\omega_0, Z}]$, ${\cal L}_{2,Z}\geq 0$, and $Ker({\cal L}_{1,Z})=\{0\}$. Then from \cite{grillakis2} the following assertions hold:
\begin{itemize}
\item[$(i)$] If $n(\mathcal H_{\omega_0, Z})=p_Z(\omega_0)$, then the standing wave $e^{-i\omega_0 t}\phi_{\omega_0, Z}$  is orbitally  stable in  $H^1(\mathbb R)$. 
\item[$(ii)$] If $n(\mathcal H_{\omega_0, Z})-p_Z(\omega_0)$ is odd, then the standing wave $e^{-i\omega_0 t}\phi_{\omega_0, Z}$  is orbitally  unstable in   $H^1(\mathbb R)$. 
\end{itemize}

Analogous result holds for the case of changing the space $H^1(\mathbb R)$ by $H_{even}^1(\mathbb R)$. 
\end{Theorem}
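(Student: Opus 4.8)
The plan is to deduce Theorem \ref{st0} from the abstract orbital stability and instability theory of Grillakis, Shatah and Strauss \cite{grillakis1, grillakis2}: once the NLSCQ flow is cast as a Hamiltonian system with a one-parameter phase symmetry, and the structural and spectral hypotheses of that theory are checked — most of which are supplied by the previous sections — the statement follows by identifying the relevant scalar slope integer with $p_Z(\omega_0)$ and quoting the Stability and Instability Theorems of \cite{grillakis2}.

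First I would set up the Hamiltonian framework. Writing $u=u_1+iu_2$, the Cauchy problem \eqref{cachy12} reads $iu_t=E'(u)$, i.e. $\frac{du}{dt}=JE'(u)$ with $J$ the symplectic operator given by multiplication by $-i$, $E$ the conserved Hamiltonian in \eqref{firstintegral3delta}, and $T(\theta)u=e^{i\theta}u$ the one-parameter unitary symmetry group whose associated conserved charge is $Q$ in \eqref{secondintegral3delta}; the functionals $E$ and $Q$ are of class $C^2$ on $H^1(\mathbb R)$ (the cubic, quintic and $|u(0)|^2$ terms being smooth by the Sobolev embedding $H^1(\mathbb R)\hookrightarrow L^p(\mathbb R)$, $p\ge 2$, and the boundedness of the trace at the origin). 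The profile $\phi_{\omega_0,Z}$ is, as noted at the start of Section \ref{Spectro}, a critical point of $S_{\omega_0,Z}=E-\omega_0 Q$, and by Theorem \ref{solution} the map $\omega\mapsto\phi_{\omega,Z}$ is a $C^1$ branch of such critical points on the admissible $\omega$-interval (real analytic in $Z$ as well). Local well-posedness with a smooth data-to-solution map is Theorem \ref{cazi}, and in the regimes considered here the solutions emanating from a neighborhood of the orbit of $\phi_{\omega_0,Z}$ exist globally (Theorem \ref{gwpdel} for $\lambda_2<0$, Theorem \ref{oxaca} below for $\lambda_1,\lambda_2>0$), as required by the stability statement.

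Next I would verify the spectral hypotheses for $\mathcal H_{\omega_0,Z}=S''_{\omega_0,Z}(\phi_{\omega_0,Z})=\mathrm{diag}(\mathcal L_{1,Z},\mathcal L_{2,Z})$. By Theorem \ref{espil1}, $\mathcal L_{2,Z}\ge 0$, zero is a simple eigenvalue of $\mathcal L_{2,Z}$ with eigenfunction $\phi_{\omega_0,Z}$, and $\sigma_{ess}(\mathcal L_{2,Z})=[-\omega_0,\infty)$; by hypothesis $\mathrm{Ker}(\mathcal L_{1,Z})=\{0\}$ (guaranteed for $Z\ne 0$ by Lemma \ref{Kert}), while $\mathcal L_{1,Z}$ has finitely many negative eigenvalues with the rest of its spectrum bounded away from $0$ (Theorem \ref{espil2}, Lemma \ref{piomega}, Theorem \ref{Numberofnegeig}, together with $\sigma_{ess}(\mathcal L_{1,Z})=[-\omega_0,\infty)$). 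Therefore $\mathrm{Ker}(\mathcal H_{\omega_0,Z})=\{0\}\times[\phi_{\omega_0,Z}]=[\,i\phi_{\omega_0,Z}\,]=[\,T'(0)\phi_{\omega_0,Z}\,]$ is one-dimensional and spanned exactly by the infinitesimal phase direction, $0$ is isolated in $\sigma(\mathcal H_{\omega_0,Z})$, the positive part of the spectrum is bounded away from $0$, and $n(\mathcal H_{\omega_0,Z})=n(\mathcal L_{1,Z})+n(\mathcal L_{2,Z})=n(\mathcal L_{1,Z})$ — precisely the structural assumptions under which \cite{grillakis2} operates. I would then record the scalar slope $d(\omega):=S_{\omega,Z}(\phi_{\omega,Z})$: since $\phi_{\omega,Z}$ is a critical point, $d'(\omega)=(\partial_\omega S_{\omega,Z})(\phi_{\omega,Z})=-Q(\phi_{\omega,Z})=-\tfrac12\|\phi_{\omega,Z}\|^2$, hence $d''(\omega)=-\tfrac12\,\partial_\omega\|\phi_{\omega,Z}\|^2$, so $d''(\omega_0)$ has the same sign as $-\partial_\omega\|\phi_{\omega,Z}\|^2$ at $\omega=\omega_0$, and the GSS integer (equal to $1$ when $d''>0$ and to $0$ when $d''<0$) coincides with $p_Z(\omega_0)$. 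Applying the Stability Theorem of \cite{grillakis2} when $n(\mathcal H_{\omega_0,Z})=p_Z(\omega_0)$ and its Instability Theorem when $n(\mathcal H_{\omega_0,Z})-p_Z(\omega_0)$ is odd yields $(i)$ and $(ii)$. Finally, because the flow of \eqref{deltasch1} preserves $H^1_{even}(\mathbb R)$ (Theorem \ref{cazi}) and $\phi_{\omega,Z}$ is even, the same reasoning applies verbatim in the Hilbert space $H^1_{even}(\mathbb R)$ — on which $E$, $Q$ and the phase symmetry all restrict — the only modification being that $n(\mathcal L_{1,Z})$ is replaced by the number of negative eigenvalues of $\mathcal L_{1,Z}$ on even functions (the odd second eigenfunction $\Omega_2(Z)$ of Proposition \ref{ExtAutPar} no longer contributing); this gives the stated analogue.

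The main difficulty is not in the abstract reduction but in making legitimate the hypotheses being invoked in this $\delta$-interaction context. The key point is that the translation invariance is broken by the defect, so for $Z\ne 0$ the operator $\mathcal L_{1,Z}$ does \emph{not} carry the $\phi_{\omega,Z}'$ zero mode present in the free ($Z=0$) case; the triviality of $\mathrm{Ker}(\mathcal L_{1,Z})$ (Lemma \ref{Kert}) is exactly what makes $\mathcal H_{\omega_0,Z}$ have the minimal one-dimensional kernel required by \cite{grillakis2}. Equally, the actual conclusions hinge on the precise value of $n(\mathcal L_{1,Z})\in\{1,2\}$, which is accessible only through the analytic perturbation and Riesz-projection continuation arguments of Section \ref{Spectro} (Lemmas \ref{lem6.5}, \ref{piomega}, \ref{lem6.8} and Theorem \ref{Numberofnegeig}); granting this input, Theorem \ref{st0} is a direct transcription of \cite{grillakis2}.
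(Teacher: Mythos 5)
Your reduction to the Grillakis--Shatah--Strauss framework is essentially the route the paper takes: Theorem \ref{st0} is presented there as a direct quotation of \cite{grillakis2} once the hypotheses (kernel of $\mathcal H_{\omega_0,Z}$ spanned by $i\phi_{\omega_0,Z}$, $\mathcal L_{2,Z}\geq 0$, finite Morse index, $d''(\omega_0)=-\tfrac12\partial_\omega\|\phi_{\omega,Z}\|^2$) are in place, and your verification of those hypotheses, the identification of the GSS integer with $p_Z(\omega_0)$, and the restriction to $H^1_{even}(\mathbb R)$ all match what the paper does. The stability part $(i)$, including the passage from conditional stability to global existence via Theorem \ref{oxaca}, is fine as you present it.

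There is, however, a genuine gap in your treatment of $(ii)$. The Instability Theorem of \cite{grillakis2} that yields \emph{orbital} (as opposed to spectral) instability when $n(\mathcal H_{\omega_0,Z})-p_Z(\omega_0)$ is odd is not unconditional: it requires an additional semigroup estimate (estimate (6.2) in \cite{grillakis2}) for $e^{t\bb A_{\omega,Z}}$ with $\bb A_{\omega,Z}=\left(\begin{smallmatrix}0 & \mathcal L_{2,Z}\\ -\mathcal L_{1,Z} & 0\end{smallmatrix}\right)$, and the authors state explicitly (Remark \ref{nonstab}(a)) that they do not know how to verify it in this setting. What the odd-parity condition gives for free is only spectral instability. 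The paper closes the gap by two substitute arguments: when $n(\mathcal H_{\omega_0,Z})=2$ (the case $Z<0$) one can invoke Ohta \cite{ota2}, which delivers orbital instability directly without passing through linear instability; and in general one uses the linear-to-nonlinear instability scheme of \cite{HenPerWre}, whose key hypothesis is that the data-to-solution map is at least $C^2$ --- precisely the smoothness established in Theorem \ref{cazi}. Your proposal quotes the "Instability Theorem" of \cite{grillakis2} as if it applied verbatim, so item $(ii)$ is not actually proved as written; you need to either verify the GSS semigroup estimate or substitute one of these two arguments, as the paper does.
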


\begin{Remark} \label{nonstab}The instability criterium part set up above deserves some comments:
\begin{enumerate}
\item[a)] it is well known from \cite{grillakis2}  that when $n(\mathcal H_{\omega_0, Z})-p_Z(\omega_0)$
 is odd, we obtain only spectral instability of $e^{i\omega t}\phi_{\omega_0,Z}$. Now, for obtaining orbital instability due to Theorem 6.1 in \cite{grillakis2}, it is sufficient to show estimate (6.2) in   \cite{grillakis2} for the semigroup  $e^{t\bb A_{\omega, Z}} $ generated by 
$$
\bb A_{\omega, Z}=\left(\begin{array}{cc}  0& \mathcal L_{2, Z} \\ -\mathcal L_{1,Z} &  0 \end{array}\right).
$$
For us, it is non-clear how to obtain that estimate (6.2).

\item[b)] In the particular case $n(\bb H_{\omega_0, Z})=2$ (it which will happened for the case $Z<0$)
we can to apply the results in Ohta \cite[Corollary 3 and 4]{ota2} for obtaining orbital instability part of the above theorem.  We note that in this case the instability results are obtained without using one argument through linear instability.

\item[c)] For justifying in a general framework (namely, $n(\bb H_{\omega_0, Z})=2$ no necessarily being true) orbital instability implications from a spectral instability result in the case of the model (\ref{deltasch1}), we can use the approach established in   \cite[Theorem 2]{HenPerWre}. The key point of this method is to use that  the mapping data-solution associated to model (\ref{deltasch1}) is at least of class $C^2$ (see Theorem \ref{cazi} above). We note that  the  results in  \cite{HenPerWre} have been applied successfully in the case of Schr\"odinger models on start graphs \cite{AG1}-\cite{AG2} and in \cite{ALN}-\cite{AN} for models of KdV-type.

\end{enumerate}

\end{Remark}

\begin{proof}$[${\bf{Theorem \ref{main}}}]

Let  $\omega$ such that $\omega+\frac{Z^2}{4}<0$. From Theorems \ref{espil1} and \ref{Kert} we have for every $Z\neq 0$ that $Ker( L_{2,Z})=[\phi_{\omega, Z}]$, ${\cal L}_{2,Z}\geq 0$, and $Ker({\cal L}_{1,Z})=\{0\}$.

\begin{enumerate}
\item[a)] For $Z>0$ we have from Theorem \ref{Numberofnegeig} and Theorem \ref{convg} that $n(\mathcal H_{\omega, Z})=p_Z(\omega)$ Thus from Theorem \ref{st0} we have that $e^{-i\omega t}\phi_{\omega, Z}$  is orbitally  stable in  $H^1(\mathbb R)$. We note that we
have initially a ``{\it {conditional stability}}''  for the profile $\phi_{\omega,Z}$, because of,  for $u_0\in U_\eta(\phi_{\omega,Z})$ we have that the solution $u(t)$ of (\ref{cachy12}) satisfies $u(t)\in U_\epsilon(\phi_{\omega,Z})$ for all $t\in (-T^*, T^*)$, where $T^*$ represents the maximal time of existence of the solution $u=u(t)$. But, as we will be shown below (Theorem \ref{oxaca}), $T^*=+\infty$.

\item[b)] Let $Z\in (Z^*,0)$. From Theorem \ref{convg} we have  $p_Z(\omega)=1$. Then from Theorem \ref{Numberofnegeig} follows that 
$n(\mathcal H_{\omega, Z})-p_Z(\omega)=2-1=1$. Thus we obtain from Remark \ref{nonstab}-$b)$ that $e^{-i\omega t}\phi_{\omega, Z}$  is orbitally  unstable in  $H^1(\mathbb R)$. By Remark \ref{nonstab}-$c)$ and Theorem \ref{cazi}, it nonlinear instability behavior can be deduced from a spectral instability result.

\item[c)] Let $Z\in (Z^*,+\infty)$. From Proposition \ref{ExtAutPar}, the second eigenvalue of  ${\cal L}_{1,Z}$ on  $L^2(\mathbb{R})$ has associated an odd eigenfunction. So, such eigenvalue disappears when  ${\cal L}_{1,Z}$ is restricted to the space $H^2_{even}(\mathbb{R})$ (we note that in this case ${\cal L}_{1,Z}: H^2_{even}(\mathbb{R})\to L^2_{even}(\mathbb{R})$). In addition, since $\phi_{\omega,Z}$ is an even function with  $\left\langle {\cal L}_{1,Z}\phi_{\omega,Z},\phi_{\omega,Z}\right\rangle<0$, for all $Z\neq 0$, then the first eigenvalue of the operator ${\cal L}_{1,Z}$ is still present. In other words, we have that $n(\mathcal H_{\omega,Z}|_{H^1_{even}(\mathbb{R})})=1$. Therefore, from the persistence of the solution for  (\ref{cachy12}) on the space $H^1_{even}(\mathbb{R})$ and $p_Z(\omega)=1$ we obtain, similarly as in item $a)$ above,  that $e^{-i\omega t}\phi_{\omega, Z}$  is  ``{\it {conditionally stable}}'' on $H^1_{even}(\mathbb{R})$. But, as we will be shown below (Theorem \ref{oxaca}), the solution really is global.

\item[d)] Let $Z<Z^*$. Following a similar analysis as in item $c)$ above, we can see that  $n(\mathcal H_{\omega,Z}|_{H^1_{even}(\mathbb{R})})=1$. Since $p_Z(\omega)=0$, follows from Remark \ref{nonstab}-$a)$ that the profile $e^{-i\omega t}\phi_{\omega, Z}$ is spectrally unstable. Therefore, from Theorem  \ref{cazi} and  Remark \ref{nonstab}-$c)$ follow that $e^{-i\omega t}\phi_{\omega, Z}$ is nonlinearly  unstable in  $H^1_{even}(\mathbb{R})$ and so in  $H^1(\mathbb{R})$.
\end{enumerate}
\end{proof}

\begin{proof}$[${\bf{Theorem \ref{main2}}}]

The proof is similar to that given for Theorem  \ref{main} above, by considering Theorem   \ref{cazi} (global existence for any initial data),   Theorem \ref{st0}, Remark \ref{nonstab} and Theorem \ref{convg1}. 
\end{proof}
\vskip0.1in

\begin{Remark}
As we do not have a global well posedness result of the Cauchy problem (\ref{cachy12}) for any initial data in the case $\lambda_1, \lambda_2>0$,  the orbital stability results given in the proof of Theorem \ref{main} above are already valid just for the existence time of solution $u(t)$. However, if we combine the local well posedness result in Theorem \ref{cazi} and our {\it {conditional stability result}} is possible to have the existence of global solutions of the Cauchy problem (\ref{cachy12}) for initial data  close to the orbit $\Omega_{\phi_{\omega,Z}}=\{e^{i\theta} \phi_{\omega,Z}: \theta\in [0,2\pi]\}$. 
\end{Remark}

\begin{Theorem}\label{oxaca}(\textbf{\textit{Global existence of solutions for  (\ref{deltasch1})   close to the orbit $\Omega_{\phi_{\omega,Z}}$}})

Let $\lambda_1, \lambda_2>0$. Then, all solutions $u=u(t)$ of equation (\ref{deltasch1}) there is for all time, provided:
\begin{enumerate}
\item[1)] the initial data $u(0)=u_0\in H^1(\mathbb R)$ is close to the orbit $\Omega_{\phi_{\omega,Z}}$ with   $Z>0$.

\item[2)]  the initial data $u(0)=u_0\in H_{even}^1(\mathbb R)$ close to the orbit $\Omega_{\phi_{\omega,Z}}$ with  $Z>Z^*$ .
\end{enumerate}

\end{Theorem}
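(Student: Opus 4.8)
The plan is to derive Theorem \ref{oxaca} from the local theory of Theorem \ref{cazi} together with the \emph{conditional} orbital stability already obtained inside the proof of Theorem \ref{main}, by way of the blow-up alternative. Recall from the contraction estimate \eqref{neq_well_1} in the proof of Theorem \ref{cazi} that the existence time $T$ depends only on $\|u_0\|_{H^1(\mathbb R)}$; by the standard continuation argument this produces, for each $u_0$, a maximal time $T^*=T^*(u_0)\in(0,+\infty]$ together with the blow-up alternative: if $T^*<\infty$ then $\limsup_{t\uparrow T^*}\|u(t)\|_{H^1(\mathbb R)}=+\infty$. Moreover, by Theorem \ref{cazi}, an even initial datum gives an even solution, so the same alternative holds in $H^1_{even}(\mathbb R)$. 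Hence it suffices to exhibit a uniform-in-$t$ bound on $\|u(t)\|_{H^1(\mathbb R)}$ over $[0,T^*)$ whenever $u_0$ is sufficiently close to the orbit $\Omega_{\phi_{\omega,Z}}$ --- in $H^1(\mathbb R)$ when $Z>0$, and in $H^1_{even}(\mathbb R)$ when $Z>Z^*$.

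For this I would invoke the conditional stability established in the proof of Theorem \ref{main}-a) (case $Z>0$, in $H^1(\mathbb R)$) and in the proof of Theorem \ref{main}-c) (case $Z>Z^*$, in $H^1_{even}(\mathbb R)$). Under the spectral hypotheses verified there --- $\mathrm{Ker}(\mathcal L_{2,Z})=[\phi_{\omega,Z}]$, $\mathcal L_{2,Z}\ge0$, $\mathrm{Ker}(\mathcal L_{1,Z})=\{0\}$, together with $n(\mathcal H_{\omega,Z})=1=p_Z(\omega)$ in the full space for $Z>0$ (by Theorem \ref{Numberofnegeig}, Theorem \ref{espil1} and Theorem \ref{convg}), respectively $n(\mathcal H_{\omega,Z}|_{H^1_{even}})=1=p_Z(\omega)$ for $Z>Z^*$ --- the Grillakis--Shatah--Strauss argument \cite{grillakis2}, built on the conserved action $S_{\omega,Z}=E-\omega Q$, yields: for every $\epsilon>0$ there exists $\eta>0$ such that $\inf_{\theta}\|u_0-e^{i\theta}\phi_{\omega,Z}\|_{H^1(\mathbb R)}<\eta$ implies $\inf_{\theta}\|u(t)-e^{i\theta}\phi_{\omega,Z}\|_{H^1(\mathbb R)}<\epsilon$ for all $t\in[0,T^*)$. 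The decisive point is that $\eta$ depends only on $\epsilon$, $\omega$, $Z$ and the conserved values $E(u_0),Q(u_0)$, and \emph{not} on $T^*$: this is exactly because the coercivity of $S_{\omega,Z}$ near the orbit, and the conservation of $E$ and $Q$, are time-independent, which is why the argument is legitimate even before global existence is available.

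It then remains to turn this closeness into a bound. Since $\Omega_{\phi_{\omega,Z}}=\{e^{i\theta}\phi_{\omega,Z}:\theta\in[0,2\pi]\}$ is a bounded subset of $H^1(\mathbb R)$ with $\|e^{i\theta}\phi_{\omega,Z}\|_{H^1(\mathbb R)}=\|\phi_{\omega,Z}\|_{H^1(\mathbb R)}$ for all $\theta$, the triangle inequality gives, for every $t\in[0,T^*)$,
\[
\|u(t)\|_{H^1(\mathbb R)}\;\le\;\inf_{\theta}\|u(t)-e^{i\theta}\phi_{\omega,Z}\|_{H^1(\mathbb R)}+\|\phi_{\omega,Z}\|_{H^1(\mathbb R)}\;<\;\epsilon+\|\phi_{\omega,Z}\|_{H^1(\mathbb R)},
\]
a bound independent of $t$. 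By the blow-up alternative this forces $T^*=+\infty$, in $H^1(\mathbb R)$ when $Z>0$ and in $H^1_{even}(\mathbb R)$ when $Z>Z^*$, which is precisely the claim.

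The only genuine obstacle I anticipate is the one flagged above --- ensuring that the $\eta$--$\epsilon$ dependence in the conditional stability statement does not secretly involve the maximal existence time $T^*$. Once this is made explicit (it follows from the time-independence of the Lyapunov functional $S_{\omega,Z}$ and of its coercivity estimate near the orbit, which in turn rest only on the spectral information of Section \ref{Spectro} and the convexity information of Section \ref{subconv}), the remainder is the routine bootstrap: local existence plus an a priori $H^1$ bound yields global existence. No spectral or variational input beyond what Section \ref{Spectro} and Section \ref{subconv} already provide is required.
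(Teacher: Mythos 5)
Your proof is correct and follows essentially the same route as the paper: the conditional orbital stability from the proof of Theorem \ref{main} (items a) and c)) gives $\inf_{\theta}\|u(t)-e^{i\theta}\phi_{\omega,Z}\|_{H^1}<\epsilon$ on the whole maximal interval, the triangle inequality converts this into the uniform bound $\|u(t)\|_{H^1}<\epsilon+\|\phi_{\omega,Z}\|_{H^1}$, and the blow-up alternative then forces $T^*=+\infty$. Your added remarks — that the local existence time depends only on $\|u_0\|_{H^1}$ and that the $\eta$–$\epsilon$ dependence in the stability estimate is independent of $T^*$ — make explicit two points the paper leaves implicit, but the argument is the same.
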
  
\begin{proof}
We will show that $u(t)$ is bounded in the $H^1(\mathbb R)$-norm. Indeed,  from the  {\it {conditional stability result}} of the orbit $\Omega_{\phi_{\omega,Z}}$ ($Z>0$ and $Z>Z^*$) established initially in the proof of Theorem \ref{main}, we have that for $\epsilon>0$, there exists $\eta>0$ such that for some $s\in \mathbb R$
\begin{equation}\label{esterm3}
||u(t)-e^{-is}\phi_{\omega,Z}||_1\leq\epsilon,\text{ }\text{ }\text{ }\text{ }\forall t\in (-T^*,T^*),
\end{equation} 
whenever the initial data $u(0)=u_0$ satisfies that $\inf_{s\in[0,2\pi]}||u_0-e^{-is}\phi_{\omega,Z}||_1<\eta$, with $T^*$ denoting the  maximal time of existence of  the solution $u=u(t)$ given by Theorem \ref{cazi}. Now, from the inequality (holds for all $s\in\mathbb{R}$ and $t\in (-T^*,T^*)$)
\begin{equation}\label{estbn1}
\begin{aligned}
||u(t)||_1\leq||u(t)-e^{-is}\phi_{\omega,Z}||_1+||e^{-is}\phi_{\omega,Z}||_1<\epsilon +||\phi_{\omega,Z}||_1,
\end{aligned}
\end{equation}
with  $\inf_{s\in[0,2\pi]}||u_0-e^{-is}\phi_{\omega,Z}||_1$ small enough,  we obtain the  boundedness of the solution $u$. This finishes the Theorem.
\end{proof}

\section{Appendix}

We note that for the case $Z>0$ we can use the theory of extension  for symmetric operators of  von Neumann and Krein (see \cite{Albeverio}, \cite{ak}, \cite{Nai67}, \cite{Reed}) for obtaining that the Morse-index for ${\cal L}_{1,Z}$ is exactly one in the cases $\lambda_1>0$ and $ \lambda_2>0$. For the cases $\lambda_1>0$ and $ \lambda_2<0$ that approach can not be  optimal with regard to 
the values of $\omega, Z$. Indeed, let $\mathcal A$ be a  densely defined symmetric operator in a Hilbert space $H$. The deficiency numbers of $\mathcal A$ are denoted by  $n_\pm(\mathcal A):=\dim\ker(\mathcal A^*\mp i\mathcal I)$, where $\mathcal A^*$ is the adjoint operator of $\mathcal A$ and  $\mathcal I$ is the  identity operator. To investigate the number of negative eigenvalues of $\mathcal L_{1, Z}$ we will use the following abstract result (see \cite[Chapter IV, \S 14]{Nai67}).

\begin{Proposition}\label{semibounded}
Let $\mathcal A$  be a densely defined lower semi-bounded symmetric operator (i.e., $\mathcal A\geq m\mathcal I$)  with finite deficiency indices $n_{\pm}(\mathcal A)=k<\infty$  in the Hilbert space $H$. Let also $\widetilde{\mathcal A}$ be a self-adjoint extension of $\mathcal A$.  Then the spectrum of $\widetilde{\mathcal A}$  in $(-\infty, m)$ is discrete and  consists of at most  $k$  eigenvalues counting multiplicities.
\end{Proposition}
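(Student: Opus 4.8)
The plan is to split the assertion into two parts that can be handled separately: first, that $\sigma(\widetilde{\mathcal A})\cap(-\infty,m)$ is discrete (only isolated eigenvalues of finite multiplicity), and second, that the total multiplicity of those eigenvalues is at most $k$. For the first part I would compare $\widetilde{\mathcal A}$ with the Friedrichs extension of $\mathcal A$; for the second I would combine a one-line quadratic-form estimate with von Neumann's parametrization of the self-adjoint extensions of $\mathcal A$. The structural fact behind everything is that the domain of any self-adjoint extension of $\mathcal A$ is obtained from $D(\mathcal A)$ by adjoining a subspace of dimension exactly $k$.

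For the discreteness, I would argue as follows. Since $\mathcal A\geq m\mathcal I$, the Friedrichs extension $\mathcal A_F$ of $\mathcal A$ exists, is self-adjoint, extends $\mathcal A$, and still satisfies $\mathcal A_F\geq m\mathcal I$, so $\sigma_{\mathrm{ess}}(\mathcal A_F)\subset[m,\infty)$. Since $\widetilde{\mathcal A}$ and $\mathcal A_F$ are two self-adjoint extensions of the same symmetric operator with deficiency indices $(k,k)$, for $z\in\mathbb C\setminus\mathbb R$ the resolvent difference $(\widetilde{\mathcal A}-z)^{-1}-(\mathcal A_F-z)^{-1}$ has finite rank (in fact rank at most $k$) by Krein's resolvent formula; being finite rank it is compact, so Weyl's theorem yields $\sigma_{\mathrm{ess}}(\widetilde{\mathcal A})=\sigma_{\mathrm{ess}}(\mathcal A_F)\subset[m,\infty)$. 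Hence $\sigma(\widetilde{\mathcal A})\cap(-\infty,m)$ contains no essential spectrum, and therefore consists of isolated eigenvalues of finite multiplicity.

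For the counting, let $f_1,\dots,f_N$ be orthonormal eigenvectors of $\widetilde{\mathcal A}$ with eigenvalues $\lambda_1,\dots,\lambda_N<m$, and put $V=\mathrm{span}\{f_1,\dots,f_N\}\subset D(\widetilde{\mathcal A})$. For any nonzero $f=\sum_j c_j f_j\in V$ we get $\langle\widetilde{\mathcal A}f,f\rangle=\sum_j\lambda_j|c_j|^2<m\|f\|^2$, whereas $\langle\mathcal Ag,g\rangle\geq m\|g\|^2$ for every $g\in D(\mathcal A)$ and $\widetilde{\mathcal A}$ restricted to $D(\mathcal A)$ coincides with $\mathcal A$; hence $V\cap D(\mathcal A)=\{0\}$, so $V$ injects into $D(\widetilde{\mathcal A})/D(\mathcal A)$ and $N\leq\dim\big(D(\widetilde{\mathcal A})/D(\mathcal A)\big)$. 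By von Neumann's theorem one has $D(\widetilde{\mathcal A})=D(\mathcal A)\dotplus\{\psi+U\psi:\psi\in\ker(\mathcal A^*-i\mathcal I)\}$ for some unitary $U:\ker(\mathcal A^*-i\mathcal I)\to\ker(\mathcal A^*+i\mathcal I)$, so $\dim\big(D(\widetilde{\mathcal A})/D(\mathcal A)\big)=n_+(\mathcal A)=k$, whence $N\leq k$. Since this bounds the cardinality of every orthonormal family of eigenvectors with eigenvalue below $m$, the total multiplicity of $\sigma(\widetilde{\mathcal A})\cap(-\infty,m)$ is at most $k$, and together with the discreteness this proves the proposition.

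I expect the only real obstacle to be in the discreteness step: one must make sure the comparison operator is self-adjoint with the same lower bound $m$ — which is exactly what the Friedrichs construction guarantees — and that passing from one self-adjoint extension to another (both with finite deficiency) does not change the essential spectrum, which follows from the finite-rank nature of the resolvent difference together with Weyl's theorem. The counting step is then elementary once von Neumann's parametrization is invoked.
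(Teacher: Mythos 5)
Your argument is correct, and it is worth noting that the paper does not prove this proposition at all: it is quoted verbatim as a classical result from Naimark \cite[Chapter IV, \S 14]{Nai67}, so your proposal supplies a self-contained proof where the paper only gives a citation. Both halves of your argument are sound: the finite-rank resolvent difference between $\widetilde{\mathcal A}$ and the Friedrichs extension (range contained in $\ker(\mathcal A^*-z)$, hence rank at most $k$) combined with Weyl's theorem correctly localizes the essential spectrum in $[m,\infty)$, and the quadratic-form injection of the span of low eigenvectors into $D(\widetilde{\mathcal A})/D(\mathcal A)$ correctly bounds the total multiplicity by $k$. The one point you should tighten is that von Neumann's decomposition $D(\widetilde{\mathcal A})=D(\mathcal A)\dotplus\{\psi+U\psi\}$ holds for the \emph{closure} $\overline{\mathcal A}$, not for $\mathcal A$ itself; if $\mathcal A$ is merely symmetric and not closed, the quotient $D(\widetilde{\mathcal A})/D(\mathcal A)$ can exceed $k$ in dimension. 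The fix is immediate: replace $\mathcal A$ by $\overline{\mathcal A}$ throughout, observing that closure preserves the lower bound $m$ (by continuity of the form in the graph norm) and the deficiency indices (since $\mathcal A^*=\overline{\mathcal A}^{\,*}$), so your injection lands in $D(\widetilde{\mathcal A})/D(\overline{\mathcal A})$, whose dimension is exactly $k$. As a side remark, the min--max form of your counting step applied to spectral subspaces $\operatorname{Ran}E_{(-\infty,\mu)}$, $\mu<m$, would also rule out essential spectrum below $m$ directly and let you dispense with the Friedrichs extension and Krein's formula altogether; but your two-step route is perfectly valid.
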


Now, it is well known that  $A_Z=-\frac{d^2}{dx^2}- Z\delta(x)$ is the family of self-adjoint extensions   of the  symmetric  operator 
\begin{equation}\label{L0}
 \mathcal L^0=-\frac{d^2}{dx^2}, \quad D(\mathcal L^0)=\{ f\in  H^2(\mathbb R):  f(0)=0 \},
\end{equation} 
where $n_{\pm}(\mathcal L^0)=1$ (see \cite{Albeverio}). Now, by considering the minimal operator 
\begin{equation}\label{L0}
 \mathcal L_{min}=-\frac{d^2}{dx^2}-\omega-3\lambda_1\phi_{\omega, Z}^2-5\lambda_2 \phi_{\omega, Z}^4, \quad\quad D(\mathcal L_{min})=\{ f\in  H^2(\mathbb R):  f(0)=0 \},
\end{equation} 
we obtain from Theorem 6 in \cite{Nai67} that $ \mathcal L^0$ and  $\mathcal L_{min}$ have the same deficiency indices. Moreover, it is not difficult to see that $\mathcal L_{1, Z}$, for $Z\in \mathbb R$, it represents the family of  self-adjoint extensions   of the  symmetric  operator $ \mathcal L_{min}$.

Next, we see that $ \mathcal L_{min}\geq 0$. Indeed, since for $Z>0$ we have $\phi'_{\omega, Z} \neq 0$ for $x\neq 0$,  we can verify that for $f\in D(\mathcal \mathcal L_{min})$ we have 
 \begin{equation}\label{NLSL11}
\mathcal L_{min} f=\frac{-1}{\phi'_{\omega, Z}}\frac{d}{dx}\left[(\phi'_{\omega, Z})^2\frac{d}{dx}\left(\frac{f}{\phi'_{\omega, Z}}\right)\right],\quad\quad x\neq 0.
\end{equation}
 Now using \eqref{NLSL11} and integrating by parts, we get
\begin{equation}\label{NLSL12}
\begin{split}
(\mathcal L_{min} f,f)=&
\int\limits_{-\infty}^{0-}(\phi'_{\omega, Z})^2\left|\frac{d}{dx}\left(\frac{f}{\phi'_{\omega, Z}}\right)\right|^2dx\\ &+
\int\limits^{\infty}_{0+}(\phi'_{\omega, Z})^2\left|\frac{d}{dx}\left(\frac{f}{\phi'_{\omega, Z}}\right)\right|^2dx+
\left[f'\overline{f}-|f|^2\frac{\phi''_{\omega, Z}}{\phi'_{\omega, Z}}\right]_{0-}^{0+}.
\end{split}
\end{equation}
The integral terms in \eqref{NLSL12} are nonnegative.  Due to the condition $f(0)=0$, non-integral term vanishes, and we get $\mathcal L_{min}\geq 0$. Therefore from Proposition \ref{semibounded} we obtain $n(\mathcal L_{1, Z})\leq 1$.

Now,  from the relation  
 \begin{equation}\label{ineq1}
 (\mathcal L_{1, Z} \phi_{\omega,Z},  \phi_{\omega,Z})=(-2\lambda_1\phi^3_{\omega,Z}- 4\lambda_2\phi^5_{\omega,Z}, \phi_{\omega,Z}),
  \end{equation}
 we obtain that for $\lambda_1, \lambda_2>0$ follow $(\mathcal L_{1, Z} \phi_{\omega,Z},  \phi_{\omega,Z})<0$ and from the min-max principle we obtain $n(\mathcal L_{1, Z})\geqq1$ and therefore $n(\mathcal L_{1, Z})=1$ for all $\omega<0$ and $-\omega>\frac{Z^2}{4}$. 
 
 Next for the case $\lambda_1>0$ and $\lambda_2<0$, follows from Theorem \ref{espil2} and Theorem \ref{solution} that
  $$
 (\mathcal L_{1, Z} \chi_0, \chi_0)\to  (\mathcal L_{1,0 } \chi_0, \chi_0)<0,\quad\text{for}\;\;Z\to 0^+
 $$
 where $\chi_0\in H^2(\mathbb R)$ is a negative direction associated to $ \mathcal L_{1,0 }$. Thus, for $Z$ small enough we have  $n(\mathcal L_{1, Z})=1$. For an arbitrary $Z\in (0, \frac{\sqrt{3}\lambda_1}{2\sqrt{-\lambda_2}})$
  was difficult to find a negative direction $v$ such that $(\mathcal L_{1, Z} v, v)<0$. Moreover, numerical calculations showed us that the quantity in (\ref{ineq1}) is not always negative. Thus, we will establish that at least for  $0<Z<\frac{\sqrt{3}\lambda_1}{2\sqrt{-\lambda_2}}$ and $\omega$ such that
\begin{equation}\label{rest}
\frac{Z^2}{4}<-\omega<\min\left\{-\frac{3\lambda_1^2}{16\lambda_2},-\frac{1}{6}\frac{\lambda^2_1}{\lambda_2}+\frac{Z^2}{4}\right\},
\end{equation} 
we also have that $(\mathcal L_{1, Z} \phi_{\omega,Z}, \phi_{\omega,Z})<0$. Indeed, first of all, we can rewrite (\ref{ineq1}) in the following form, 
\begin{equation}\label{cineq1}
 (\mathcal L_{1, Z} \phi_{\omega,Z},  \phi_{\omega,Z})=-4\lambda_2\left(\frac{1}{2}\frac{\lambda_1}{\lambda_2}+\phi^2_{\omega,Z}, \phi^4_{\omega,Z}\right).
 \end{equation}
Secondly, for $Z>0$, we have that
\begin{equation}\label{cineq2}
\phi^2_{\omega,Z}(x)\leq\phi^2_{\omega,Z}(0),\hspace{0.5cm}\text{for all}\hspace{0.5cm} x\in\mathbb{R}.
\end{equation}
Now, $\phi_{\omega,Z}^2(0)$ in (\ref{cineq2}) can be computed analytically by solving the equation $P(\phi_{\omega,Z}(0))=0$ in (\ref{poli}). After some calculations, we obtain that
\begin{equation}\label{cinest}
\phi^2_{\omega,Z}(0)=\frac{-3\lambda_1}{4\lambda_2}\left(1-\sqrt{1-\frac{16\lambda_2}{3\lambda_1^2}\left(\omega+\frac{Z^2}{4}\right)}\right).
\end{equation}
From (\ref{rest}) and (\ref{cinest}), after some algebraic manipulations, we can infer that
\begin{equation}\label{cineq3}
\frac{1}{2}\frac{\lambda_1}{\lambda_2}+\phi^2_{\omega, Z}(0)<0.
\end{equation}
Lastly, from (\ref{cineq1}), (\ref{cineq2}) and (\ref{cineq3}), we deduce that $(\mathcal L_{1, Z} \phi_{\omega,Z},  \phi_{\omega,Z})<0$. Hence, $n(\mathcal L_{1, Z})=1$ for $\omega, Z$ satisfying (\ref{rest}).

\indent\textbf{Acknowledgements:} J. Angulo was supported partially
by  Grant CNPq/Brazil. 
%C. Hern\'andez was supported by ??????

\renewcommand{\refname}{References}


\begin{thebibliography}{9}


\bibitem{AdaNoj15}
R. Adami, C. Cacciapuoti, D. Finco, D. Noja, {\it Stable standing waves for a NLS on star graphs as local minimizers of the constrained energy}, J. Differential Equations {\bf 260}, no.~10, 7397--7415 (2016).

\bibitem{AdaNoj14}
R. Adami, C. Cacciapuoti, D. Finco, D. Noja, \textit{Variational properties and orbital stability of standing waves for NLS equation on a star graph}, J. Differential Equations {\bf 257}, no.~10, 3738--3777 (2014).

\bibitem{AdaNoj13a} R. Adami, D. Noja,  \textit{Stability and Symmetry-Breaking Bifurcation for the Ground States of a NLS with a $\delta'$ Interaction}, Commun. Math. Phys.  {\bf 318} (1),  247--289 (2013).

\bibitem{Ada2}
R. Adami, D. Noja, N. Visciglia,  \textit{Constrained energy minimization and ground states for NLS with point defects}, Discr. Cont. Dyn. Syst.  B {\bf 18}(5), 1155 -- 1188 (2013).

\bibitem{Ag} G. Agrawal,  \textit{Nonlinear fiber optics},  Academic Press, (2001).

\bibitem{AlbBrz95} S. Albeverio, Z. Brzezniak, L. Dabrowski,  \textit{Fundamental Solution of the Heat and Schr\"odinger Equations with Point Interaction}. J. Funct. Anal.
130-(1), 220--254  (1995).

\bibitem{Albeverio} S. Albeverio, F. Gesztesy, R. Krohn, H. Holden, \textit{Solvable models in quantum mechanics}, AMS Chelsea publishing. 2004. 

\bibitem {ak} S. Albeverio, P. Kurasov, \textit{Singular perturbations of differential operators}, London Mathematical Society Lecture Note Series 271, Cambridge University Press, Cambridge, 2000.

\bibitem{A1cnoi} J. Angulo,  \textit{Instability of cnoidal-peak for the NLS-$\delta$ equation}, Math. Nachr. 285, No.13, 1572-1602, 2012.


 \bibitem{A4Herna}  J. Angulo and A. Hernandez,   \textit{Stability of standing waves for logarithmic Schr\"odinger equation with attractive delta potential}, IUMJ, 67, no.2, 471--494 (2018).

\bibitem{AG1} J. Angulo and  N. Goloshchapova, \textit{On the orbital instability of excited states for the NLS equation with the $\delta$-interaction on a star graph}, arXiv:1803.07194.

\bibitem{AG2} J. Angulo and  N. Goloshchapova, \textit{On the standing waves of the NLS-log equation with point interaction on a star graph}, arXiv:1803.07194.


\bibitem{A2G} J. Angulo and  N. Goloshchapova, \textit{Extension theory approach in stability of standing waves for NLS equation with point interactions}, arXiv:1507.02312.


\bibitem{A3G} J. Angulo and N. Goloshchapova, \textit{Stability of standing waves for NLS-log  equation with $\delta$-interaction }, NoDEA Nonlinear Differential Equations Appl. 24, no. 3, Art. 27 (2017).


\bibitem{ALN} J. Angulo,  O. Lopes and A. Neves, \textit{ Instability of travelling waves for weakly coupled KdV systems}, Nonlinear Anal. 69, no. 5-6, 1870--1887 (2008)



\bibitem{AN} J. Angulo and F. Natali, \textit{On the instability of periodic waves for dispersive equations}, Differential Integral Equations 29 (2016), no. 9-10, 837--874, (2016).




\bibitem{A5Ponce} J. Angulo and G. Ponce, 
\textit{The nonlinear Schr\"odinger equation with a periodic $\delta$-interaction}, Bull. Braz. Math. Soc., New Series 44-(3), 497-551, (2013).



\bibitem{BouCLT} G. Boudebs, S. Cherukulappurath, H. Leblond, J. Troles, F. Smektala, and F. Sanchez, \textit{Experimental and theoretical study of higher-order
nonlinearities in chalcogenide glasses}, Opt. Commun. 219 (2003), 427Ð433.

\bibitem{BK}  V. A. Brazhnyi and V. V. Konotop, \textit{Theory of nonlinear matter waves in optical lattices},  N. Akhmediev (Ed.). Dissipative Solitons. vol. 18, (2005)  627.

\bibitem{BerShu91} F.A. Berezin, M.A. Shubin, \textit{The Schr\"odinger equation}, Kluwer, Dordrecht--Boston--London, 1991.



\bibitem {CMR} V. Caudrelier,  M. Mintchev,  E. Ragoucy,
\textit{Solving the quantum non-linear Schr\"odinger equation with $\delta$-type impurity},  J. Math. Phys. {\bf 46} (4),  042703-1-24 (2005).



\bibitem{Cazenave} T. Cazenave,  \textit{Semilinear Schr\"odinger Equations}, American Mathematical Society, AMS. Lecture Notes, v. 10, 2003.

\bibitem{LeCoz} S. Le Coz, R. Fukuizumi, G. Fibich, B. Ksherim and Y. Sivan, \textit{Instability of bound states of a nonlinear Schrodinger equation with a Dirac Potential}, Phys. D, 237, (2008) 1103-1128, 237, 2008.

\bibitem{DH}  K. Datchev, J. Holmer, \textit{Fast soliton scattering by attractive delta impurities}, Comm. PDE., 34  (2009) 1074--1173.

\bibitem{DMADDKK} K. B. Davis, M. O.  Mewes, M. R. Andrews, N. J. van Druten, D. S. Durfee, D. M. Kurn and W. Ketterle, \textit{Bose-Einstein condensation in gas of sodium atoms}, Phys. Rev. Lett., 74(22) (1995), 3969--3973.


\bibitem{FilhoABL} E. L. Falc\~ao-Filho, C. B. de Ara\'ujo, G. Boudebs, H. Leblond and  V. Skarka, \textit{Robust two-dimensional spatial solitons in liquid carbon
disulfide}, Phys. Rev. Lett. 110 (2013), 013901.


 \bibitem{FilhoAR} E. L. Falc\~ao-Filho, C. B. de Ara\'ujo, J. J. Rodrigues Jr., \textit{High-order nonlinearities of aqueous colloids containing silver nanoparticles}, J. Opt. Soc. Am. B 24 (2007), 2948Ð2956.



\bibitem{FuJe} R. Fukuizumi and L. Jeanjean, \textit{Stability of standing waves for a nonlinear Schr\"odinger equation with a repulsive Dirac delta potential}, Discrete Contin. Dyn. Syst., 21 (2008),  121--136.

\bibitem{fuoht}R.  Fukuizumi,  M. Ohta  and T. Ozawa, \textit{Nonlinear Schr\"odinger equation with a point defect},  Ann. Inst. H. Poincar\'e Anal. Non Lin\'eaire, 25 (2008), 837--845. 


\bibitem{GavSch86} B. Gaveau and L.S. Schulman, \textit{Explicit time-dependent Schr\"odinger propagators}. J. Physics A: Math. Gen. 19 (10), 1833--1846 (1986).

\bibitem{GMW} F. Genoud, F. B. Malomed and R. Weish\"aupl,  \textit{Stable NLS solitons in a cubic-quintic medium with a delta-function potential}, Nonlinear Anal. 133 (2016), 28Ð50. 

\bibitem{GiDriMa} B. V. Gisin, R. Driben and B. A. Malomed, \textit{Bistable guided solitons in the cubic-quintic medium}, J. Optics B: Quantum and Semiclassical Optics 6 (2004), S259ÐS264.


\bibitem{ghw} R.H. Goodman, J. Holmes and   M. Weinstein,  \textit{Strong NLS soliton-defect interactions}. Phys. D, {\bf 192},  215--248  (2004).

\bibitem{grillakis1} M. Grillakis, J. Shatah, and W. Strauss,  \textit{Stability theory of solitary waves
in the presence of symmetry, I}, J. Funct. Anal., 160-197, 74, 1987.


\bibitem{grillakis2} M. Grillakis, J. Shatah, and W. Strauss,  \textit{Stability theory of solitary waves
in the presence of symmetry, II}, J. Funct. Anal., 308-348, 94, 1990.


\bibitem{HenPerWre} D. Henry, J. Perez and W. Wreszinski, \textit{Stability theory for solitary-wave solutions of scalar field equation,} Comm. Math. Phys. 85,  351-361 (1982).


\bibitem{HMZ1}   J. Holmer, J. Marzuola,  M. Zworski, \textit{Fast soliton scattering by delta impurities}, Comm. Math. Phys. 274 (91), 187--216 (2007).

\bibitem{KaOh}  M. Kaminaga, M. Ohta, \textit{Stability of standing waves for nonlinear
Schr\"odinger equation with attractive delta potential and repulsive nonlinearity}, Saitama Math. J. {\bf 26}, 39--48 (2009).

\bibitem{Kato} T. Kato,  \textit{Perturbation Theory for Linear Operators}, 2nd edition, Springer, 1984.	



\bibitem{LP} F. Linares and G. Ponce,  \textit{Introduction to nonlinear dispersive equations}. Second edition. Universitext. Springer, New York, 2015.



\bibitem{LMR} S. Le Coz, Y. Martel and P. Raphael, \textit{Minimal mass blow up solutions for a double power nonlinear Schr\"odinger equation}. arXiv: 1406.6002

\bibitem{Ma} M. Maeda, \textit{Stability and instability of standing waves for 1-dimensional nonlinear Schr\"odinger equation with multiple-power nonlinearity}, Kodai Math. J. 31 (2008), 263Ð271.




\bibitem{Mn}  C. R. Menyuk,  \textit{Soliton robustness in optical fibers}, J. Opt. Soc. Am. B, 10(9)  (1993), 1585--1591.

\bibitem{MN} J. Moloney and  A. Newell, \textit{Nonlinear optics}, Westview Press. Advanced Book Program, Boulder, 

\bibitem{Nai67} M.A. Naimark, \textit{Linear differential operators}, F. Ungar Pub. Co., New York, 1967.

\bibitem{ota} M. Ohta, \textit{Stability and Instability of standing waves for one dimensional nonlinear Schr\"odinger equations with double power nonlinearity}, Kodai Math. J., no. 1, 68-74, 18, 1995.


\bibitem{ota2} M. Ohta, \textit{Instability of bound states for abstract nonlinear Schršdinger equations}. J. Funct. Anal. 261, no. 1, 90--110 (2011). 




\bibitem{PKH} P. Papagiannis, Y. Kominis and  K. Hizanidis, \textit{Power-and momentum-dependent soliton dynamics in lattices with longitudinal modulation}, Phys. Rev. A 84 (2011), 013820



\bibitem{Reed} S. Reed and B. Simon,  \textit{Methods of modern mathematical Physics: Analysis of Operators}, Academic Press, Vol. IV, 1978.

\bibitem{ST}  H. Sakaguchi and M. Tamura,  \textit{Scattering and trapping of nonlinear Schr\"odinger solitons in external potentials},  J. Phys. Soc. Japan,  73, (2004),  2003.

\bibitem{SCH}  B.T. Seaman,  L. D. Car  and M. J. Holland, \textit{Effect of a potential step or impurity on the Bose-Einstein condensate mean field},  Phys. Rev. A,  71, (2005).



\end{thebibliography}
\end{document}